\let\OLDthebibliography\thebibliography
\renewcommand\thebibliography[1]{
  \OLDthebibliography{#1}
  \setlength{\parskip}{0pt}
  \setlength{\itemsep}{0pt plus 0.3ex}
}
\newtheorem{maintheorem}{Theorem}
\newtheorem{thm}{Theorem}[section]
\newtheorem{thm*}[thm]{Theorem$^\star$}
\newtheorem{lemma}[thm]{Lemma}
\newtheorem{prop}[thm]{Proposition}
\theoremstyle{definition}
\newtheorem{defn}[thm]{Definition}
\theoremstyle{remark}
\newtheorem{remark}[thm]{Remark}
\numberwithin{equation}{section}
\newcommand*\wrapletters[1]{\wr@pletters#1\@nil}
\def\wr@pletters#1#2\@nil{#1\allowbreak\if&#2&\else\wr@pletters#2\@nil\fi}
\newcommand{\dd}{\; \mathrm{d}}
\def\le{\leqslant} \def\ge{\geqslant}
\def \bN {\mathbb N}
\def \bP {\boldsymbol{P}}
\def \bR {\boldsymbol{R}}
\def \bQ {\boldsymbol{Q}}
\def \bbR {\mathbb R}
\def \bZ {\mathbb Z}
\def \bh {\mathbf h}
\def \bxi {\mathbf \xi}
\def \bx {\mathbf x}
\def \bxi {{\boldsymbol{\xi}}}
\def \cC {\mathcal C}
\def \cF {\mathcal F}
\def \cL {\mathcal L}
\def \cM {\mathcal M}
\def \cP {\mathcal P}
\def \cQ {\mathcal Q}
\def \cR {\mathcal R}
\def \cT {\mathcal T}
\def \dim {\mathrm{dim}}
\def \Bad {{\mathrm{Bad}}}
\begin{document}
\title[]{Bad approximability, bounded ratios and Diophantine exponents}
\author[A. Marnat, N. Moshchevitin, J. Schleichitz]{Antoine Marnat \\ {\small\sc Universit\'e Paris Est - Cr\'eteil } \\ \\ Nikolay Moshchevitin \\ {\small\sc
{ 
Technische Universit\"at Wien
}
 }\\ \\ Johannes Schleichitz \\ {\small\sc Middle East Technical University}   }

\subjclass[2010]{11J83}
\keywords{Diophantine approximation, geometry of numbers}
\thanks{The second named author is supported by Austrian Science Fund (FWF), Forschungsprojekt PAT1961524}

\date{}
\begin{abstract}  

For a real {$n\times m$} matrix $\pmb{\xi}$, we consider its sequence of best Diophantine approximation vectors $ \pmb{x}_i \in \mathbb{Z}^{{m}}, \, i =1,2,3, ...  $, the sequences of its norms $X_i = \|\pmb{x}_i\|$ and the norms of remainders $L_i = \|\pmb{\xi}\pmb{x}_i\|$. It is  known that, in the cases  $m=1$,  bad approximability of $\pmb{\xi}$ is equivalent to the boundedness of ratios $\frac{X_{i+1}}{X_i}$, while for $n=1$   bad approximability of $\pmb{\xi}$ is equivalent to the boundedness of ratios $ \frac{L_i}{L_{i+1}}$. Moreover,
carefully constructed example show that in the cases $m=1$ and $n=1$ boundedness of ratios $ \frac{L_i}{L_{i+1}}$ and  $\frac{X_{i+1}}{X_i}$ respectively (the order of ratios changed), does not imply bad approximability of $\pmb{\xi}$. In the present paper, we study the impact of the boundedness of ratios on Diophantine properties of  $\pmb{\xi}$, in particular, what restrictions it gives for Diophantine exponents $\omega(\pmb{\xi})$ and $\hat{\omega}(\pmb{\xi})$. One of our particular results deals with the case $m=n=2$. We prove that for $2\times 2 $  matrices  $\pmb{\xi}$  boundedness of both ratios  $ \frac{X_{i+1}}{X_i}, \frac{L_i}{L_{i+1}}  $  implies inequality  $\hat{\omega}(\pmb{\xi})\le \frac{4}{3}$ and that this result is optimal. Our methods combine parametric geometry of numbers as well as more classical tools.
\end{abstract}

\maketitle

\section{Diophantine approximation and bad approximability}\label{beginn}

Let  $m, n\ge1$ be integers. Throughout the paper, $\| \cdot \|$ denotes the sup norm, and we use the Vinogradov symbol $A \ll_k B$ and $A\gg_k B$ if there exists a positive constant $c:=c(k)$ depending on $k$ such that $A\le c B$ and $A \ge cB$ respectively.\\
We will deal with integer vectors of the form
$$\bx =  (\pmb{x},\pmb{y})^\top = (x_1, \ldots , x_m, y_1, \ldots , y_n)^\top \in \bZ^{m+n},$$
$$
  \pmb{x} = (x_1, \ldots , x_m)^\top\in \mathbb{Z}^m,\,\,\,   \pmb{y} = (y_1, \ldots , y_n)^\top\in \mathbb{Z}^n.
$$
 
We consider Diophantine approximation to the matrix $\bxi = (\xi_{i,j})_{1\le i \le n, 1\le j \le m}\in \bbR^{n \times m}$. 

Denote 
\begin{equation}\label{defL}
L_{\bxi}(\bx) := \| \pmb{\xi}\pmb{x} -\pmb{y}\| =  \max_{1\le i \le n} \|x_1\xi_{i,1} + \cdots + x_m\xi_{i,m} -y_i \|.
\end{equation}

The classical Dirichlet Theorem reads as follows.

\begin{maintheorem}[Dirichlet, 1842]
For every $X>1$, there exists integers $\bx \in \bZ^{m+n}\,\,\,$ such that
\begin{equation}\label{Dir}
\|\pmb{x}\| \le X^n, \; L_{\bxi}(\bx)\le X^{-m}.
\end{equation}
\end{maintheorem}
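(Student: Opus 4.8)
The plan is to derive this from Minkowski's first (convex body) theorem, the pigeonhole proof being sketched afterwards as the elementary alternative. Note first that the statement is only of interest for \emph{nonzero} $\bx$, and that $\pmb{y}$ is to be read as (a choice of) nearest integer vector to $\pmb{\xi}\pmb{x}$, so that the constraint genuinely at stake is $\|\pmb{x}\|\le X^{n}$ together with a small value of $\|\pmb{\xi}\pmb{x}-\pmb{y}\|$.

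Fix $X>1$ and set
$$
K=K(X)=\bigl\{(\pmb u,\pmb v)\in\bbR^{m}\times\bbR^{n}\ :\ \|\pmb u\|\le X^{n},\ \|\pmb{\xi}\pmb u-\pmb v\|\le X^{-m}\bigr\}\subset\bbR^{m+n}.
$$
First I would check the three hypotheses of Minkowski's theorem: $K$ is convex, being cut out by the finitely many linear inequalities $|u_{j}|\le X^{n}$ and $\bigl|(\pmb{\xi}\pmb u)_{i}-v_{i}\bigr|\le X^{-m}$; it is symmetric about the origin, since each such inequality has the form $|\cdot|\le(\text{const})$; and it is compact, being closed (non-strict inequalities) and bounded ($\|\pmb u\|\le X^{n}$ forces $\|\pmb v\|\le\|\pmb{\xi}\pmb u\|+X^{-m}$ to be bounded as well). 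The one computation to carry out is the volume: integrating over $\pmb v$ first, for each admissible $\pmb u$ — a set of $m$-dimensional measure $(2X^{n})^{m}$ — the admissible $\pmb v$ form a translate of the cube $[-X^{-m},X^{-m}]^{n}$, of $n$-dimensional measure $(2X^{-m})^{n}$, so
$$
\vol(K)=(2X^{n})^{m}(2X^{-m})^{n}=2^{m+n}X^{\,mn-mn}=2^{m+n}.
$$
Since $K$ is a compact, convex, origin-symmetric body in $\bbR^{m+n}$ of volume exactly $2^{m+n}$, the closed (equality) case of Minkowski's first theorem — obtained from the strict version $\vol>2^{m+n}$ by applying it to $(1+\eps)K$ and letting $\eps\to0$, using compactness — produces a nonzero lattice point $\bx=(\pmb x,\pmb y)\in\bZ^{m+n}\cap K$; by the definition of $K$ this $\bx$ satisfies $\|\pmb x\|\le X^{n}$ and $L_{\bxi}(\bx)=\|\pmb{\xi}\pmb x-\pmb y\|\le X^{-m}$, which is exactly \eqref{Dir}.

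Alternatively — and this is the source of the name "Dirichlet" — one runs the box principle directly: reduce the vectors $\pmb{\xi}\pmb x \bmod \bZ^{n}$ of the $(\lfloor X^{n}\rfloor+1)^{m}$ integer vectors $\pmb x$ with $0\le x_{j}\le X^{n}$ into $[0,1)^{n}$, partition $[0,1)^{n}$ into $\lceil X^{m}\rceil^{n}$ axis-parallel boxes of side $\le X^{-m}$, find two distinct vectors $\pmb x',\pmb x''$ with images in a common box, and take $\pmb x=\pmb x'-\pmb x''\neq 0$ with $\pmb y\in\bZ^{n}$ the corresponding integer vector, so that $\|\pmb x\|\le X^{n}$ and $\|\pmb{\xi}\pmb x-\pmb y\|\le X^{-m}$. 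The only nuisance here is the bookkeeping needed to guarantee strictly more vectors than boxes — equivalently, a clean passage from an integer parameter to an arbitrary real $X>1$ (one absorbs the image of $\pmb x=\pmb0$, or enlarges the admissible set slightly). The virtue of the Minkowski route is precisely that the sharp closed form $\vol(K)\ge 2^{m+n}$ sidesteps this, so that in that argument the only non-routine ingredient is invoking the equality case of Minkowski's theorem, the volume identity above being immediate.
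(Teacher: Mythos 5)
The paper does not give a proof of this statement: it is cited as the classical Dirichlet theorem of 1842 and used as a black box, so there is no argument in the text to compare against. Your proof is correct and standard. The Minkowski route is clean: the convex body $K$ you define is indeed origin-symmetric, compact, and of volume exactly $2^{m+n}$ (the determinant-free computation $\vol(K)=(2X^{n})^{m}(2X^{-m})^{n}=2^{m+n}$ is right), and the compact/closed form of Minkowski's first theorem, obtained by applying the open version to $(1+\eps)K$ and extracting a convergent subsequence, yields the nonzero lattice point. Your pigeonhole sketch is also the genuine Dirichlet argument and, as you note, only needs minor bookkeeping to handle non-integer $X$. One small point worth flagging for precision: the paper's phrasing ``$\|\bx\|\le X^{n}$'' with $\bx=(\pmb x,\pmb y)\in\bZ^{m+n}$ literally constrains the full $(m+n)$-vector, while your $K$ only constrains the $\pmb x$-block; this is harmless because with $L_{\bxi}(\bx)\le X^{-m}<1$ the $\pmb y$-block is automatically bounded in terms of $\pmb x$ and $\bxi$, and throughout the rest of the paper (e.g.\ the definition of $\cC_{\bxi}(q)$ and the derivation of \eqref{defexp}) the relevant norm is that of the $\pmb x$-block; so the intended reading matches your proof, but it would be worth a sentence saying so.
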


  \vskip+0.3cm
 We will consider only \emph{totally irrational} $\bxi$, that is  $L_{\bxi}(\bx)>0$ for every $\bx \in \bZ^{m+n}\setminus \{\boldsymbol{0}\}$
 and the subspace 
 $$\{ \bx \in \bbR^{m+n} \mid L_\bxi (x) = 0 \}$$
  is not contained in any rational subspace $Q \subset \bbR^{n+m}$ of dimension $\dim(Q) < n+m$. 
 
 \vskip+0.3cm
 
 In particular, Dirichlet implies that  the inequality $$\|\pmb{x}\|^m L_{\bxi}(\bx)^n \le 1$$ holds for infinitely many integer $ \bx\in \mathbb{Z}^{m+n}$. A matrix  $\bxi$ is called  \emph{badly approximable} if  there exists a positive constant $c$ such that  the inequality $$\|\pmb{x}\|^m L_{\bxi}(\bx)^n \ge c$$ holds for all non-zero integer vectors $\bx\in \mathbb{Z}^{m+n}$. We denote by $\Bad_{m,n}$ the set of badly approximable $\bxi \in \bbR^{n \times m}$.
 For  more discussion on this notion of bad approximability and further references we refer the reader to  the  classical book \cite{SSS} as well as to a recent paper
\cite{Yorkies}. 

In this paper, we focus on the characterization of badly approximable matrices in terms of sequences of \emph{best approximations}. Best approximation vectors were considered by many authors. We refer to a survey paper \cite{Che} and a recent paper \cite{Mo2} where all the aspects of the definition are discussed in details. 
Let  $(\bx_i)_{i\in \bN} \in \bZ^{m+n} $  be the  sequence of {best approximation}  vectors to $\bxi$. If we set $X_i = \|\pmb{x}_i\|$ and $L_i = L_{\bxi}(\bx_i)$, then
\begin{equation}\label{Xsec}
1 = X_1< X_2< \cdots<X_\nu < \cdots \quad \text{ and }\quad L_1 > L_2 > \cdots > L_\nu>\cdots
 \end{equation}
are strictly monotone. By definition, for all $\|\pmb{x}\| < X_i$, we have $L_{\bxi}(\bx) \ge L_{i-1}$. 

 {As we only consider totally irrational $\bxi$, this sequences are infinite.}  Depending on dimensions $m$ and $n$ and the matrix $\bxi$, the sequence $(\bx_i)_{i\in \bN}$ may not be unique, but the sequences $(X_i)_{i\in \bN}$ and $(L_i)_{i\in \bN}$ are always defined uniquely (see discussion in \cite{Mo2}). To analyse the quality of Diophantine approximation, we may consider for a real parameter $w$ and every $X$ the system of inequalities
\begin{equation}\label{DefOmega}
 \begin{cases}
L_{\bxi}(\bx) < X^{-w}\cr
  0< \|\pmb{x}\| \le X
\end{cases}  .
\end{equation}
 The ordinary exponent of Diophantine approximation $\omega(\bxi)$ is defined to be the supremum over  all  $w$ such that \eqref{DefOmega} has solutions for arbitrarily large $X$, and the uniform exponent of Diophantine approximation $\hat{\omega}(\bxi)$  is defined as the supremum of all $w$ such that \eqref{DefOmega} has solutions for all sufficiently large $X$. Note that Dirichlet's Theorem ensures that $\frac{m}{n} \le \hat{\omega}(\bxi) \le  \omega(\bxi)$. In terms of the sequence of best approximation, the ordinary and uniform exponents of Diophantine approximation can be given by equalities
\begin{equation}\label{defexp}
 \omega(\bxi) =   \limsup_{i \to \infty} \frac{- \log L_i}{\log X_{i}} ,\; \quad \hat{\omega}(\bxi) =  \liminf_{i \to \infty} \frac{- \log L_i}{\log X_{i+1}}.
\end{equation}
Finally, a matrix  $\bxi$ is called  \emph{singular} if $$\limsup_{i\to\infty}X_{i+1}^m L_i^n = 0.$$ 
When $\hat{\omega}(\bxi) > \frac{m}{n}$, then $\bxi$ is clearly singular. In the latter case $\bxi$ is called \emph{supersingular}. Various aspects of singularity are discussed for example in \cite{Mo1}. Many classical results related to the topic can be found in books by Cassels \cite{CCC} and Schmidt \cite{SSS}.

In the next section, we study how the notions of bad approximability and singularity relate by ratios of successive best approximations.

\section{Ratios for successive best approximations: main results } 

Consider the following three Diophantine properties of a  {totally irrational} matrix $\bxi\in \bbR^{n\times m}$.

 \medskip
\begin{enumerate}[(A)]
\item{} $\bxi\in \bbR^{n\times m}$ is badly approximable.

\vskip+0.3cm
 
\item{} the ratio $X_{i+1} / X_i$ is bounded, that is $\sup_{i\in \bN} \frac{X_{i+1}}{X_i}< \infty$.
 
 \vskip+0.3cm
\item{} the ratio $L_i / L_{i+1}$ is bounded, that is $\sup_{i\in \bN} \frac{L_{i}}{L_{i+1}}< \infty$.
\end{enumerate}

\medskip

In \cite{AM}, R. Akhunzhanov and N. Moshchevitin ask how these conditions relate to each other. When the dimension is $m=n=1$, the theory of continued fractions shows that these three conditions are equivalent. For simultaneous approximation ($m=1$), Akhunzhanov and Moshchevitin \cite{AM} showed that (A) and (B) are equivalent and imply (C), but that for $n=2$, (C) does not imply (A) or (B) in return. W. M. Schmidt \cite{S} generalizes this fact to any $n\ge 2$. As for dual results ($n=1$),  Akhunzhanov and Moshchevitin \cite{AM} proved that for $n=1$ that (A) and (C) are equivalent and imply (B). Dual ($n=1$) existence results that (B) does not imply (A) were just announced in \cite{AM}.  Further results on this problem were obtained by L. Summerer \cite{L} studying admissible uniform exponents, see Section \ref{sss4}. The aim of the present paper is to study possible  generalizations of these observations to arbitrary $m$ and $n$.\\

\subsection{Outline of the rest of the section}
In the present section, we formulate all the results related to  properties (A), (B) and (C) and their quantitative versions (with respect to ordinary or uniform exponents of Diophantine approximation) as theorems. We divide Section 2 into five subsections.\\

First we explain in Subsection ~\ref{sss01} that the notions and properties related to the sequence of best approximations under consideration do not depend on the choice of the norm. 
In {Subsection} \ref{sss1} we will formulate all qualitative results about implications. In Theorem \ref{Thm-} we formulate well-known  implications from \cite{AM}. Theorem \ref{nm3}  gathers results which show that there is no converse implication and claims the existence of corresponding examples.
In {Subsection}  \ref{sss2},  in the low-dimensional case $m+n=3$, we give new sufficient conditions for $\pmb{\xi}$ to be non-singular.
In {Subsection}  \ref{sss3} we formulate quantitative results about possible values of the ordinary Diophantine exponent $\omega=\omega(\pmb{\xi})$ for matrices $\pmb{\xi}$ satisfying properties (B) and/or (C). 
In {Subsection}  \ref{sss4} we deal with possible values of the uniform Diophantine exponent $\hat{\omega} =\hat{\omega}(\pmb{\xi})$ for matrices $\pmb{\xi}$ satisfying properties (B) and/or (C). 
In  {Subsection} \ref{OpenProblems} we gather the relevant open questions which remain unsolved.
At last, in {Subsection} \ref{sss5}  
we give  the plan of the rest of the paper and explain for every formulated result in what part of the paper it is proven.\\

We briefly highlight the novelty of results  of theorems from our paper, for the reader's convenience.
\textbf{Theorem} \ref{normindep} is easy and rather technical, but it is likely that it has been never documented before.
The results of \textbf{Theorem} \ref{Thm+} are not new. They are taken from  the paper \cite{AM} by Akhunzhanov and Moshchevitin.
However we give an alternative proof completely in terms of parametric geometry of numbers, and this  was not done earlier 
for all the  implications from this theorem. 
All the statements of \textbf{Theorem}$^\star$~\ref{Thm-} for $m\ge 2, n\ge 2$ are new.
Cases $n=1$ and $m=1$  are not new and were known due to Akhunzhanov and Moshchevitin \cite{AM} and Schmidt \cite{S}, however the proof for  
the case 
$n=1$ has never been documented before.
The result of \textbf{Theorem} \ref{nm3} was not observed before.
All the results of \textbf{Theorem}$^\star$\ref{ThmOrd} are completely new.
The results of \textbf{Theorem} \ref{OptiOrdi} are rather simple but new.
The results of \textbf{Theorem}$^\star$~\ref{Main} for $n\ge 2$ are new. The case $n=1$ was obtained earlier by Summerer \cite{L}.
\textbf{Theorem}~\ref{optimn} is completely new.

\vskip+0.3cm
{
Here we should make one important note. Existence \textbf{Theorems}$^\star$~\ref{Thm-}, \ref{ThmOrd} and \ref{Main} of matrices with certain Diophantine properties rely on the \emph{variational principle in parametric geometry of numbers} \cite{David}. The proof of this variational principle uses a special dynamical process and specific versions of Schmidt's games. In fact, it does not provide explicit constructions. Among specialists in Diophantine Approximation, the general opinion is that the proof is difficult to understand (see \cite{Wgene}, page 390). Actually, this approach builds bridges between Diophantine and dynamical worlds.  In this manuscript, we emphasise the Theorems depending on the variational principle in parametric geometry of numbers with the symbol $\star$. Our proofs use the variational principle as a black box.}

\subsection{Independence of the norm} \label{sss01} 
Before formulating our main result, we  give a comment concerning properties  (A), (B)  and (C). In the previous section, we define the notion of bad approximability and the sequences  of best approximations \eqref{Xsec} with respect to the sup-norm $\|\cdot \|$. However for these definitions, one can use an arbitrary pair of norms $\|\cdot\|_1$ in $\mathbb{R}^m$ and
$\|\cdot\|_2$ in $\mathbb{R}^n$. This leads to new sequences of best approximations
\begin{equation}\label{Xsec1}
1 = X_1' = \| \pmb{x}_1\|_1< X_2'< \cdots< X_\nu' < \cdots \,\,\,\,\, \text{ and } \,\,\,\,\,  L_1' =\|\pmb{\xi} \pmb{x}_1 - \pmb{y}_1\|_2> L_2' > \cdots >L_\nu'> \cdots
 \end{equation}
which depend on norms $\|\cdot\|_1, \|\cdot\|_2$ and may differ from (\ref{Xsec}). Sequences (\ref{Xsec1}) are also defined uniquely.
 On the other hand, by equivalence of the norms in $\bbR^d$, a matrix $\pmb{\xi}$ is badly approximable if and only if for any pair of norms
$\|\cdot\|_1, \|\cdot\|_2$  the inequality
 $$\|\pmb{x}\|_1^m \,  \| \pmb{\xi}\pmb{x} -\pmb{y}\|_2^n \ge c'$$ 
 holds for all non-zero integer vectors $ 
 \bx\in \mathbb{Z}^{m+n}$ with some positive constant  $c'$ depending on the pair of norms under consideration. In this sense, the definition of bad approximability does not depend on the choice of the norms $\|\cdot\|_1, \|\cdot\|_2$. It turns out that the same situation happens with the ratios
$X_{i+1} / X_i$,
 $L_i / L_{i+1}$ 
from the properties (B) and (C). This fact can be documented as the following statement.

\begin{thm}\label{normindep}
Consider a {totally irrational} matrix $\bxi\in \bbR^{n\times m}$
and an arbitrary pair of norms
$\|\cdot\|_1$ in $\mathbb{R}^m$ and
$\|\cdot\|_2$ in $\mathbb{R}^n$.
Then 
\begin{itemize}
\item  the ratio  $X_{i+1} / X_i$ is bounded if and only if the ratio $X_{i+1}' / X_i'$ is bounded,\\
\item  the ratio   $L_i / L_{i+1}$  is bounded if and only if the ratio  $L_i' / L_{i+1}'$  is bounded.
\end{itemize}
\end{thm}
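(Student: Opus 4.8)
The plan is to establish the two equivalences simultaneously. They are dual to each other: the second follows from the first by interchanging the roles of the sizes $X_i$ and the reciprocal remainders $1/L_i$, of the functionals $\bz\mapsto\|\pmb x\|$ and $\bz\mapsto\|\pmb\xi\pmb x-\pmb y\|$, and of the two pairs of norms. Moreover, the passage from the sup-norm pair to $(\|\cdot\|_1,\|\cdot\|_2)$ is symmetric in the two pairs, so it suffices to prove a single implication, say: if $\sup_i X_{i+1}/X_i<\infty$ then $\sup_k X_{k+1}'/X_k'<\infty$. Since for any bound only finitely many $k$ have $X_k'$ below it, I only need to bound $X_{k+1}'/X_k'$ for large $k$. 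First I would fix, by equivalence of norms, a constant $\gamma=\gamma(\|\cdot\|_1,\|\cdot\|_2)\ge1$ so that, writing $H(\bz)=\|\pmb x\|$, $M(\bz)=\|\pmb\xi\pmb x-\pmb y\|$ for the sup-norm data and $H'(\bz)=\|\pmb x\|_1$, $M'(\bz)=\|\pmb\xi\pmb x-\pmb y\|_2$, one has $\gamma^{-1}H\le H'\le\gamma H$ and $\gamma^{-1}M\le M'\le\gamma M$ on all of $\bbR^{m+n}$ (the difference between $\|\bx\|$ and $\|\pmb x\|$ is absorbed here, since for best approximations $\|\bx_i\|\asymp_{\bxi}\|\pmb x_i\|$). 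The basic mechanism is that a best approximation for one pair of norms is \emph{almost} a record for the other: for a sup-norm best approximation $\bx_j$ one has $H'(\bx_j)\le\gamma X_j$ and $M'(\bx_j)\le\gamma L_j$, and symmetrically for a primed best approximation against the sup-norm data.

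Now fix a large $k$. Let $i$ be the sup-norm best-approximation index with $X_i\le H(\bx_k')<X_{i+1}$. By the defining property of best approximations, $M(\bx_k')\ge L_i$, hence $L_i\le M(\bx_k')\le\gamma M'(\bx_k')=\gamma L_k'$; also $X_i\le H(\bx_k')\le\gamma H'(\bx_k')=\gamma X_k'$. Suppose there is a constant $\kappa_0=\kappa_0(m,n,\gamma)$ such that $L_{i+\kappa_0}<\gamma^{-2}L_i$ for every $i$. Then the sup-norm best approximation $\bx_{i+\kappa_0}$ satisfies $M'(\bx_{i+\kappa_0})\le\gamma M(\bx_{i+\kappa_0})=\gamma L_{i+\kappa_0}<\gamma^{-1}L_i\le L_k'$, so $\bx_{i+\kappa_0}$ is an integer vector beating the primed record $L_k'$; hence $X_{k+1}'\le H'(\bx_{i+\kappa_0})\le\gamma X_{i+\kappa_0}\le\gamma C^{\kappa_0}X_i\le\gamma^2C^{\kappa_0}X_k'$, where $C=\sup_i X_{i+1}/X_i$ was used $\kappa_0$ times. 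This is the required bound, with $C'=\gamma^2C^{\kappa_0}$. The whole argument therefore reduces to the existence of such a uniform $\kappa_0$, i.e. to the assertion that the remainders of best approximations decay by at least a fixed factor over a bounded number of steps — equivalently, that the number of best approximations of a totally irrational $\bxi$ whose remainder lies in any fixed multiplicative interval $[\mu,\lambda\mu]$ is bounded in terms of $m$, $n$, $\lambda$ (and $\bxi$) only.

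This uniform well-spacedness of the remainders is the step I expect to be the real work. It is a statement in the geometry of numbers: several consecutive best approximations all lie in the thin symmetric convex ``Dirichlet box'' $\{H(\bz)\le X,\ M(\bz)\le L_i\}$, and one wants to bound how many lattice points such a box can contain; Minkowski's theorems together with a direct lattice-point count in the box handle this when $\bxi$ behaves ``generically'', and the delicate point is the possible concentration of best approximations inside a well-approximable rational subspace of $\bbR^{m+n}$, which I would treat by induction on $m+n$ — the base case $m=n=1$ being the classical inequality $X_{i+2}=q_{i+2}>2q_i$ for continued fractions, which also gives $L_{i+2}<\tfrac12 L_i$ — or by invoking the corresponding structural facts from parametric geometry of numbers. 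Feeding the same well-spacedness statement, now with the sizes $X_i$ in the role of $1/L_i$ (consecutive best approximations lie in $\{H(\bz)\le X_{i+1},\ M(\bz)\le\Lambda\}$), into the dual of the argument above yields the boundedness of $L_k'/L_{k+1}'$ from that of $L_i/L_{i+1}$, completing the proof.
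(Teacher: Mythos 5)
Your argument is correct and lands in the same place as the paper, but it is packaged quite differently. The paper's proof is phrased entirely in parametric geometry of numbers: it observes that the gauge functions attached to the two convex bodies $\cC_{\bxi}(q)$ and $\cC'_{\bxi}(q)$ differ by $O(1)$ (this is the same norm-equivalence input you use), hence so do the first-minimum functions $h_{\bxi,1}$ and $h_{\bxi,1}'$, and then applies the general comparison Lemma~\ref{KeyLemma} for piecewise-linear functions at bounded distance, whose hypotheses (Conditions V$_1$, V$_2$) are supplied by Remark~\ref{rema11}. You instead argue directly with the best-approximation sequences: the interlacing of $H(\bx_k')$ in the $X_i$, the implication $M'(\bx_{i+\kappa_0})<L_k'\Rightarrow H'(\bx_{i+\kappa_0})\ge X_{k+1}'$ from the defining property of primed best approximations, and the chain $X_{k+1}'\le\gamma X_{i+\kappa_0}\le\gamma C^{\kappa_0}X_i\le\gamma^2C^{\kappa_0}X_k'$ are all sound. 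Both routes ultimately rest on the same input — the exponential growth of the $X_i$ and decay of the $L_i$ in a bounded number of steps — but your version is more elementary and self-contained, while the paper's is shorter once Lemma~\ref{KeyLemma} is in place and handles both bullets in one stroke.

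The one spot where you hedge — the existence of a uniform $\kappa_0$ with $L_{i+\kappa_0}<\gamma^{-2}L_i$ for all $i$ — is a known elementary fact and does not require the Minkowski lattice-point count, the induction on $m+n$, or the worry about concentration in rational subspaces that you sketch. By the Bugeaud--Laurent lemma (cited in the paper as \cite{MMJ}; see also \cite{Che} and Remark~\ref{rema11}) there are fixed integers $k_1,k_2$ depending only on $m,n$ with $X_{i+k_1}\ge 2X_i$ and $L_{i+k_2}\le L_i/2$ for every $i$, and iterating gives any prescribed multiplicative gap over a bounded number of steps; simply cite this. One further caution: your ``duality'' between the two bullets should be understood as a structural parallel in the proof (the second bullet uses the growth of the $X_i$ in exactly the place where the first uses the decay of the $L_i$), not a formal duality of the Diophantine system, so it is safer to run the second argument explicitly rather than declare it a corollary of the first.
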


So, the three properties (A), (B) and (C) are independent from the choice of the two norms.
Similarly, the values of Diophantine exponents  ${\omega}(\bxi)$ and $\hat{\omega}(\bxi)$ do not depend on the choice of  norms.

\subsection{ Qualitative results}\label{sss1}

This first theorem here recalls known results from \cite{AM} related to implications that hold between the properties (A), (B) and (C).
\begin{thm}\label{Thm+} Consider a { totally irrational} matrix $\bxi \in \bbR^{n \times m}$.
\begin{enumerate}[\rm1.]
\item For any $m,n\ge 1$, (A) implies (B) and (C).\\
\item For $m=1$, (B) implies (A) and (C). In particular, (A) and (B) are equivalent.\\
\item For $n=1$, (C) implies (A) and (B). In particular, (A) and (C) are equivalent.
\end{enumerate}
\end{thm}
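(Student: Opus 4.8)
The plan is to prove all three parts using the sequence of best approximations together with Dirichlet's Theorem and the "convexity" constraint built into the definition of best approximations, namely that no integer vector of norm $< X_i$ has $L_{\bxi}$-value below $L_{i-1}$. I would set up the following two elementary inequalities, valid for any totally irrational $\bxi$ and any $i$: first, a lower bound coming from the definition of badly approximable, $X_i^m L_i^n \ge c$ for some $c>0$ when $\bxi \in \Bad_{m,n}$; and second, an upper bound coming from Dirichlet applied at the right scale, which yields $X_{i+1}^m L_i^n \ll 1$ (by taking $X$ in Dirichlet so that $X^{-m}$ is just below $L_i$, the resulting vector has norm at most $X^n$ and $L_{\bxi}$-value $< L_i$, hence its norm is $\ge X_{i+1}$ since it is a "new" approximation; comparing the two exponents gives the bound). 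These are the workhorses.

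For Part 1, assume (A), i.e. $X_i^m L_i^n \ge c$ for all $i$. Combining with $X_{i+1}^m L_i^n \ll 1$ gives $X_{i+1}^m \ll c^{-1} X_i^m$, hence $X_{i+1}/X_i \ll 1$, which is (B). Symmetrically, from $X_{i+1}^m L_{i+1}^n \ge c$ and the shifted Dirichlet bound $X_{i+1}^m L_i^n \ll 1$ we get $L_i^n \ll c^{-1} L_{i+1}^n$, i.e. $L_i/L_{i+1}\ll 1$, which is (C). This is the easy direction and works for all $m,n\ge 1$.

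For Part 2 ($m=1$), assume (B): $X_{i+1}\le K X_i$ for all $i$. I want to deduce (A), i.e. $X_i L_i^n \gg 1$. The point is that when $m=1$ the vectors $\bx$ are indexed by a one-dimensional "denominator" $x_1$ (after clearing the $\by$-part), so consecutive best-approximation denominators that are within a bounded ratio leave no room for a much better approximation in between. Concretely, fix $i$ and apply Dirichlet at the scale $X$ with $X^n$ chosen just above $X_i$, say $X^n \asymp X_{i+1}\le KX_i$: this produces an integer vector of norm $\le X^n$ and $L_{\bxi}$-value $\le X^{-1}$. Its norm is either $\ge X_{i}$, in which case it is (up to the $\by$-coordinates, which are determined) essentially $\bx_i$ or $\bx_{i+1}$, forcing $L_i \le X^{-1} \asymp X_i^{-1/n}$ wait — I need to be careful about direction; the correct bookkeeping is that $X_i^{1/n}$ and $L_i$ are linked by $X_i L_i^n \asymp 1$ because a gap $X_i L_i^n \to 0$ along a subsequence would, via Dirichlet run at an intermediate scale, manufacture a best approximation with denominator strictly between $X_i$ and $X_{i+1}$ but bounded ratio to neither — contradicting (B). I would make this rigorous by the standard argument: if $X_iL_i^n$ were not bounded below, pick $i$ with $X_iL_i^n$ tiny, run Dirichlet with $X^{-1}$ slightly below $L_{i-1}$, get a new best approximation $\bx$ with $X_i \le \|\bx\| \le X^n$ and $L_{\bxi}(\bx)<L_{i-1}$, so $\|\bx\|\in\{X_i,\dots\}$; chasing the inequalities shows $X^n \ll X_i \cdot (X_iL_i^n)^{-\text{something}}$, i.e. the ratio $X_{i+1}/X_i$ (or some $X_{i+k}/X_i$) blows up, contradicting (B). Then (A) $\Rightarrow$ (C) by Part 1.

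For Part 3 ($n=1$), the argument is dual: now it is the remainders $L_i$ that live in a one-dimensional space (the values $\xi x - y$ with $y\in\bZ$), so (C), boundedness of $L_i/L_{i+1}$, plays the role (B) played above. Assume $L_i \le K L_{i+1}$. If $X_i^m L_i$ were not bounded below along some subsequence, run Dirichlet with $X^n$ slightly above $X_i$ to produce a new approximation with $L_{\bxi}$-value $\le X^{-m}$ and norm $\le X^n$; since its $L$-value is below $L_i$ it is a genuinely new best approximation, so $L_{\bxi}(\bx) \ge L_{i+k}$ for the appropriate $k$, and comparing $X^{-m}$ with $L_i$ shows $L_i/L_{i+1}$ (or $L_i/L_{i+k}$) must be large — contradicting (C). Hence (A), and then (B) follows from Part 1. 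The main obstacle in both Parts 2 and 3 is purely bookkeeping: making precise the claim that "a new approximation produced by Dirichlet at an intermediate scale forces an unbounded ratio," i.e. carefully tracking which index $X_{i+k}$ or $L_{i+k}$ the new vector sits at and verifying the exponent arithmetic; the one-dimensionality of the denominator (resp. remainder) space is what guarantees there is essentially no "hidden" vector to spoil the count, and this is exactly where the hypotheses $m=1$ and $n=1$ are used. Since the theorem only recalls known results from \cite{AM}, I would keep this proof brief and cite \cite{AM} for the details if the bookkeeping becomes lengthy.
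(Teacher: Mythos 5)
Your proof of Part 1 is correct and works; it is a classical version of what the paper does. Both the paper and you exploit that the Dirichlet-type upper bound $X_{i+1}^m L_i^n\ll 1$ holds for consecutive best approximations, and that (A) gives a matching lower bound $X_i^m L_i^n\ge c$; dividing the two gives boundedness of each ratio. (The paper phrases this as: if $|h_{\bxi,1}|\le\ell$ then no piece of slope $s$ can have length exceeding $2\ell/|s|$, which is the same estimate read off a picture.)

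Parts 2 and 3, however, contain a genuine gap that is not just ``bookkeeping.'' Your sketch tries to produce a contradiction from ``$X_i L_i^n$ tiny'' by running Dirichlet at the scale $X^{-1}\approx L_{i-1}$ and claiming the resulting vector forces $X_{i+k}/X_i$ to blow up. But Dirichlet only ever supplies the \emph{upper} bound $X_{i+1}^m L_i^n\ll 1$; it gives nothing in the other direction. What you would need for your chain to close is a lower bound of the form $X_{i+1}^m L_i^n\gg 1$, and such a bound simply does not hold in general --- its failure is exactly the definition of singularity, and singular matrices exist for every $(m,n)$. When you write ``chasing the inequalities shows $X^n\ll X_i(X_iL_i^n)^{-\mathrm{something}}$'' the inequalities in fact go the wrong way: setting $X^{-1}$ just below $L_{i-1}$ gives $X^n>L_{i-1}^{-n}$, but $L_{i-1}>L_i$ makes $L_{i-1}^{-n}$ \emph{smaller} than $L_i^{-n}$, so no useful lower bound on $X^n$ in terms of $X_i$ and the small quantity $X_iL_i^n$ comes out. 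The actual content of Parts 2 and 3 --- that for $m=1$ condition (B), or for $n=1$ condition (C), rules out singularity --- needs extra structural input. In the paper's argument this input is the convexity constraint (iv) on the last component $P_{m+n}$ of the successive-minima tuple (equivalently, Mahler duality / Minkowski's second theorem applied to the whole tuple), which bounds how long the first minimum can keep dropping once the growth of the top minimum is controlled; classically, the corresponding step in \cite{AM} uses linear independence of several consecutive best approximation vectors and determinant estimates rather than iterated Dirichlet. As written, your sketch does not have an ingredient playing this role, so Parts 2 and 3 are not established; citing \cite{AM} is an honest fallback, but the intermediate reasoning you give is not a correct outline of that proof.
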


This theorem  was originally proved using classical methods of geometry of numbers.  In fact, Statement 1. is obvious. 
We present a complete alternative proof of Statements 2. and 3. in the context of \emph{parametric geometry of numbers}. This context proves to be very adapted to our matter: for example in Section \ref{beau} we provide an existence result using a very simple and elegant multigraph.

Our second theorem states that implications which are not mentioned in Theorem \ref{Thm+} do not hold.

\begin{thm*}\label{Thm-} \hfill
\begin{enumerate}[\rm1.]
\item For $m,n\ge 2$, there exists { totally irrational} $\bxi \in \bbR^{n \times m}$ such that
\begin{equation}\label{thm-1}
\bxi \textrm{ has properties (B) and (C) but not (A)}.
\end{equation}
\item For $m\ge1, n\ge2$, there exists {totally irrational} $\bxi \in \bbR^{n \times m}$ such that
\begin{equation}\label{thm-2}
\bxi \textrm{ has property (C) but neither (A) nor (B)}.
\end{equation}
\item For $m\ge2, n\ge1$, there exists { totally irrational} $\bxi \in  \bbR^{n \times m}$ such that
\begin{equation}\label{thm-3}
\bxi \textrm{ has property (B) but neither (A) nor (C)}.
\end{equation}

\item Furthermore, the sets of { totally irrational} $\bxi \in \bbR^{n \times m}$ satisfying respectively \eqref{thm-1}, \eqref{thm-2} and \eqref{thm-3} each have full Hausdorff dimension $mn$.
\end{enumerate}
\end{thm*}

For $m=1$, case $n=2$ is done in \cite{AM} and the case $n\ge 2$ is done in \cite{S} using parametric geometry of numbers. Case $n=1$  was  only announced  both in \cite{AM} ($m=2$) and \cite{S} ($m\ge2$). When both  $m$ and $n$ are at least $2$ the results are new, as well as the result on Hausdorff  dimensions.\\

 Before we state more precise results on these (counter)examples, studying their admissible ordinary and uniform exponents,  we should observe that this means  that conditions (B) and (C) are not relevant to characterize badly approximable matrices when $\min(m,n)>1$.\\

\subsection{$m+n=3$: (non)singularity}\label{sss2}

In the special case $m+n=3$, we can observe the following  properties regarding singularity.

\begin{thm}\label{nm3} Consider a {totally irrational} $\bxi\in\bbR^{n \times m}$ with $m+n=3$.
\begin{enumerate}[\rm1.]
\item If $n=2$ and $m=1$
and property (C) holds, then $\bxi\in\bbR^{{ 2\times 1}}$ is not singular.
\vskip+0.3cm
\item If $n=1$ and $m=2$ 
and property (B) holds,
then $\bxi\in\bbR^{{ 1\times2}}$ is not singular.
\end{enumerate}
\end{thm}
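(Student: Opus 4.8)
The plan is to prove both statements by a direct transfer between the primal and dual points of view, exploiting the three-dimensional structure where best approximations are governed by $2$-dimensional lattices. Since $m+n=3$, by duality (transposing $\bxi$), Statement 2 is essentially the transpose of Statement 1, so it suffices to give the argument for $m=1$, $n=2$ carefully and then indicate that the dual case follows from swapping the roles of $X_i$ and $L_i$ together with property (B) in place of property (C). Recall that for $m=1$, $n=2$ being singular means $\limsup_{i\to\infty} X_{i+1} L_i^2 = 0$, i.e. $X_{i+1}L_i^2 \to 0$.

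First I would set up the elementary inequalities relating consecutive best approximations. Since $\bx_{i}$ and $\bx_{i+1}$ are independent integer vectors, and $\bx_{i+1}$ is not a best approximation at level $X_i$, the standard $3$-dimensional packing/convex-body argument (a pigeonhole on the sublattice spanned by $\bx_i, \bx_{i+1}$, or equivalently a volume comparison using Minkowski's theorem) yields a lower bound of the shape
\begin{equation}\label{eq:plan1}
X_{i+1} L_i^2 \gg 1 \quad\text{or more precisely}\quad X_{i+1} L_{i-1} L_i \gg 1,
\end{equation}
which is just the contrapositive of the Dirichlet-type optimality of the best approximation sequence. Then, assuming property (C), i.e. $L_{i}/L_{i+1} \le C$ for all $i$, we get $L_{i-1} \le C L_i$, hence from the second form of \eqref{eq:plan1} we obtain $X_{i+1} L_i^2 \gg C^{-1} \gg 1$. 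This shows $\limsup X_{i+1} L_i^2 > 0$, so $\bxi$ is not singular. The main point to get right is the precise three-term inequality $X_{i+1}L_{i-1}L_i \gg 1$: this comes from the fact that the box $\{\|\bx\| \le X_{i+1} - 1,\ L_{\bxi}(\bx) < L_{i-1}\}$ contains no nonzero lattice point other than (multiples of) $\bx_i$ inside a suitable slab, combined with a successive-minima estimate in the plane spanned by $\bx_i$ and $\bx_{i+1}$; alternatively one invokes that the two-dimensional lattice $\Lambda_i = \langle \bx_i, \bx_{i+1}\rangle$ has covolume bounded below by a constant while its first minimum in the relevant quasi-norm is comparable to a function of $X_i, L_i$.

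The hard part will be making the geometric inequality \eqref{eq:plan1} sharp enough and with the correct indices, since a naive application only gives $X_{i+1}L_i L_{i+1} \gg 1$ (which, combined with (C), also suffices but requires the bounded-ratio hypothesis at the adjacent index — still fine) whereas one wants to be economical. I would phrase the key lemma as: for every $i$, one has $X_{i+1} \cdot L_{i} \cdot L_{i-1} \gg 1$, proved by noting that any nonzero $\bx$ with $\|\bx\| < X_{i+1}$ satisfies $L_{\bxi}(\bx) \ge L_i$, so the symmetric convex body $\{\|\bx\|\le X_{i+1},\ L_{\bxi}(\bx)\le L_{i}\}$ (which has volume $\asymp X_{i+1}L_i^2$ in the $m=1,n=2$ normalization) meets the lattice $\bZ^3$ only along the line $\bR\bx_i$; hence its intersection with the $2$-plane complementary to $\bx_i$ is lattice-point-free, forcing $X_{i+1}L_i^2 \cdot (\text{something}) \gg 1$, and a second best approximation vector of controlled size refines this to the three-term bound. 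Once the lemma is in place, both statements of Theorem \ref{nm3} follow in one line each by feeding in (C) respectively (B) to collapse the product of two remainder (respectively norm) terms. I would then remark that this is exactly the three-dimensional coincidence; in higher dimension $m+n\ge 4$ the analogous argument breaks because the relevant lattice is no longer a plane and bounded ratios no longer force a uniform lower bound on $X_{i+1}L_i^n$.
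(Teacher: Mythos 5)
Your proposed key inequality $X_{i+1}L_{i-1}L_i \gg 1$ (respectively $X_{i+1}L_iL_{i+1}\gg 1$) is false in general: it would follow from a determinant bound only when $\bx_{i-1},\bx_i,\bx_{i+1}$ are linearly independent, but for $m=1$, $n=2$ the best approximation vectors routinely remain inside a fixed two-dimensional subspace $\pi$ for arbitrarily long stretches. On such a stretch the relevant $3\times 3$ determinant vanishes, and inside $\pi$ the behaviour is that of one-dimensional approximation, where $X_{i+1}L_i\asymp 1$ while $L_{i-1}\to 0$, so $X_{i+1}L_{i-1}L_i\asymp L_{i-1}\to 0$. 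The weaker facts that do hold unconditionally --- $\mathrm{covol}\,\langle\bx_{i-1},\bx_i\rangle\asymp X_iL_{i-1}\ge 1$ (indeed $\to\infty$, by Jarn\'ik) and $X_{i+1}L_i\gg 1$ --- do not suffice: combined with (C) they give $\omega(\bxi)\le 1$, not the lower bound $X_{i+1}L_i^2\gg 1$ you need. Your "Dirichlet-type optimality" remark also points the wrong way: Dirichlet gives an \emph{upper} bound $X_{i+1}^mL_i^n\le 1$, not a lower one. In short, the plan as written has a genuine gap precisely at the step you flagged as hard.

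The paper's proof avoids this obstacle entirely by working in the parametric picture. The two ingredients are: (i) Lemma \ref{lem}, which shows that either (b) or (c) forces $h_{\bxi,2}(q)-h_{\bxi,1}(q)\ll 1$, because the gap closes at each $q_i$ and can reopen with slope at most $m+n$ only over intervals whose length is controlled by the bounded-ratio hypothesis; and (ii) the structural fact from \cite{SS1} (Corollary 2.2) or \cite{Wgene} (Theorem 9.2) that $h_{\bxi,2}(q_0)=h_{\bxi,3}(q_0)$ for arbitrarily large $q_0$. At such $q_0$ all three successive minima are within $O(1)$ of each other, and Minkowski's second theorem ($h_1+h_2+h_3=O(1)$) then forces $|h_{\bxi,1}(q_0)|=O(1)$, which is exactly non-singularity. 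The second ingredient is the substitute for the linear-independence you cannot guarantee: it asserts a \emph{coincidence} of the second and third minima infinitely often, a fact not visible at the level of a single determinant of three consecutive best approximation vectors, and it is exactly this structural input that your classical approach is missing. Finally, note that your reduction of Statement 2 to Statement 1 "by duality, swapping $X_i$ and $L_i$" is not rigorous as stated (the best approximation sequences of $\bxi$ and $\bxi^\top$ are not related by such a swap); the paper's Lemma \ref{lem} handles (b) and (c) symmetrically so both statements are proved by the same argument, and the transference comment in the paper concerns singularity of $\bxi$ versus $\bxi^\top$, not the properties (B)/(C).
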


Here we should note that by the famous Khintchine's theorem, the column vector $\pmb{\xi} \in \mathbb{R}^{2\times 1} $ is singular if and only if the row vector $\pmb{\xi}^\top \in \mathbb{R}^{1\times 2}$ is singular, so in the case under consideration properties (B) or (C) ensure non-singularity for the transposed vector as well.

In particular, if we suppose that   $\bxi\in\bbR^{2\times 1}$ satisfies (C) or  $\bxi^\top \in\bbR^{1\times 2}$ satisfies (B), one has $\hat{\omega}(\bxi) = \frac{1}{2}$ and  $\hat{\omega}(\bxi^\top) = 2$.

Note that Theorem ~\ref{nm3} cannot be extended readily to higher dimension $m+n>3$, see remark after Theorem$^\star$ ~\ref{Main}.

\vskip+0.3cm

\begin{remark}
We have mentioned above that in \cite{S}  Schmidt considered the case $ m=1, n \ge 2$ and constructed $\pmb{\xi}$ such that (C) is satisfied but (A) is not satisfied. Here we would like to point out that in Schmidt's construction from \cite{S}, the vector $\pmb{\xi}$ has uniform exponent $\frac{1}{n}$ thus is not \emph{supersingular}.
\end{remark}

\subsection{About admissible ordinary exponent}\label{sss3}

The following theorem states the possible values of the ordinary exponent of Diophantine approximation of the (counter)examples constructed in Theorem \ref{Thm-}.

\begin{thm*}\label{ThmOrd} For dimensions $m,n\ge 1$, consider the sets
\begin{eqnarray*}
S_{B,n,m}(\omega) &:=& \{\bxi \in \bbR^{n \times m} \mid \textrm{ $\bxi$ {is totally irrational},  satisfies (B) but not (A)}, \;  \omega(\bxi) = \omega \},\\
S_{C,n,m}(\omega) &:=& \{\bxi \in \bbR^{n \times m} \mid \textrm{ $\bxi$ {is totally irrational}, satisfies (C) but not (A)},  \; \omega(\bxi) = \omega \}.
\end{eqnarray*}
\begin{enumerate}[\rm1.]
\item If $m\ge2$ and $\omega \in [\frac{m}{n} , \infty]$, the set $S_{B,n,m}(\omega)$ is non-empty.\\
\item If $n\ge 2$ and $\omega' \in [\frac{m}{n} , m]$, the set $S_{C,n,m}(\omega')$ is non-empty.\\
\item If $m,n\ge 2$ and  $\omega' \in [\frac{m}{n} , m]$, the set $S_{B,n,m}(\omega') \cap S_{C,n,m}(\omega')$ is non-empty.\\
\end{enumerate}
\end{thm*}

\begin{remark}
It follows from the proof of Theorem$^\star$~\ref{ThmOrd} that when the sets $S_{B,n,m}(\omega)$, $S_{C,n,m}(\omega)$ or their intersection $S_{B,n,m}(\omega) \cap S_{C,n,m}(\omega)$  are non-empty they contain singular matrices.  We also obtain effective strictly positive lower bounds for their Hausdorff dimension: we apply a variational principle in the context of the parametric geometry of numbers, used for the construction. However precise cumbersome computations do not seem relevant here. We can also obtain full packing dimension in most cases, this is specified along the proof. Recall that Theorem$^\star$~\ref{Thm-} asserts that $S_{B,n,1}(\omega)= S_{C,1,m}(\omega)= \varnothing$.
\end{remark}

We should comment on the optimality of the admissible intervals for $\omega$. The interval of the first statement is optimal, as it is the full interval of admissible value for the ordinary exponent without further condition. This observation also implies that the lower bound $\frac{m}{n}$ in statements 2 and 3 is optimal. We conjecture that the upper bound $m$ when condition (C) holds is optimal. We prove it in the cases $m=1,2$, however for $m\ge 3$ we are only able to prove that the optimal upper bound is finite.

\begin{thm}\label{OptiOrdi}\label{neulemma} Consider a {totally irrational} matrix $\bxi \in \bbR^{n \times m}$.
\begin{enumerate}[\rm1.]
\item If $n\ge 2$ and $m=1$, any $\bxi \in \bbR^{{ n\times 1}}$ satisfying (C) has $\omega(\bxi) \le 1$.
\vskip+0.3cm
\item If $n\ge 2$ and $m=2$, any $\bxi \in \bbR^{{ n\times 2}}$ satisfying (C) has $\omega(\bxi) \le 2$.
\vskip+0.3cm
\item If $n\ge 2 $ and $m \ge 3$, any $\bxi\in \bbR^{n \times m}$ satisfying (C) has $\omega(\bxi) < \infty$.
\end{enumerate}
\end{thm}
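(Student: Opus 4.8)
The plan is to exploit the relation between best approximations and Diophantine exponents together with property (C), which says $L_i/L_{i+1}\le R$ for some constant $R=R(\bxi)<\infty$ and all $i$. Property (C) controls the \emph{multiplicative gap} between consecutive remainders, so between two "witness" indices where the exponent $-\log L_i/\log X_i$ is large, the quantity $\log L_i$ cannot drop too fast relative to the index. The key geometric input is that between consecutive best approximation vectors $\bx_i$ and $\bx_{i+1}$, Minkowski's convex body theorem (or the defining property of best approximations recorded after \eqref{Xsec}) forces a lower bound on how much the height $X_i$ must grow: since any $\bx$ with $\|\bx\|<X_{i+1}$ has $L_{\bxi}(\bx)\ge L_i$, applying Dirichlet/Minkowski in the box $\|\bx\|\le X$ with $X$ chosen so that $X^n L_i^m\asymp 1$ produces a nonzero integer point, which must therefore have norm $\ge X_{i+1}$ (up to constants), giving $X_{i+1}^n \gg L_i^{-m}$, equivalently $X_{i+1}\gg L_i^{-m/n}$.

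The core of the argument is then a counting/telescoping estimate. Suppose $\omega(\bxi)=\omega$, so there are arbitrarily large $i$ with $-\log L_i \ge (\omega-\eps)\log X_i$. Fix such a large index $N$ and look back at the block of best approximations $\bx_1,\dots,\bx_N$. First I would use property (C) to bound $N$ from below in terms of $L_N$: since $L_1\le 1$ and $L_i/L_{i+1}\le R$, telescoping gives $L_N \ge R^{-(N-1)} L_1^{?}$, hence $N \gg \log(1/L_N)/\log R \gg (\omega-\eps)\log X_N/\log R$. Next I would bound $N$ from \emph{above} using the height growth: from $X_{i+1}\gg L_i^{-m/n}$ and the fact that, by property (C) again, $L_i \ge R^{-(N-i)}L_N$, one gets a lower bound on $X_{i+1}$ for each $i<N$, and since the $X_i$ are strictly increasing integers this cannot hold for too many indices unless the individual gaps are genuinely large; combined with $X_N = \prod (X_{i+1}/X_i)$ this yields $\log X_N$ bounded in terms of $N$ and $m,n,R$. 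Matching the two bounds on $N$ produces an inequality of the shape $\omega \le f(m,n,R)$ — finite for all $m,n$, proving statement 3. For statements 1 and 2 ($m=1,2$) the estimates are clean enough that the constant $R$ drops out of the leading-order comparison: the growth relation $X_{i+1}\gg L_i^{-m/n}$ together with $L_i\ge L_{i+1}$ and the combinatorics of best approximations in dimension $m+n$ with $m\in\{1,2\}$ forces the bound $\omega\le m$ exactly, because in those low dimensions one can pair the "large $L$-drop" steps with "large $X$-growth" steps via a transference/duality argument and a direct estimate $\prod_{i<N}\max(1, X_{i+1}^n/X_i^n) \ll \prod_{i<N} L_i^{-m}/L_{i+1}^{-m}$ telescoping to $X_N^n \ll L_N^{-m}$, i.e. $-\log L_N \le (m/n)\cdot n\log X_N = m\log X_N$ wait — more precisely $\omega(\bxi)\le m$.

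I expect the main obstacle to be statements 1 and 2, namely making the dependence on $R$ vanish to obtain the \emph{sharp} bounds $\omega\le 1$ and $\omega\le 2$ rather than merely a finite bound. This requires a careful amortized analysis: one must show that the steps where $L_i/L_{i+1}$ is large (there can be many, but their product over a block is $L_1/L_N$) are essentially forced to coincide with, or be charged against, steps where $X_{i+1}/X_i$ is large, and the geometry-of-numbers bound $X_{i+1}\gg L_i^{-m/n}$ must be used not just once but summed/multiplied across the whole block. For $m=1$ this is classical (it is essentially the continued-fraction-type estimate underlying Theorem \ref{Thm+}(2)); for $m=2$ it will likely need the two-dimensional parametric-geometry-of-numbers picture — tracking the successive minima functions and using that the second compound minimum cannot stay small for long. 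For $m\ge 3$, obtaining sharpness is exactly where the method breaks, which is why statement 3 settles only for finiteness; the quantitative $f(m,n,R)$ coming out of the crude telescoping is almost certainly not optimal, consistent with the remark preceding the theorem that the optimal upper bound is conjectured to be $m$ but unproven for $m\ge 3$.
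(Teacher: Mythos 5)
Your argument for Statement~3 matches the paper's: you telescope (C) to get $L_k\gg R^{-k}$, use the exponential growth of best approximation denominators (the paper cites Bugeaud--Laurent's $X_{k+3^{m+n}-1}\ge 2X_k$) to get $X_k\gg c^k$ for some $c>1$, and then $\omega=\limsup\frac{-\log L_k}{\log X_k}\le\frac{\log R}{\log c}<\infty$. That part is correct and is exactly the paper's route.

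For Statements~1 and~2, however, there is a genuine gap, and your sketch is pointing the wrong way. First, the Minkowski/Dirichlet box argument gives an \emph{upper} bound on $X_{i+1}$: the box $\{\|\bx\|<X_{i+1},\ L_{\bxi}(\bx)<L_i\}$ contains no nonzero integer point by definition of best approximations, so its volume is $\ll 1$, i.e.\ $X_{i+1}^m L_i^n\ll 1$, equivalently $X_{i+1}\ll L_i^{-n/m}$. Your claimed inequality $X_{i+1}\gg L_i^{-m/n}$ has both the direction reversed and the exponents $m,n$ swapped, and no version of the box argument produces a lower bound on $X_{i+1}$. Second, even granting some telescoped inequality of the form $X_N^n\ll L_N^{-m}$, that is the \emph{wrong sense} for the conclusion: it reads $n\log X_N\lesssim -m\log L_N$, i.e.\ $\frac{-\log L_N}{\log X_N}\gtrsim\frac{n}{m}$, which is the trivial lower bound $\omega\ge\frac{m}{n}$ (in the paper's normalization), not an upper bound $\omega\le m$. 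To bound $\omega$ from above you need a \emph{lower} bound of the shape $X_i^a L_i\gg 1$ along a subsequence. The paper obtains precisely this via lattice determinants, not via volume bounds: for $m=1$ it uses the fact that the covolume of $\Lambda_i=\langle\bx_{i-1},\bx_i\rangle_{\bZ}$ satisfies $L_{i-1}X_i\asymp\mathrm{covol}\,\Lambda_i\to\infty$ (Jarn\'{\i}k), and then (C) replaces $L_{i-1}$ by $L_i$ to give $L_iX_i\to\infty$ and hence $\omega\le 1$; for $m=2$ it picks three consecutive \emph{independent} best approximations, forms a $3\times 3$ minor with two $\bx$-rows and one $\by$-row, bounds the absolute value of that integer determinant below by $1$ and above by $X_iX_{i+1}L_{i-1}$, and then invokes (C) to pass from $L_{i-1}$ to $L_{i+1}$, giving $L_{i+1}X_{i+1}^2\gg 1$ and thus $\omega\le 2$ (with a separate sub-case when the relevant $\bx$-rows are proportional, and a separate case when all best approximations are trapped in a plane). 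This determinantal mechanism --- producing a nonzero integer $k\times k$ minor and lower-bounding it by $1$ --- is the key idea missing from your proposal, and it is what lets the constant $R$ from (C) drop out of the leading-order bound, which you correctly identified as the crux but did not supply.
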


\subsection{About admissible uniform exponent}
\label{sss4}

For $m=1$, L. Summerer \cite{L} shows that $\bxi\in\bbR^{1\times n}$ from \eqref{thm-2} can be chosen to have any  given uniform exponent $\hat{\omega}\in [\frac{1}{n}, \frac{1}{2} ]$, and that this interval is best possible, see Proposition 2 from \cite{AM}. We extend this observation to arbitrary  dimension.

\begin{thm*}\label{Main} 
Let $ n \ge 1$.
For the matrices $\bxi \in \bbR^{n \times m}$ considered in Theorem$^\star$~\ref{Thm-}, we can reach the following uniform exponents. 
\begin{enumerate}[(i)]
\item{Property (B) but not (A)}
\begin{enumerate}[]
\item{If $m\ge3$, then for any $\hat{\omega} \in [\frac{m}{n}, \infty]$ there exists  $\bxi\in \bbR^{n \times m}$ such that (B) holds but  (A) does not hold and $\hat{\omega}(\bxi) = \hat{\omega}$.}
\end{enumerate}

\item{Property (B) and (C) but not (A)}
\begin{enumerate}[a)]
If $\min(m,n)\geq 2$, then for any $\hat{\omega} \in [\frac{m}{n}, \frac{m^2}{m+1}]$ there exists  $\bxi\in \bbR^{n \times m}$ such that both (B) and (C) hold but  (A) does not hold and $\hat{\omega}(\bxi) = \hat{\omega}$.
\end{enumerate}

\end{enumerate}
\end{thm*} 

We now discuss the optimality of these intervals. Clearly (i) is optimal as the interval is the full admissible interval without constraint. The next theorem provides some bounds for (ii) 
{
in the  case $m=2$. In this case we approximate by integer points two-dimensional linear subspace
\begin{equation}\label{frakel}
 \frak{L} =  \frak{L}_{\pmb{\xi}}=\{\bx = (\pmb{x},\pmb{y})^\top,\,\, \pmb{x} \in \mathbb{R}^{2},\,\, 
\pmb{y} \in \mathbb{R}^{n}
: \,\,\, \pmb{y} = \pmb{\xi}\pmb{x}\}.
\end{equation}
Notice that  it may happen that subspace $\frak{L}_{\pmb{\xi}}$ belongs to a certain rational subspace.
 To deal with the case $m=2$ we need the following condition
 (which in fact is valid for totally irrational $\pmb{\xi}$):
 \begin{equation}\label{conti}
 \frak{L}_{\pmb{\xi}} \,\,\,\,\,\text{ does not belong to any three-dimensional rational subspace}.
\end{equation}
}

\begin{thm}\label{optimn} Fix $m=2$ and $n\ge1$, we have the following results.

\begin{enumerate}[\rm1.]
\item{} If { totally irrational} $\bxi\in \bbR^{{ n\times 2}}$ satisfies property (B) or property (C), then $\hat{\omega}(\bxi) \le 2$.
\item{} 
If {totally irrational 
} $\bxi\in \bbR^{{ n\times 2}}$ 
{(and so satisfying condition (\ref{conti})})
satisfies properties (B) and (C) simultaneously, then $\hat{\omega}(\bxi) \le 4/3$.
\end{enumerate}
\end{thm}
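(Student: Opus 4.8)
The plan is to work in the parametric geometry of numbers for the dimension $d = m+n$ (here $d = n+2$), encoding the matrix $\bxi$ via the lattice $\Lambda_t = \diag(e^{t/2},\ldots,e^{t/2},e^{-nt/(2 \cdot 1)},\ldots) \bxi^*$-type flow and tracking the successive minima functions $L_1(t)\le \cdots \le L_d(t)$ (in the additive normalization $L_j(t) = \log \lambda_j(\Lambda_t)$). The key translation is the standard one: the ordinary and uniform exponents $\omega, \hat\omega$ are read off from the $\limsup$/$\liminf$ behaviour of the first minimum $L_1(t)$, while the \emph{jumps} in the combined function $\sum_j L_j(t) \equiv O(1)$ correspond to changes of the best-approximation lattice points. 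The crucial point is that properties (B) and (C) each impose a quantitative constraint on how the successive minima diagram is allowed to evolve: boundedness of $X_{i+1}/X_i$ means consecutive ``$X$-type'' best approximations differ multiplicatively by $O(1)$, which in the parametric picture forbids the first minimum from \emph{descending} too steeply over a long $t$-interval without an intermediate point appearing; dually, boundedness of $L_i/L_{i+1}$ forbids the first minimum from \emph{staying flat then jumping} — i.e. controls the ascending behaviour. I would make this precise by the same kind of argument used for Theorem \ref{nm3} and for the case $m=1$ in \cite{AM}, \cite{L}: translate (B) into ``the function $L_1$ cannot have slope more negative than some fixed bound for long'' and (C) into ``$L_d$ (equivalently, the last minimum, governed by the $L_i$'s) cannot rise too fast'', then combine.

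For statement 1, suppose $\hat\omega(\bxi) > 2$. Then by \eqref{defexp} there is a sequence of indices $i$ with $-\log L_i > (2+\eps)\log X_{i+1}$. The plan is to derive a contradiction with the single hypothesis (B) (the (C) case is the transposed statement, which by the symmetry remarks after Theorem \ref{nm3} and the duality $\omega(\bxi^\top), \hat\omega$ relations — together with the fact that for $m=2$ the relevant bound is self-dual — follows likewise, or is proved by the same method applied to $\bxi^\top \in \bbR^{m\times n}$). Concretely, with $m=2$: Dirichlet gives best approximations with $X_{i+1}^{2} L_i^{n} \le 1$ roughly, and a large uniform exponent forces a long ``Dirichlet-type'' interval in the parametric diagram where only the first minimum is small and all others are comparably large; over such an interval, in a $(n+2)$-dimensional lattice, the second-compound / intermediate minima must also move, and (B) — boundedness of the ratio of successive $X_i$ — pins down that the next best approximation vector occurs within a bounded multiplicative window, which caps the length of the descent. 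Carrying out the bookkeeping with the trajectory of $\lambda_1$ and $\lambda_2$ and using $\sum L_j = O(1)$ should yield exactly $\hat\omega \le 2 = m$. (This is the $m=2$ case of the conjectured general bound $\hat\omega \le m$ under (B) or (C), consistent with statement (i) of Theorem \ref{Main} being optimal only for $m \ge 3$ via the \emph{unbounded} interval — note for $m=2$ one does \emph{not} expect to reach $\hat\omega = \infty$.)

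For statement 2, one assumes \emph{both} (B) and (C) and wants $\hat\omega \le 4/3$. Here I would again argue by contradiction in the parametric picture, now exploiting that (B) controls descents of $\lambda_1$ \emph{and} (C) controls ascents simultaneously, so the first-minimum trajectory is squeezed between two linear barriers whose slopes are determined by the ratio bounds. A suboptimal-but-flexible $\bxi$ with large $\hat\omega$ would need a diagram with long near-Dirichlet stretches; the two boundedness conditions together force the ``return'' portions of each such stretch to be long enough that, averaging the slope of $\lambda_1$ over a full period and using $\sum_j L_j = O(1)$ in dimension $d = n+2$ with $m=2$, one gets $\hat\omega(\bxi) = \liminf (-\log L_i)/\log X_{i+1} \le 4/3$. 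The number $4/3 = m^2/(m+1)$ at $m=2$ is exactly the endpoint appearing in Theorem \ref{Main}(ii), so the computation should be arranged to reproduce that fraction: the interplay is that a descent of $\lambda_1$ of total $t$-length $a$ must be ``paid back'' by an ascent phase, (B) says the paying-back cannot be compressed, (C) says the descent cannot be compressed, and optimizing the resulting piecewise-linear template over one period gives ratio $4/3$.

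The main obstacle I expect is \textbf{statement 2}: one must correctly identify which combinatorial template of the successive-minima diagram is extremal under both constraints at once, and then prove that \emph{every} admissible diagram is dominated by it (a lower-bound-on-the-liminf argument, which is the delicate direction — the construction matching $4/3$ is the easier half, presumably folded into Theorem \ref{Thm-}/\ref{Main}). In particular one has to be careful that the extra minima $\lambda_3, \ldots, \lambda_d$ (present because $n$ may be large) do not give the trajectory more room to cheat; the point is that only $\lambda_1$ and $\lambda_2$ (and their interaction with the rest through the determinant constraint) govern $\hat\omega$ when $m = 2$, and the higher minima can be bounded away crudely. Making that reduction clean — essentially a $3$-dimensional argument ($\lambda_1, \lambda_2$, and the lumped remainder) inside the $(n+2)$-dimensional diagram — is where the real work lies.
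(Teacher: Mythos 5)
Your outline for Statement 1 is in the right spirit (work in the parametric picture, use the vanishing of $\sum_j h_{\bxi,j}$) but misses the specific mechanism the paper uses, and for Statement 2 it is on the wrong track.

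For Statement 1, the actual argument hinges on two facts you don't identify: first, Lemma \ref{lem} (a consequence of \eqref{ssyq}), which says that \emph{either} (B) \emph{or} (C) forces $h_{\bxi,1}$ and $h_{\bxi,2}$ to stay within bounded distance of each other; second, the connectedness property (Schmidt, \cite[Theorem 9.2]{Wgene}), which gives arbitrarily large $q$ with $h_{\bxi,2}(q)=h_{\bxi,3}(q)$. Along such $q$ one therefore has $h_{\bxi,1}\approx h_{\bxi,2}=h_{\bxi,3}$, Minkowski's identity then yields $3 h_{\bxi,1}(q) \ge -(m+n-3)h_{\bxi,m+n}(q)+O(1)$, and the trivial slope bound $h_{\bxi,m+n}(q)\le mq+O(1)$ plus \eqref{LimPOmega} gives $\hat\omega\le 2$ at $m=2$. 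In particular (B) and (C) are treated \emph{uniformly} through Lemma \ref{lem}; your proposed reduction of the (C) case to the transposed matrix would change the dimensions to $(n,2)$ and does not deliver the same statement, so it should be dropped.

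For Statement 2, your plan — squeeze the first-minimum trajectory between two linear barriers coming from (B) and (C) and optimize a one-period template — contains a genuine gap. Conditions (B) and (C) translate (via \eqref{ssyq}) only into \emph{local} bounds on the lengths of the slope-$m$ and slope-$(-n)$ stretches of $h_{\bxi,1}$; they place no constraint on the long-term drift of $h_{\bxi,1}$, and indeed Theorem \ref{Thm-} shows that (B)+(C) do \emph{not} prevent $h_{\bxi,1}\to -\infty$. No purely parametric ``barrier'' argument on $h_{\bxi,1}$ (or $h_{\bxi,1},h_{\bxi,2}$ and a lumped remainder) can see the information the paper actually exploits: that best approximation vectors can stay inside a fixed two-dimensional rational subspace for a long stretch, and that during such a stretch the quantity $L_\nu X_{\nu+1}$ is essentially monotone (Lemma \ref{monotonicity}, the monotonicity lemma adapted from \cite{Szeged}). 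The paper's proof combines this with determinant bounds from two consecutive linearly independent triples of best approximations — producing \eqref{G1}, \eqref{G2}, \eqref{G3}, \eqref{G4} — and only then uses the ratio bounds from (B) and (C) to close the system of inequalities and extract $\alpha\le 4/3$. This linear-algebraic bookkeeping over which subspace the best approximations span is invisible in the successive-minima diagram alone, so to repair your approach you would need to re-import exactly that structural input, at which point you are essentially redoing the paper's argument.
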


{
\begin{remark}
Statement 2 of Theorem \ref{optimn} points out  an additional assertion concerning matrix $\pmb{\xi}$. We should note that in the degenerate case when 
 $ \frak{L}_{\pmb{\xi}}$ belongs to a three-dimensional rational subspace, the situation is the same as in the case of one linear form in 
 two variables 
($m=2,n=1$) and property (C) leads to badly approximability of the corresponding linear form and equality  $\hat{\omega}(\bxi)=\omega(\pmb{\xi}) = 2$.
\end{remark}
}
\begin{remark}

The inequality  from Statement 2. of Theorem~\ref{optimn} shows that Statement (ii) of Theorem$^\star$~\ref{Main} is optimal for $m=2$.

Note that  (ii) of Theorem$^\star$~\ref{Main} in particular shows that 
\vskip+0.1cm
\noindent
a) For $m=2$ and for any $\hat{\omega} \in [\frac{2}{n}, \frac{4}{3}]$ there exists { totally irrational}  $\bxi\in \bbR^{{n\times 2}}$ such that (B) holds but (A) does not hold and $\hat{\omega}(\bxi) = \hat{\omega}$;
\vskip+0.1cm
and combined with Summerer \cite{L} ($m=1$)
\vskip+0.1cm
\noindent
b)  For $m\ge 1$ and $n>1$, and for any $\hat{\omega} \in [\frac{m}{n}, \frac{m^2}{m+1}]$, there exists { totally irrational} $\bxi \in \bbR^{n\times m}$ such that (C) holds but  (A) does not  and $\hat{\omega}(\bxi) = \hat{\omega}$. 
\vskip+0.1cm
We do not know 
 if a) and b) here  give optimal results.
\end{remark}

Again, regarding Hausdorff dimension, if the uniform exponent is finite, a strictly positive but cumbersome lower bound for the sets considered in Theorem$^\star$~\ref{Main} can be deduced from their construction. It appears to be bounded above at least by the dimension of the set of $2$-singular matrices (see Lemma \ref{lem}) and the set of supersingular matrices of levelset $\hat{\omega} > m/n$. The latters are not precisely known - see \cite{David} for more details, including a definition of $k$-singularity. We can also obtain information on the packing dimension.\\

\begin{remark}\label{RmkAlg} In the previous results, each time the constructed sets have full packing dimension $mn$, we can assume that the coordinates of the matrix $\bxi \in \bbR^{n \times m}$ are not only $\mathbb{Q}$-linearly independent but also algebraically independent. Indeed, the set of $\bxi\in \bbR^{n \times m}$ with algebraically dependent entries has packing dimension $mn - 1 < mn$, hence there is a full dimensional complement set of $\bxi$ left to choose from.

\end{remark}

\begin{remark}
Theorem$^\star$~\ref{Main} provides supersingular points satisfying both (B) and (C), proving that Theorem \ref{nm3} does not extend readily to $m+n>3$. 
\end{remark}

\subsection{Open problems}\label{OpenProblems} \

We gather some problems of interest that remain open.\\
\paragraph{\textbf{Problem 1:}} For $m\ge 3$ and $n\ge1$, find an upper bound for the ordinary exponent $\omega$ of matrices {totally irrational} $\bxi\in \bbR^{n \times m}$ satisfying (C).
We conjecture that the optimal bound is $m$.\\

\paragraph{\textbf{Problem 2:}} For $m\ge 2$ and $n\ge1$, find an upper bound for the uniform exponent $\hat{\omega}$ of { totally irrational} matrices $\bxi\in \bbR^{n \times m}$ satisfying (C).
We conjecture that the optimal bound is $\frac{m^2}{m+1}$.\\

 For both Problem 1 \& 2, one may first provide upper bounds for the subset of { totally irrational} matrices $\bxi\in \bbR^{n \times m}$ satisfying both (B) and (C). This consideration and phenomena observed in our construction in section \ref{qq4} leads to\\

\paragraph{\textbf{Problem 3:}} Are the upper bounds for ordinary and/or uniform exponents the same for matrices {totally irrational} $\bxi\in \bbR^{n \times m}$ satisfying only (C) and { totally irrational} matrices $\bxi\in \bbR^{n \times m}$ satisfying both (B) and (C)?\\

Observe that in our construction of families of templates in Section ~\ref{Construction}, the first component has non-extremal slope on arbitrarily long intervals. In other words, for $\bxi$ with successive minima function at bounded distance, the sequence of best approximation remains in some lower dimensional subspace for an arbitrarily long time. With this observation, one can tackle the following problem.\\

\paragraph{\textbf{Problem 4:}} Construct examples of { totally irrational} matrices $\bxi\in \bbR^{n \times m}$ for Theorem$^\star$~\ref{Thm-}, Theorem$^\star$~\ref{ThmOrd} and Theorem$^\star$~\ref{Main} without the tools of parametric geometry of numbers (theory of templates).\\

In particular, one probably does not need the full strength of the variational principle.
{
We are confident that \textbf{Theorem}$^\star$~\ref{Thm-} can be proven by a direct explicit construction, using no specific tools from parametric geometry of numbers.
This gives one more motivation to \textbf{Problem 4}.}

\subsection{Structure of the rest of the paper and some methods}\label{sss5}  \
  
\vskip+0.1cm 

Section  \ref{PGN} is devoted to the methods of parametric geometry of numbers. In Subsection \ref{123ss}  we explore Schmidt's observation \cite{S} how properties (B) and (C) can be translated to the language of parametric geometry of numbers. Subsection ~\ref{123sss} deals with a brief description of the variational principle  \cite{David} and the methods of its application. In Subsection \ref{123ssss}  we provide  proofs of some technical results dealing with bad approximability and boundedness of ratios in terms of parametric geometry of numbers.  
 In Subsection  \ref{Indepnorm} we prove {Theorem} \ref{normindep} which states that property of bounded ratios is independent of the norms. This easy proof uses only basic concepts of parametric geometry of numbers related to Schmidt's observation.
Section \ref{noRoy} is devoted to the proofs of those of our results, which  do  not involve  any application of the variational principle from \cite{David}.
In Subsection \ref{stpl} we give a proof of Theorem \ref{Thm+}   purely in terms of parametric geometry of numbers.
In Subsection \ref{stpl1} we give proofs for Theorem  \ref{nm3} about sufficient condition of singularity in the case $m+n=3$, which is rather simple.
In Subsection  \ref{stpl2} we give proofs for Theorem ~\ref{OptiOrdi} and Theorem ~\ref{optimn} which ensure upper bounds for exponents $\omega(\pmb{\xi})$ and $\hat{\omega}(\pmb{\xi})$ under various conditions of boundedness ratios.These proof do not rely on parametric geometry of numbers. They are based on analysis geometry of best approximation vector  and deal with well-chosen determinants. Here we should note that our proof of the second statement of Theorem \ref{optimn} (upper bound of $\hat{\omega}(\pmb{\xi})$  in the case $m=2$ when $\pmb{\xi}$ satisfies conditions (B) and (C) simultaneously) is related to a monotonicity lemma (Lemma \ref{monotonicity}) which is a variant of a lemma from \cite{Szeged}. In Appendix we give a proof of our Lemma \ref{monotonicity} and provide all the necessary adaptations.

 Section \ref{qq0} contains the proof of results related to application  main theorem from \cite{David}.
 Here we prove Theorem$^\star$~\ref{Thm-}, Theorem$^\star$~\ref{ThmOrd} and Theorem$^\star$~\ref{Main}. The structure of the proof is as follows. We effectively construct connected templates which satisfy the properties of boundedness of corresponding ratios, expresses in terms of parametric geometry of numbers, and then refer to \cite{David} to ensure the existence of matrices  $\pmb{\xi}$ with the similar properties.
 Templates for Theorem$^\star$~\ref{Thm-} are constructed in Subsection~\ref{beau}. Templates for Theorem$^\star$~\ref{ThmOrd} are constructed in Subsection~\ref{qq1}. Templates for Theorem$^\star$~\ref{Main}. are constructed in Subsection~\ref{qq3}.

\section{Parametric geometry of numbers}\label{PGN}
 
The \emph{parametric geometry of numbers} was developed by Schmidt and Summerer \cite{SS1,SS2} for $m=1$ or $n=1$ , following a question by Schmidt in \cite{SLum}. It was pushed further with a fundamental Theorem by Roy \cite{Roy}.  Das, Fishman, Simmons and Urba\'nski \cite{David} extends the setting to any $m,n\ge1$, with a quantitative aspect. Solan \cite{Solan} considers a weighted setting.

\subsection{Schmidt's observation on properties (B) and (C)}\label{123ss}
Here, we follow the notation and discussion from Schmidt in \cite{Wgene,S} and extend it to $\min(m,n)>1$.\\

Fix the dimension $m,n\ge1$ and $\bxi \in \bbR^{n \times m}$ { totally irrational}. For a real $q>0$, we consider the symmetric convex body
\begin{equation}\label{defconv}
\cC_\bxi (q) = \left\{ { \bx =  (\pmb{x},\pmb{y})^\top } \in \bbR^{m+n}  \mid \|\pmb{x}\| \le e^{nq}, \quad L_{\bxi}(\bx) \le e^{-mq} \right\}.
\end{equation}
The parametric geometry of numbers is the simultaneous study of the successive minima functions when the parameter $q$ tends to infinity. Namely, set 
\[h_{\bxi,d}(q) = \log \lambda_d (  \cC_\bxi(q))  \]
where $\lambda_d (K)$ is the d\textsuperscript{th} successive minima of the convex body $K$ with respect to the integer lattice $\bZ^{m+n}$. Minkowski's Second Convex Body Theorem ensures that 
\[ h_{\bxi,1}(q) + \cdots + h_{\bxi,m+n}(q) = O(1). \]
Denote by $\bh_\bxi = (h_{\bxi,1}, \ldots , h_{\bxi,m+n})$ the $(m+n)$-tuple of the logarithms of successive minima. Consider for any $\bx \in \bZ^{m+n}$ the gauge function
\[ \lambda_\bx \left(\cC_\bxi(q)\right) = \min \left(  \lambda >0 \mid \bx \in \lambda \cC_\bxi(q) \right)\]
and its logarithm 
\begin{equation}\label{defgauge}
h_\bx(q) := \log \left( \lambda_\bx \left(\cC_\bxi(q)\right)\right)  = \max \left(\log \| \bx\| -nq, \; \log L_{\bxi}(\bx) +mq \right).\end{equation}
Note that this function is first with slope $-n$, and then with slope $m$. Hence, since the joint graph of the components of $\bh_\bxi$ is contained in the union over $\bx \in \bZ^{m+n}$ of the graphs of $h_\bx$, the functions $h_{\bxi,d}(q)$ are piecewise linear with slope $-n$ and $m$, for $1\le d \le m+n$. Note that 
\begin{equation}\label{h1}
h_{\bxi,1}(q) = \min_{\bx \in \bZ^{m+n}}h_\bx(q)
\end{equation}
 and is exclusively composed of parts of $h_{\bx_i}(q)$, where $\bx_i$ is the sequence of best approximations to $\bxi$ (by definition of the latter). Consider now the sequence
\begin{equation}\label{defpiqi}
p_1 < q_1 < p_2 < q_2 < \cdots < p_i < q_i < p_{i+1} < q_{i+1} < \cdots
\end{equation}
such that $h_{\bxi,1}(q)$ has slope $-n$ on the intervals $[q_i, p_{i+1}]$ and slope $m$ on the intervals $[p_i,q_i]$. In \cite{Wgene}, the $q_i$'s are called \emph{nick numbers}. Since we supposed  $L_\bxi(\bx)>0$ for all $\bx \in \bZ^{m+n}$, this sequence is infinite. Up to reindexing, we may assume that $h_{\bxi,1}(q_i) = h_{\bx_i}(q_i) = h_{\bx_{i+1}}(q_i)$ and $h_{\bxi,1}(p_i) =  h_{\bx_i}(p_i)$  where $h_{\bx_i}$ changes of slope at $p_i$. See Figure \ref{fig1}. It follows that
\begin{eqnarray}
h_{\bx_i}(q_i)  &=&  m q_i + \log L_i  =  \log X_{i+1}  -n q_i    = h_{\bx_{i+1}}(q_i)\\
h_{\bx_i}(p_i) &=& \log X_i -n p_i= \log L_i + m p_i
\end{eqnarray}
from which we deduce
\begin{eqnarray}\label{pi}
(m+n) p_i  &=&  \log X_{i} - \log L_i,\\\label{q_i}
(m+n) q_i &=& \log X_{i+1} - \log L_i.
\end{eqnarray}

 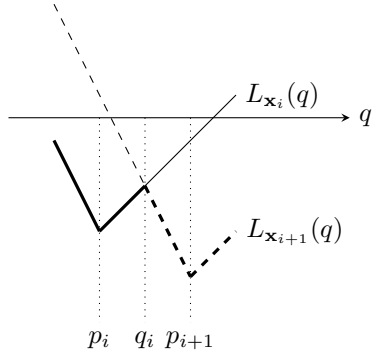
\begin{figure}[h!]
 \begin{center}
 \begin{tikzpicture}[scale=0.3]
 
   \draw[black, very thick] (0,0) --(2,-4) node [below,black] {$$} ;
   \draw[black, very thick] (2,-4) --(4,-2) node [below,black] {} ;
   \draw[black, thin] (4,-2) --(8,2) node [right,black] {$L_{\bx_i}(q)$} ;
   \draw[black, thin, dashed] (0,6) --(4,-2) ; 
   \draw[black, very thick, dashed] (4,-2) --(6,-6) node [below,black] {} ;
   \draw[black, very thick, dashed] (6,-6) --(8,-4)node [right,black] {$L_{\bx_{i+1}}(q)$} ;
   
   \draw[black, thin, dotted] (4,1) -- (4, -8) node [below,black] {$q_{i}$};
   \draw[black, thin, dotted] (2,1) -- (2, -8) node [below,black] {$p_{i}$};
   \draw[black, thin, dotted] (6,1) -- (6, -8) node [below,black] {$p_{i+1}$};
   
    \draw[black, thin , -stealth] (-2,1) --(13,1) node [right,black] {$q$} ;
;

 \end{tikzpicture}
 \end{center}
 \caption{$h_{\bx_i}(q)$ is figured plain, $h_{\bx_{i+1}}(q)$ is figured dashed, and $h_{\bxi,1}(q)$ is figured thick. Slopes are $-n$ and $m$, here chosen to be $-2$ and $1$.}\label{fig1}
 \end{figure}

Combining these observations yields
\begin{equation}\label{ssyq}
|q_i - p_i| = \frac{1}{m+n}\log \left( \frac{X_{i+1}}{X_i} \right) ,\,\,\,\,
|p_{i+1}- q_i | = \frac{1}{m+n} \log \left( \frac{L_{i}}{L_{i+1}} \right).
\end{equation}

It allows to interpret the conditions (B) and (C) in terms of the function $h_{\bxi,1}$. Namely, condition (B) is equivalent to

\begin{enumerate}[(b)]
\item{} $h_{\bxi,1}(q)$ has slope $m$ on intervals of bounded length, that is $|q_i - p_i| \ll 1$ for all $i\ge1$.
\end{enumerate}

Analogously  condition (C) is equivalent to 

\begin{enumerate}[(c)]
\item[(c)]{} $h_{\bxi,1}(q)$ has slope $-n$ on intervals of bounded length, that is  $|p_{i+1}- q_i | \ll 1$ for all $i\ge1$.
\end{enumerate}

Concerning bad approximability (condition (A)), observe that $h_{\bxi,1}(q)$ has a local minimum at $p_i$ where
\begin{equation}
h_{\bxi,1}(p_i) = \frac{1}{m+n} \log\left(  X_i^m L_i^n\right).
\end{equation}
Hence, (A) is equivalent to 
\begin{enumerate}[(a)]
\item $h_{\bxi,1}(q)$ remains at bounded distance from $0$.\\
\end{enumerate}
 
 Analogously, $\bxi$ is singular if $\limsup_{q \to \infty}  h_{\bxi,1}(q) = -\infty$.\\
 
 We can also use \eqref{pi}, \eqref{q_i} to evaluate the ordinary and uniform exponents of $\bxi$. Namely, equalities
\begin{equation}
\frac{h_{\bxi,1}(p_i)}{p_i} = \frac{m+n}{1- \frac{\log L_i}{\log X_i}} -n     \quad , \quad \frac{h_{\bxi,1}(q_i)}{q_i}  = \frac{m+n}{1- \frac{\log L_i}{\log X_{i+1}}} -n  
\end{equation}
lead to the formulae
\begin{equation}\label{LimPOmega}
\limsup_{q\to\infty}\frac{h_{\bxi,1}(q)}{q} = \frac{m-n \hat{\omega}(\bxi)}{1+ \hat{\omega}(\bxi)} \; \textrm{ and } \; \liminf_{q\to\infty}\frac{h_{\bxi,1}(q)}{q} = \frac{m-n{\omega}(\bxi)}{1+ {\omega}(\bxi)}.
\end{equation}
 
 \subsection{Templates and variational principle}\label{123sss} 
 We now define the family $\cT_{m,n}$ of balanced  $(m,n)$-templates. It was first introduced in \cite{David} up to a multiplying factor $mn$. It naturally generalizes the notions of systems introduced by Schmidt-Summer and Roy to $\min(m,n)>1$.
\begin{defn}\label{DefTemp}
We define a  (balanced) \,  $(m,n)$-template as a $(m+n)$-tuple of functions $\bP = (P_1, \ldots , P_{m+n})$ defined on an interval $I$ having
  \begin{enumerate}[(i)]
  \item{} $P_1(q) + \cdots + P_{m+n}(q) = 0$ for all $q \in I$.
  \item{}$ P_1(q) \le P_2(q) \le \cdots \le P_{m+n}(q)$ for all $q \in I$.
  \item{} $P_i$ is piecewise linear with slopes $\frac{-kn + lm}{k+l} \in [-n,m]$ where $k+l \ge 1$ and $0\le k \le m$ and $0 \le l \le n$, with finitely many pieces in any subinterval not containing $0$.
  \item{} For all $j = 1, \ldots , m+n$ and for every interval $I$ such that $P_j < P_{j+1}$ on $I$, the function $F_j = \sum_{i \le j} P_i$ is convex and piecewise linear on $I$  with slopes $-kn + lm$, where $k+l \ge 1$ and $0\le k \le m$ and $0 \le l \le n$.\\
  \end{enumerate}
\end{defn}
This family of functions  $\cT_{m,n}$ proves to approach precisely the successive minima functions $\bh_{\bxi}$ when $\bxi$ ranges through $\bbR^{n \times m}$. This is a fundamental Theorem of Roy \cite{Roy}
(case $\min(m,n)=1$), extended to $\min(m,n)>1$ by Das, Fishman, Simmons and Urba\'nski \cite{David} with a quantitative aspect. In our proofs of Theorem$^\star$~\ref{Thm-}, Theorem$^\star$~\ref{ThmOrd} and Theorem$^\star$~\ref{Main},  we construct relatively simple families of connected templates $ \bP$.
Then we use the main result from \cite{David}, recalled in Theorem \ref{VarPrinc} below to ensure the existence of matrices $\pmb{\xi}$  which correspond to this templates. That is the successive minima functions of $\pmb{\xi}$ is at bounded distance from elements of $ \bP$.

\begin{remark}\label{rmktpq}
The subfamily of templates with slopes restricted to the set $\{m,-n\}$ is dense in the set of templates. One can suppose this restriction without loss of generality. This allows for considering for such a template the sequence \eqref{defpiqi}. 
\end{remark}

Before we state this \emph{variational principle in parametric geometry of numbers}, we need to introduce some quantities and definition. 

For $q >0$, one can define the \emph{local contraction rate} $\delta(\bP,q)$ of a $(m,n)$-template $\bP$ at $q$. The definition can be found in \cite[Definition 4.5]{David}. We omit it here as it is unnecessarily technical. For our purpose, we only need the following :

\begin{equation}\label{Delta0}
\bP(q)= \boldsymbol{0}, \forall q \in [q_1,q_2]  \Rightarrow \delta(\bP,q) = mn, \forall q \in [q_1,q_2]
\end{equation}
Note that $mn$ is the maximal value of $\delta(\bP,q)$. Denote the average contraction rate on an interval $[q_1,q_2]$ by
\[ \delta([q_1,q_2]) = \frac{1}{q_2-q_1} \int_{q_1}^{q_2} \delta(\bP,q) \dd q.\]
The \emph{upper (resp. lower) average contraction rate} $\overline{\delta}(\bP)$ (resp. $\underline{\delta}(\bP)$) of a template $\bP$ is defined by
\[\underline{\delta}(\bP) = \liminf_{q\to\infty} \delta([q_1,q]), \]
\[\overline{\delta}(\bP) = \limsup_{q\to\infty} \delta([q_1,q]). \]

Given a $(m,n)$-template $\bP$, consider sets
\[\mathcal{M}(\bP) = \{ \bxi \in \bbR^{n \times m} \mid \Vert\bh_{\bxi}-\bP\Vert \textrm{ is bounded}\},\]
\[\mathcal{P}(\bP) = \{ \boldsymbol{Q} \in \cT_{m,n} \mid \Vert\boldsymbol{Q}-\bP\Vert  \textrm{ is bounded}\}. \] 

For example, $\mathcal{M}(\boldsymbol{0})$ is the set of all matrices $\bxi \in \bbR^{n \times m}$ such that its successive minima function $\bh_{\bxi}$ remains at bounded distance from $\boldsymbol{0}$. That is, each of its successive minima is bounded. Note that this finite bound can be arbitrarily large. We easily observe a characterization of the set of badly approximable matrices $\Bad_{n,m}$ in this context.

\begin{prop}\label{charbad}
For any dimension $n\ge1$ and $m\ge1$, 
\[ \Bad_{n,m} = \mathcal{M}(\boldsymbol{0}). \]
\end{prop}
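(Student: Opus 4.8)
\textbf{Plan of proof of Proposition~\ref{charbad}.}
The plan is to prove the two inclusions $\Bad_{m,n} \subseteq \mathcal{M}(\boldsymbol{0})$ and $\mathcal{M}(\boldsymbol{0}) \subseteq \Bad_{m,n}$ separately, using the translation between bad approximability and the behaviour of $h_{\bxi,1}$ already recorded in this section, namely that (A) is equivalent to property (a): $h_{\bxi,1}(q)$ remains at bounded distance from $0$. The key formula is
\begin{equation}\label{hp_i}
h_{\bxi,1}(p_i) = \frac{1}{m+n} \log\left( X_i^m L_i^n\right),
\end{equation}
together with the fact that $p_i$ are precisely the locations of the local minima of $h_{\bxi,1}$, and that the local maxima are attained at the $q_i$ with $h_{\bxi,1}(q_i) = \frac{1}{m+n}\log\left(X_{i+1}^m L_i^n\right)$.

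First I would argue $\Bad_{m,n} \subseteq \mathcal{M}(\boldsymbol{0})$. Suppose $\bxi \in \Bad_{m,n}$, so there is $c>0$ with $\|\bx\|^m L_{\bxi}(\bx)^n \ge c$ for all nonzero $\bx \in \bZ^{m+n}$; applying this to the best approximation vectors gives $X_i^m L_i^n \ge c$, hence by \eqref{hp_i} the values $h_{\bxi,1}(p_i)$ are bounded below, say $h_{\bxi,1}(p_i) \ge \frac{1}{m+n}\log c$. Since the $p_i$ are the local minima of the piecewise linear function $h_{\bxi,1}$, this bounds $h_{\bxi,1}$ from below on all of $[p_1,\infty)$. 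For the upper bound one uses Minkowski's Second Theorem: $h_{\bxi,1}(q) + \cdots + h_{\bxi,m+n}(q) = O(1)$ together with $h_{\bxi,1}(q) \le \cdots \le h_{\bxi,m+n}(q)$ forces $h_{\bxi,1}(q) \le O(1)$ for all $q$. Combining the two bounds, $\|\bh_{\bxi}\|$ is bounded (each $h_{\bxi,d}$ lies between $h_{\bxi,1}$ and the sum, both of which are now $O(1)$), that is $\bxi \in \mathcal{M}(\boldsymbol{0})$.

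For the reverse inclusion $\mathcal{M}(\boldsymbol{0}) \subseteq \Bad_{m,n}$, suppose $\|\bh_{\bxi} - \boldsymbol{0}\|$ is bounded, in particular $|h_{\bxi,1}(q)| \le M$ for all $q \ge 0$ and some $M>0$. I want to deduce $\|\bx\|^m L_{\bxi}(\bx)^n \ge c$ for every nonzero $\bx \in \bZ^{m+n}$, with $c = c(M,m,n)$. Given such a nonzero $\bx$, recall from \eqref{defgauge} that $h_\bx(q) = \max(\log\|\bx\| - nq, \ \log L_{\bxi}(\bx) + mq)$, a function with slope $-n$ then slope $m$ whose minimum value equals $\frac{1}{m+n}\log\left(\|\bx\|^m L_{\bxi}(\bx)^n\right)$, attained at the breakpoint $q_\bx = \frac{1}{m+n}\log\left(\|\bx\|/L_{\bxi}(\bx)\right)$. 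Since $\bx \in \bZ^{m+n}\setminus\{\boldsymbol{0}\}$ we have $h_{\bxi,1}(q) \le h_\bx(q)$ for all $q$ by \eqref{h1}; evaluating at $q = q_\bx$ (which is $\ge 0$, up to an easy separate check for the finitely many vectors with $q_\bx < 0$, or a bounded shift) gives
\[
\frac{1}{m+n}\log\left(\|\bx\|^m L_{\bxi}(\bx)^n\right) = h_\bx(q_\bx) \ge h_{\bxi,1}(q_\bx) \ge -M,
\]
hence $\|\bx\|^m L_{\bxi}(\bx)^n \ge e^{-(m+n)M} =: c > 0$. Therefore $\bxi \in \Bad_{m,n}$.

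The only mildly delicate point — and the step I would treat most carefully — is the bookkeeping around the parameter being nonnegative and around the finitely many "small" vectors: $h_{\bxi,1}$ and the templates are defined for $q>0$, so one must check that the argument $q_\bx$ at which one tests $\bx$ against $h_{\bxi,1}$ can be taken in the valid range, or absorb the finitely many exceptional $\bx$ (those with $\|\bx\|$ bounded) into the constant $c$ directly. Everything else is a direct unwinding of the dictionary in Subsection~\ref{123ss} together with Minkowski's Second Convex Body Theorem, so no genuine obstacle is expected.
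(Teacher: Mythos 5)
Your proof is correct and follows essentially the same route as the paper's, which simply records the chain of equivalences $(A) \Leftrightarrow (a) \Leftrightarrow (\text{all } h_{\bxi,j} \text{ bounded}) \Leftrightarrow \bxi \in \cM(\boldsymbol{0})$, the first already established in Subsection~\ref{123ss} and the second by Minkowski's second theorem; you unwind these into two inclusions and re-derive $(A)\Leftrightarrow(a)$ in detail rather than citing it. One small imprecision worth tightening: in the forward inclusion, $h_{\bxi,d}$ does not literally lie between $h_{\bxi,1}$ and the sum — the correct chain is $h_{\bxi,1}\le h_{\bxi,d}\le h_{\bxi,m+n}\le \bigl(\sum_j h_{\bxi,j}\bigr)-(m+n-1)h_{\bxi,1}$, which is $O(1)$ once both $h_{\bxi,1}$ and the sum are $O(1)$.
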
 

For our purpose, we reduce the \emph{variational principle} as follows (See \cite{David} for more).

\begin{maintheorem}[Das, Fishman, Simmons, Urba\'nski \cite{David}] \label{VarPrinc}
Let $\bP\in \cT_{m,n}$ be a template defined on $[Q_1, \infty)$. Furthermore,
\begin{equation}
\dim_H \mathcal{M}(\mathcal{P}(\bP))  = \sup_{\boldsymbol{Q}\in\mathcal{P}(\bP)} \underline{\delta}(\boldsymbol{Q}),\end{equation}
\begin{equation}
\dim_P \mathcal{M}(\mathcal{P}(\bP))  = \sup_{\boldsymbol{Q}\in\mathcal{P}(\bP)} \overline{\delta}(\boldsymbol{Q}).\end{equation}
\end{maintheorem}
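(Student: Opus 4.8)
The statement to prove is Theorem \ref{VarPrinc}, the variational principle of Das--Fishman--Simmons--Urba\'nski. Strictly speaking this is quoted from \cite{David} as a ``main theorem,'' so one would not reprove it from scratch; but a proof proposal sketching the architecture of such a result is still meaningful. Here is the plan.

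\medskip

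\textbf{Plan of proof (following \cite{David}).} The argument splits into a lower bound and an upper bound for each of $\dim_H$ and $\dim_P$, together with the non-emptiness assertion. First I would establish non-emptiness and the lower bounds by an explicit construction: given a template $\bP$ (or a competitor $\boldsymbol{Q} \in \mathcal{P}(\bP)$) defined on $[Q_1,\infty)$, one builds a Cantor-type subset of $\bbR^{n\times m}$ whose elements have successive minima functions $\bh_{\bxi}$ shadowing $\bP$ at bounded distance. At each scale $q$ one subdivides the current family of admissible lattices/matrices according to the combinatorics of $\bP$ near $q$, and the branching rate of this Cantor scheme at parameter $q$ is governed precisely by the local contraction rate $\delta(\bP,q)$ — this is the \emph{raison d'\^etre} of that quantity. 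Summing (integrating) the branching exponents over $[Q_1,q]$ and passing to the limit gives a lower bound of $\underline{\delta}(\boldsymbol{Q})$ for the Hausdorff dimension and $\overline{\delta}(\boldsymbol{Q})$ for the packing dimension of the piece of $\mathcal{M}(\mathcal{P}(\bP))$ shadowing $\boldsymbol{Q}$; taking the supremum over $\boldsymbol{Q}\in\mathcal{P}(\bP)$ yields the claimed lower bounds. The technical heart here is a \emph{mass distribution} / Frostman-type argument: one puts a measure on the Cantor set, controls its local dimension from below using the branching rates, and invokes the mass distribution principle for $\dim_H$ (and its packing-dimension analogue for $\dim_P$).

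\medskip

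\textbf{Upper bound.} For the upper bound one argues that \emph{every} $\bxi$ with $\bh_{\bxi}$ at bounded distance from $\mathcal{P}(\bP)$ actually shadows some template $\boldsymbol{Q}\in\mathcal{P}(\bP)$ (Roy's theorem, resp. its $\min(m,n)>1$ extension in \cite{David}, guarantees $\bh_{\bxi}$ is itself within bounded distance of a template), and then one covers $\mathcal{M}(\mathcal{P}(\bP))$ efficiently. The covering at scale $q$ uses only finitely many ``tubes'' around integer points forced by the template combinatorics, and the count of these tubes at parameter $q$ is again controlled by $\delta(\bP,q)$, hence by $\overline\delta$ (for an $\limsup$-type covering giving $\dim_P$) and by $\underline\delta$ along a well-chosen subsequence of scales (giving $\dim_H$). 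This is a compactness-plus-counting argument; one must be careful that the bounded-distance ``fattening'' only changes multiplicative constants in the covering counts and not the exponential growth rate.

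\medskip

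\textbf{Main obstacle.} The genuinely hard part — and the reason \cite{David} is a long paper — is making the local contraction rate $\delta(\bP,q)$ the exact matching quantity in \emph{both} directions simultaneously for \emph{all} $m,n$, not just $\min(m,n)=1$. In the classical Schmidt--Summerer--Roy setting the successive-minima combinatorics is essentially one-dimensional, but for general $m,n$ the convex bodies $\cC_\bxi(q)$ deform in an $mn$-dimensional family and the flag of sublattices realizing the successive minima can jump in complicated ways; reconciling the ``how many children'' count in the construction with the ``how many tubes'' count in the covering requires the precise definition of $\delta$ via the $GL$-action on flags (Definition 4.5 of \cite{David}) and a delicate analysis of how $\delta$ behaves under the template axioms (iii)--(iv). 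For our applications, though, we only ever use the trivial evaluation \eqref{Delta0}, namely $\delta(\bP,q)=mn$ on intervals where $\bP\equiv\boldsymbol 0$, together with the upper bound $\delta(\bP,q)\le mn$; so we invoke Theorem \ref{VarPrinc} as a black box and do not engage with this obstacle directly.
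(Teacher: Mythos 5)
The paper gives no proof of Theorem~\ref{VarPrinc}: it is imported verbatim from \cite{David} and used as a black box, exactly as you recognize. Your sketch is a fair high-level gloss of the strategy in \cite{David} (Cantor-type construction with a mass-distribution argument for the lower bounds, counting/covering for the upper bounds, with the local contraction rate $\delta$ as the matching quantity, and the genuine difficulty being the extension beyond $\min(m,n)=1$), and your closing remark that the present paper only ever needs \eqref{Delta0} and the bound $\delta\le mn$ correctly describes how the theorem is actually invoked here. Since there is no in-paper proof to compare against, nothing further is required.
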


{ In particular, note that the set $\mathcal{M}(\bP)$ is not empty whenever $\sup_{\boldsymbol{Q}\in\mathcal{P}(\bP)} \overline{\delta}(\boldsymbol{Q})>0$, but might be empty if the latter is $0$. \\}

The next proposition generalises the discussion by Schmidt in \cite[\S 5]{S}.
\begin{prop}\label{prop}
Let (k) be one of the properties (a), (b) or (c). Suppose that $\bP\in \cT_{m,n}$ is a $(m,n)$-template { satisfying (k), such that there exists a constant $\kappa >0$ so that each maximal interval of linearity of $\bP$ has length at least $\kappa$. Then, }any $(m,n)$-template in $\cP(\bP)$ also has property (k) and Theorem \ref{VarPrinc} provides a set of $\bxi\in\bbR^{n \times m}$ satisfying (k) of Hausdorff dimension at least $\underline{\delta}(\bP)$ and Packing dimension at least $\overline{\delta}(\bP)$.
\end{prop}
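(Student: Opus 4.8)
The plan is to reduce the statement to the translation dictionary established in Subsection \ref{123ss} together with the variational principle (Theorem \ref{VarPrinc}). First I would recall that properties (a), (b), (c) are each expressed as a uniform bound on some geometric feature of the graph of the first minimum function: (a) says $h_{\bxi,1}$ stays within bounded distance of $0$, (b) says the slope-$m$ intervals $[p_i,q_i]$ have bounded length, and (c) says the slope-$(-n)$ intervals $[q_i,p_{i+1}]$ have bounded length. For a template $\bP$, exactly the same three conditions make sense verbatim, applied to the function $P_1$ in place of $h_{\bxi,1}$ (using Remark \ref{rmktpq} to assume, without loss of generality, that the slopes of $\bP$ lie in $\{m,-n\}$, so that the sequence \eqref{defpiqi} is defined for $\bP$). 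The first task is therefore purely combinatorial: if $\bP$ satisfies (k) and $\boldsymbol{Q}\in\cP(\bP)$, i.e.\ $\Vert \boldsymbol{Q}-\bP\Vert\le M$ for some finite $M$, then $\boldsymbol{Q}$ satisfies (k) as well. For (a) this is immediate from $|Q_1(q)|\le |P_1(q)|+M$. For (b) and (c) one argues that a uniform $C^0$-bound between two piecewise-linear functions with the same two admissible slopes forces their "turning points" to be within a bounded horizontal distance of one another; hence an interval on which $P_1$ has slope $m$ of length $\ge \ell$ can only be shortened or lengthened by an amount controlled by $M$ (divided by the slope gap $m+n$), so bounded-length slope-$m$ intervals for $\bP$ translate into bounded-length slope-$m$ intervals for $\boldsymbol{Q}$, and symmetrically for slope $-n$. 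The hypothesis that $\bP$ is linear on intervals of length at least $\kappa>0$ is what makes this horizontal comparison uniform and rules out pathological accumulation of breakpoints.

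Next I would invoke Theorem \ref{VarPrinc}: since $\bP\in\cT_{m,n}$ is a template on $[Q_1,\infty)$, the set $\mathcal{M}(\cP(\bP))$ is non-empty, with $\dim_H \mathcal{M}(\cP(\bP)) = \sup_{\boldsymbol{Q}\in\cP(\bP)}\underline{\delta}(\boldsymbol{Q}) \ge \underline{\delta}(\bP)$ and $\dim_P \mathcal{M}(\cP(\bP)) = \sup_{\boldsymbol{Q}\in\cP(\bP)}\overline{\delta}(\boldsymbol{Q}) \ge \overline{\delta}(\bP)$. It remains to check that every $\bxi\in\mathcal{M}(\cP(\bP))$ has property (k) in the sense of the original definition, i.e.\ for the true successive minima function $\bh_{\bxi}$. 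By definition of $\mathcal{M}(\cP(\bP))$, any such $\bxi$ satisfies $\Vert\bh_{\bxi}-\boldsymbol{Q}\Vert$ bounded for some $\boldsymbol{Q}\in\cP(\bP)$, and we have just seen that such $\boldsymbol{Q}$ satisfies (k); applying the same $C^0$-comparison argument once more (now between $\bh_{\bxi}$ and $\boldsymbol{Q}$, using that $h_{\bxi,1}$ is itself piecewise linear with slopes in $\{m,-n\}$, and that $\boldsymbol{Q}$ is linear on intervals of length bounded below — which follows from $\bP$ being so and $\boldsymbol{Q}$ being a bounded perturbation within $\cT_{m,n}$) shows $\bh_{\bxi}$ satisfies (k). Translating back through the dictionary of Subsection \ref{123ss} — $|q_i-p_i|\ll1 \iff \sup X_{i+1}/X_i<\infty$, etc.\ via \eqref{ssyq}, and bounded distance of $h_{\bxi,1}$ from $0$ $\iff X_i^m L_i^n \gg 1$ — gives that $\bxi$ has property (A), (B) or (C) accordingly. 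Thus the set of $\bxi$ satisfying (k) contains $\mathcal{M}(\cP(\bP))$, which has the claimed Hausdorff and packing dimension lower bounds.

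The main obstacle is the uniformity in the horizontal comparison of breakpoints for properties (b) and (c): one must be careful that a small vertical perturbation does not, in principle, split a single long linear piece into many short ones of the forbidden slope, or conversely merge pieces so as to create an arbitrarily long slope-$m$ (resp.\ slope-$(-n)$) segment out of nothing. The hypothesis that $\bP$ is linear on intervals of length at least $\kappa$, combined with the fact that templates have only the two extremal slopes $m$ and $-n$ (Remark \ref{rmktpq}), confines the damage: between consecutive breakpoints of $\bP$ the function $\boldsymbol{Q}$ can change slope at most boundedly often within a $\kappa$-window, and the amplitude $M$ controls how far such an induced breakpoint can sit from the corresponding breakpoint of $\bP$. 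Making this quantitative — essentially tracking that $|(\text{breakpoint of }\boldsymbol{Q})-(\text{breakpoint of }\bP)|\le M/(m+n)$ whenever the breakpoints are "of the same type" and no other breakpoint intervenes — is the one place where some care, though no deep idea, is required.
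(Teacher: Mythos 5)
Your proposal follows the paper's approach: the paper delegates the key stability step to Lemma~\ref{KeyLemma}, whose statements (2) and (3) formalize exactly the estimate you sketch --- on a long interval $I$ where $Q_1$ has the extremal slope, the difference $Q_1-P_1$ has one-signed slope, its total variation over $I$ is at most $2M$, so the total length where $P_1$ has the opposite slope inside $I$ is at most $2M/(m+n)$; the hypothesis that $\bP$ is linear on intervals of length at least $\kappa$ (Condition U) then caps the number of such pieces and hence $|I|$. Statement~(1) is your triangle-inequality observation for~(a), and the passage to the set of $\bxi$ and its dimension is by Theorem~\ref{VarPrinc} exactly as you write, using $\bP\in\cP(\bP)$.

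One claim in your write-up is however false and should be removed: you assert that ``$\boldsymbol{Q}$ is linear on intervals of length bounded below --- which follows from $\bP$ being so and $\boldsymbol{Q}$ being a bounded perturbation within $\cT_{m,n}$.'' This does not follow: a bounded $C^0$-perturbation of $P_1$ may zigzag arbitrarily fast inside a long linear piece of $P_1$ (the $C^0$-bound controls only the total vertical excursion, not the number of breakpoints), so $Q_1$ need not satisfy Condition~U with any positive constant. This matters because Lemma~\ref{KeyLemma}(2)--(3) require the $\kappa$-condition on the \emph{reference} function $F$, and you propose to compare $\bh_{\bxi}$ against $\boldsymbol{Q}$ taking $\boldsymbol{Q}$ as that reference. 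The fix is immediate and is the one the paper implicitly uses: by the triangle inequality, $\|\bh_{\bxi}-\bP\|\le\|\bh_{\bxi}-\boldsymbol{Q}\|+\|\boldsymbol{Q}-\bP\|<\infty$, so one applies Lemma~\ref{KeyLemma} directly with $F=P_1$ (which does satisfy Condition~U by hypothesis) and $G=h_{\bxi,1}$, bypassing $\boldsymbol{Q}$ in this step entirely. (Alternatively, $h_{\bxi,1}$ always satisfies Conditions V$_1$ and V$_2$ by Remark~\ref{rema11}, but these apply in the wrong direction here; the direct comparison with $\bP$ is the clean route.)
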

 Here, we abuse notation and say that $\bP$ satisfies (k) if its first component $P_1$ satisfies the same condition as $h_{\xi,1}$ in (k).\\
 
Hence, existence of $\bxi\in \bbR^{n \times m}$ claimed in Theorem$^\star$~\ref{Thm-} and Theorem$^\star$~\ref{Main} follows from the (explicit) construction of templates, where intervals of linearity are of length not shrinking to $0$, satisfying or not conditions (a), (b) or (c), and condition on the supremum and infimum limit of $P_1(q)/q$ stemming from condition on the ordinary or uniform exponent (see \eqref{LimPOmega}). Lower bounds for Hausdorff and packing dimension are induced for free by Theorem \ref{VarPrinc}.\\

The next Section \ref{noRoy} is devoted to the proof of all our claimed results that do not require the strength of Theorem \ref{VarPrinc}, namely Theorem \ref{Thm+}, Theorem \ref{nm3}, Theorem \ref{OptiOrdi} and Theorem \ref{optimn}. In Section \ref{Construction}, we construct $(m,n)$-templates proving Theorem$^\star$~\ref{Thm-}, Theorem$^\star$~\ref{ThmOrd} and Theorem$^\star$~\ref{Main}. We finish this section with the proof of Propositions \ref{charbad} and Proposition \ref{prop}.\\
 
 \subsection{Proof of Propositions \ref{charbad} and \ref{prop}} \label{123ssss}   
 
Proposition \ref{charbad} is given easily by the chain of equivalences
\begin{eqnarray*}
(A) \; \; \; \; \bxi\in \textrm{Bad}_{n,m} &\Leftrightarrow& (a) \; \; \; \; h_{\bxi,1} \textrm{ remains at bounded distance from } 0 \\
&\Leftrightarrow& \textrm{for all } 1\le j \le m+n, \; h_{\bxi,j} \textrm{ remains at bounded distance from } 0\\
&\Leftrightarrow& \bh_{\bxi} \in \mathcal{M}(\boldsymbol{0})
\end{eqnarray*} 

The second equivalence follows from Minkowski's Second Convex Body Theorem.\qed\\

Note that it is also equivalent to being badly approximable for approximation by rational subspaces, see Proposition 1.1 from \cite{Yorkies}, where the second equivalence is discussed in details.\\
 
The proof of Proposition \ref{prop} relies on the next Lemma~\ref{KeyLemma}. Before stating it, we need some extra notation. Consider the set $\cF (S)$ of piecewise linear functions $F =F(q)$ with a finite number of admissible slopes $S$, with finitely many pieces on any subinterval not containing $0$ or $\infty$. Denote the extremal slopes
 $$s_{\min} = \min_{s\in S} s \,\,\, \textrm{ and }\,\,  s_{\max} = \max_{s\in S} s.$$ 
Define
$$\mu = \min_{s_1, s_2 \in S: s_1\neq s_2}|s_1-s_2|.$$
 
Let $  F\in \cF (S)$, denote by $|\cdot|$ the length of an interval. We consider three conditions. 
 
 \vskip+0.3cm
  \noindent 
{\rm \textbf{ Condition U}}.  There exists $K>0$ and $\kappa >0$ with the following properties :
If $F$ is linear on the segment $J \subset [K, +\infty)$ but is not linear on any segment $ J' \supset J$ then $|J|>\kappa$.

  \vskip+0.3cm
 \noindent 
{\rm \textbf{ Condition V$_1$}}.
There exists $k_*\in \mathbb{N}$ and $\kappa >0$ with the following properties: 
    Split the interval $[1,+\infty)$ into the finite or infinite union
\[ [1,+\infty)= 
I_1 \cup J_1\cup I_2\cup J_2 \cup ... \cup  I_\nu\cup J_\nu \cup ...\, , \,\,\,\,
I_\nu = [u_\nu, v_\nu],\,\, J_\nu = [v_\nu, u_{\nu+1}],
\]
where $F$ has slope $s=s_{\max}$ on $I_i$ and slope $s <s_{\max}$ on $J_j$. 
Then for any $j$ there exists $j_*  \in \{ j, j+1,...,j+k_*\}$ such that 
 $|J_{j_*}|>\kappa$.
\vskip+0.3cm

 \noindent 
{\rm \textbf{ Condition V$_2$}}.
There exists $k_*\in \mathbb{N}$ and $\kappa >0$ with the following properties:
 Split the interval $[1,+\infty)$ into the finite or infinite union
\[ [1,+\infty)= 
I_1 \cup J_1\cup I_2\cup J_2 \cup ... \cup  I_\nu\cup J_\nu \cup ...\, , \,\,\,\,
I_\nu = [u_\nu, v_\nu],\,\, J_\nu = [v_\nu, u_{\nu+1}],
\]
where $F$ has slope $s=s_{\min}$ on $I_i$ and slope $s>s_{\min}$ on $J_j$. 
Then for any $j$, there exists $j_*  \in \{ j, j+1,...,j+k_*\}$ such that 
 $|J_{j_*}|>\kappa$.\\

 \begin{lemma}\label{KeyLemma}
Consider any functions $F,G \in \cF(S)$ such that $ \sup_q|F(q) -G(q)| \le c$.
\vskip+0.3cm
  \noindent
  {\rm (1)}
 For an arbitrary function $f(q)$ one has 
$$|f(q) - F(q)| \ll1, \quad \forall q>0 \quad  \Leftrightarrow \quad |f(q) - G(q)| \ll1 , \quad \forall q>0 .$$

\vskip+0.3cm
  \noindent
  {\rm (2)}
  Assume that $F\in \cF$ satisfies  condition
{\rm \textbf{ Condition U}} or {\rm \textbf{ Condition V$_1$}}. Then
 if $F$ has slope $s_{\max}$ only on intervals of length bounded by $\ell$, then $G$ has slope $s_{\max}$ only on intervals of length
 bounded by a constant depending on $\ell, \mu, \kappa, k_*$ and $c$.
 
\vskip+0.3cm
  \noindent
  {\rm (3)} 
    Assume that $F\in \cF$ satisfies  condition
{\rm \textbf{ Condition U}} or {\rm \textbf{ Condition V$_2$}}. Then
  if $F$ has slope $s_{\min}$ only on intervals of length bounded by $\ell$, then  $G$ has slope $ s_{\min}$ only on intervals of length bounded by a constant depending on $\ell, \mu, \kappa, k_*$ and $c$.
 
 \end{lemma}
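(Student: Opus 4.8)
The plan is to treat the three statements in order, using elementary geometry of piecewise-linear functions with bounded slope set. For part (1), note that if $|f-F|\ll 1$ everywhere and $\sup_q|F-G|\le c$, then by the triangle inequality $|f-G|\le |f-F|+|F-G|\ll 1$ everywhere; the converse is symmetric. This is immediate and requires no further work; I would dispose of it in one line.

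For part (2), I would argue by contradiction: suppose $F$ has slope $s_{\max}$ only on intervals of length $\le \ell$, but $G$ has slope $s_{\max}$ on a very long interval $[a,b]$ with $b-a$ large (to be chosen larger than any bound in $\ell,\mu,\kappa,k_*,c$). The key observation is a \emph{separation-of-slopes} estimate: since every slope of $F$ other than $s_{\max}$ differs from $s_{\max}$ by at least $\mu$, on any subinterval where $G$ climbs at rate $s_{\max}$ the function $F$, staying within $c$ of $G$, cannot afford to use a non-maximal slope for total length more than $\approx 2c/\mu$ before it falls too far below $G$; conversely it cannot use slope $s_{\max}$ for length more than $\ell$ before it is forced to change, and each such change (down and back up) costs it a definite amount of "vertical budget" relative to the line of slope $s_{\max}$ that $G$ follows. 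Quantitatively, on $[a,b]$ the graph of $G$ lies on a single line $\Lambda$ of slope $s_{\max}$; the graph of $F$ stays in the strip of vertical width $2c$ around $\Lambda$; but $F$, being constrained to pieces of slope $s_{\max}$ of length $\le \ell$ separated by pieces of strictly smaller slope, must on each "period" lose at least (length of the small-slope piece)$\times \mu$ relative to $\Lambda$ and can only regain it on a piece of length $\le \ell$ at rate... wait, slope $s_{\max}$ pieces keep pace with $\Lambda$, so $F$ can never recover — once it drops below $\Lambda$ by more than $2c$ it violates the bound. Here is where Condition U (the small-slope gaps have length $\ge\kappa$, so each period $F$ drops by at least $\mu\kappa$) or Condition V$_1$ (among any $k_*$ consecutive small-slope gaps one has length $\ge\kappa$, so every $k_*$ periods $F$ drops by at least $\mu\kappa$) is exactly what forces an unbounded drop over a sufficiently long interval, contradicting $\sup|F-G|\le c$ once $b-a$ exceeds an explicit bound in $\ell,\mu,\kappa,k_*,c$. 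Running the numbers, the required length is of order $(\ell+\kappa)k_*\cdot(2c/(\mu\kappa)+1)$ or similar, which is the promised constant.

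Part (3) is the mirror image of part (2): one swaps the roles of $s_{\max}$ and $s_{\min}$, replaces Condition V$_1$ by Condition V$_2$, and replaces "$F$ drops below $\Lambda$" by "$F$ rises above $\Lambda$", where now $\Lambda$ is the line of slope $s_{\min}$ that $G$ tracks on the hypothetical long interval. The same separation-of-slopes budget argument applies verbatim. I would simply state that part (3) follows from part (2) by the reflection $q\mapsto -q$ (or by reversing all inequalities), and omit the repetition.

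The main obstacle is the bookkeeping in part (2): making precise the claim that a function pinned to a bounded-slope "sawtooth" with slope-$s_{\max}$ teeth of bounded length and slope-$(<s_{\max})$ valleys of length bounded below (or bounded below along subsequences, under V$_1$) must drift an unbounded vertical distance from any line of slope $s_{\max}$. The delicate point is that $F$ is not assumed to have slope exactly $s_{\max}$ on the teeth — only $\le s_{\max}$, with the only slopes available lying in the finite set $S$ — so I must track the net vertical displacement of $F$ relative to $\Lambda$ over one full period and show it is bounded above by a strictly negative quantity $-\mu\kappa$ (resp. $-\mu\kappa/k_*$ on average under V$_1$), which is where $\mu$ (separation of slopes) and $\kappa$ (minimum valley length) both enter. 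Once this per-period estimate is in hand, summing over $\lceil (b-a)/(\ell+\kappa)\rceil$ periods and comparing with the uniform bound $2c$ closes the argument.
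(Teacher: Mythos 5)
Your proposal is correct and follows essentially the same strategy as the paper's proof: part (1) by triangle inequality; part (2) by contradiction, observing that on a long interval where $G$ has slope $s_{\max}$ the difference $G-F$ has only nonnegative slopes (zero on $F$'s $s_{\max}$-pieces, $\geq\mu$ on the others), and then invoking Condition U or V$_1$ to turn this into an explicit bound on the interval length; part (3) by the obvious symmetry. The only difference is bookkeeping: the paper first bounds the total valley length $\sum|J_j|\le 2c/\mu$, then uses the condition to bound the \emph{number} of valleys, and finally concludes some tooth $I_i$ must have length $\ge(|I|-2c/\mu)/W>\ell$; you instead phrase it as the cumulative drop of $F$ below $\Lambda$ exceeding $2c$, after enough periods (each of which has length $\le\ell+2c/\mu$, so a long $|I|$ forces many periods). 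These are dual formulations of the same budget argument and give bounds of the same order. One detail your sketch glosses over, and which the paper handles explicitly, is that the first and last valley in $I$ may be truncated and hence are not complete $J_\nu$ from Conditions U/V$_1$; this is why the paper's estimate excludes them ($j_0-2$ in place of $j_0$). Your ``or similar'' suggests you are aware the exact constant requires care; what is needed is to use $2c/\mu$ (the maximal single-valley length) rather than $\kappa$ when bounding the period length.
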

 We must supply Lemma ~\ref{KeyLemma} with an important remark.
   \begin{remark}\label{rema11}
Consider a family $ \cF$ with  $s_{\max} = m $ and $s_{\min}=-n$. Then $\{ -n,m\} \subset S$. For any matrix $\bxi$, its successive minima function $h_{\bxi,1} $ satisfies  both  {\rm \textbf{ Conditions V$_1$,  V$_2$}}.
  Indeed,  $h_{\bxi,1}\in  \cF$ have only two changing  slopes $-n$ and $m$.
  For the length of the intervals of linearity for  the function $h_{\bxi,1}$, we have formulas 
 (\ref{ssyq}) which express the lengths in terms of ratios of best approximations 
 $\frac{X_{i+1}}{X_i}$ and $ \frac{L_{i+1}}{L_i}$.
  It is well-known (see \cite{MMJ,Che,NM}) that sequences of values $X_i$ and $L_i$ have exponential growth and decay, that is 
 $$
 X_{i+k_1} \ge 2 X_i,\,\,\,\,
 L_{i+k_2} \le \frac{L_i}{2},\,\,\,\, \forall \, i\in\bN
 $$
 for fixed positive integers $k_1, k_2$. So \textbf{Condition V$_1$} and \textbf{Condition V$_2$} are satisfied
 with $k_* = k_1, \kappa = \frac{\log 2}{k_1(m+n)}$ {or}  $k_*=k_2, \kappa = \frac{\log 2}{k_2(m+n)}$.\\
\end{remark}

{
\begin{proof}[Proof of Proposition ~\ref{prop} ]
Since $\bP$ is assumed to satisfy \textbf{ Condition U}, when (k) is respectively (a), (b) or (c) we apply respectively (1), (2) or (3) from Lemma ~\ref{KeyLemma} with $f(q):=0$, $F:=\bP$ and $G:=h_{\bxi}$ for any $\bxi\in \cM(\bP)$. The information about Hausdorff and Packing dimensions follows directly from
\begin{equation*}
\dim_H \mathcal{M}(\mathcal{P}(\bP))  = \sup_{\boldsymbol{Q}\in\mathcal{P}(\bP)} \underline{\delta}(\boldsymbol{Q}) \ge \underline{\delta}(\boldsymbol{P}),\end{equation*}
\begin{equation*}
\dim_P \mathcal{M}(\mathcal{P}(\bP))  = \sup_{\boldsymbol{Q}\in\mathcal{P}(\bP)} \overline{\delta}(\boldsymbol{Q}) \ge  \overline{\delta}(\boldsymbol{P}).\end{equation*}
\end{proof}}

\begin{proof}[Proof of Lemma ~\ref{KeyLemma} ]

Statement (1) follows directly from triangle inequality.\\

We prove  Statement (2).
 Suppose that $G$ has slope $s=s_{\max}$ on arbitrarily large intervals. Consider an arbitrarily large (but finite) interval $I=[p,q]$ such that $G$ has slope $s=s_{\max}$. By maximality of $s_{\max}$ on $I$, the slopes of the piecewise linear function $ G - F$ are everywhere non-negative. Split the interval $I$ into the disjoint union
\[ I = \bigcup_{i=1}^{i_0}I_i \cup \bigcup_{j=1}^{j_0}J_j   \]
where $F$ has slope $s=s_{\max}$ on $I_i$ and slope $s<s_{\max}$ on $J_j$. 

By hypothesis,
  $G - F$ has slope $s=0$ on $I_i$ and slope $s\ge \mu$ on $J_j$. Note that $|i_0-j_0|\le 1$.  It follows
  from the conditions of the lemma  that
$$
2c \ge {(G(q) - F(q)) - ( G(p) - F(p))} \ge \mu \sum_{j=1}^{j_0} |J_j| \ge  
\begin{cases}
\mu \kappa (j_0-2), \,\,\,\text{under  {\rm \textbf{ Condition U}}},
\cr
\mu\kappa \left[\frac{j_0-2}{k_*}\right],\,\,\,\text{under  {\rm \textbf{ Condition V$_1$}}},
 \end{cases} 
  $$
  Here, we exclude the very left and the very right intervals $J_\nu$ from the consideration because they may not be the complete intervals $J_\nu$ from  {\rm \textbf{Conditions U, V$_1$}}. So  
$$i_0, j_0 \le  W=
\begin{cases}
\frac{2c }{\mu\kappa}+3 ,  \,\,\,\,\,\,\,\,\,\,\,\,\,\,\,\,\,\,\,\,\,\,\,\,\,\,\,\,\,\,\,\,\,\,\,\,\,\,\,\,\,\,\,\,\,\,\,\,\,\,\,\,\,\,
\text{under  {\rm \textbf{ Condition U}}},
\cr
\max \left(j_*, \left(
\frac{2c }{\mu\kappa} +1\right) k_* \right)+3 ,\,\,\,\text{under  {\rm \textbf{ Condition V$_1$}}}.
 \end{cases} 
$$
 From $|I| = \sum_{i=1}^{i_0} |I_i| +\sum_{j=1}^{j_0} |J_j|$ we deduce that there exists $1\le i \le i_0$ such that
\[ |I_i| \ge
\frac{ |I| - \frac{2c}{\mu}}{W}
. \]
If $|I|$ was chosen large enough, it contradicts the upper  bound $\ell$ for  $|I_i|$. This proves (2).\\

Statement (3) is proved with analogous arguments. 
\end{proof}

 \subsection{Proof of Theorem \ref{normindep}}\label{Indepnorm}
 Lemma \ref{KeyLemma} allows to prove Theorem \ref{normindep}.
Here we explain how the boundedness of ratios for best approximations in sup-norm ensure the boundedness of ratios for the best approximations in arbitrary norm. The proof of the converse statement is similar.
 
Consider two norms $\|\cdot\|_{1}$ and $\|\cdot\|_{2}$, and define the modified convex body
 $$\cC'_{\bxi}(q) :=  \left\{  { \bx =  (\pmb{x},\pmb{y})^\top } \in \bbR^{m+n}  \mid \|\bx\|_1 \le e^{nq}, \quad \|\pmb{\xi}\pmb{x}-\pmb{y}\|_2  \le e^{-mq} \right\}.$$ 
Denote by $\bh_{\bxi}'$, $h_{\bx}'$ its associated minima and gauge functions. 
 If $\|\cdot\|_{1}$ and $\|\cdot\|_{2}$ are the sup norms, this is just the convex body we defined in \eqref{defconv}.
 By equivalence of the norms in $\bbR^{m+n}$ and \eqref{defgauge}, we obtain that for any $\bx$ the gauge functions $h_{\bx}'$ and $h_{\bx}$ remain at bounded distance. Hence by observation \eqref{h1} so do the successive minima functions $h_{\bxi,1}'$ and $h_{\bxi,1}$.
 Because of Remark \ref{rema11}
 we can apply Lemma \ref{KeyLemma}.
 and
 independence on the norm  follows.\qed
 
 The sequence of best approximations of course depends on the norm. Note that Roy in \cite{Roy} uses the Euclid{e}an norm, while Schmidt and Summerer \cite{SS1,SS2} use the sup norm.

 \section{Proofs not using the variational principle}\label{noRoy}
 In this section, we prove all our claimed results that do not require the strength of Theorem \ref{VarPrinc}, namely Theorem \ref{Thm+}, Theorem \ref{nm3}, Theorem \ref{OptiOrdi} and Theorem \ref{optimn}.
 
 \subsection{Proof of Theorem \ref{Thm+}} 
 \label{stpl}
 
 We provide here a new alternative proof in the language of the parametric geometry of numbers. See \cite{AM} for a more classical setting.\\
  We prove the implications of Theorem \ref{Thm+} in the same order. Remember that $\bxi\in \bbR^{n \times m}$ is totally irrational. 

\begin{itemize}
\item \emph{Suppose that (a) holds}, that is $|h_{\bxi,1}(q)|< \ell$ for some constant $\ell$ and all $q>0$. For $s\in\{ -n,m\}$ suppose that $h_{\bxi,1}$ has slope $s$ on the interval $(u,v)$, that is $$h_{\bxi,1}(v)=h_{\bxi,1}(u) + s(v-u).$$ Then 
 \[ |v-u|  = \frac{ \left|  h_{\bxi,1}(v) - h_{\bxi,1}(u) \right|}{|s|} \leq \frac{2\ell}{|s|}.\]
In particular, $h_{\bxi,1}$ can have slope $s=m$ on intervals of length at most $\frac{2\ell}{m}$ and slope $s=-n$ on intervals of length at most $\frac{2\ell}{n}$. We deduce that
 (a) implies both (b) and (c).\\ 
 
\begin{wrapfigure}{r}{0.5\textwidth}

\end{wrapfigure}

\item \emph{Suppose $m=1$ and $(b)$ holds}. By Schmidt and Summerer \cite{SS1,SS2} and Lemma \ref{KeyLemma} (2), there exists a template $P$ with slopes restricted to $\{1,-n\}$, at bounded distance to $h_\xi$, satisfying (b). That is (see Remark \ref{rmktpq}) there exists $\ell>0$ such that $|q_i-p_i| <\ell$. By Lemma \ref{KeyLemma} (1), it is enough to show that this $P$ satisfies $(a)$. For this, we uniformly bound $P_1(p_i) = \min_{[q_{i-1},q_i]}P_1(q)$. Consider $i\in\mathbb{N}$, and its associated interval $[q_{i-1},q_i]$. Recall that $P_{1+n}$ may have only slopes $-n$ and $1$. We consider two cases.\\

\textbf{ Case 1:} Suppose first that $P_{1+n}$ has constant slope $-n$ on a subinterval $[u_i,v_i]\subset [q_{i-1},q_i]$ where $u_i$ is minimal.
Since for each $q$, there is only one component $P_j$ with slope $-n$, we deduce 
$$p_i\le u_i \quad \textrm{ and } \quad q_i \ge u_i + \frac{P_{1+n}(u_i) - P_1(u_i)}{n+1}.$$
{ The second inequality follows from the observation that the line of slope $-n$ passing through $(u_i,P_{1+n}(u_i))$ lies below the line of slope $-n$ passing through $(q_i,P_1(q_i))$. See Figure ~\ref{Fig:Race}. Indeed, at each $q$ there is only one $P_i$ with negative slope. Condition (iv) on slopes in the Definition \ref{DefTemp} of a template says that at the turning point $q$ such that this unique negative slope changes from $P_i$ to $P_j$, we have either $j>i$ and the new line of negative slope is above the previous one, or $P_i(q)=P_j(q)$, meaning that we stay on the same line of negative slope. Note that if we have $m>1$ negative slopes, we can only infer that the lowest line with negative slope at some $q$ is below the lowest line with negative slope at some $p>q$, and our observation is not valid anymore.\\}
 By minimality of $u_i$, $P_{1+n} -P_1$ is constant on $[p_i,u_i]$, so $$P_{1+n}(p_i) - P_1(p_i)=P_{1+n}(u_i) - P_1(u_i),$$ and
$$u_i \le \nu_i := q_i - \frac{P_{1+n}(p_i) - P_1(p_i)}{n+1}.$$
Hence, recalling $P_{1+n} \ge 0 \ge P_1$
$$|P_1(p_i)| \le |P_{1+n}(p_i) - P_1(p_i)| \le (n+1)(q_i-u_i) \le \ell(n+1).$$
This proves the first case.\\

\textbf{Case 2:} Suppose now that $P_{1+n}$ has constant slope $1$ on $[q_{i-1},q_i]$.\\
Consider the index 
$$j:=\min\{j>i \mid P_{1+n} \textrm{ has slope $-n$ on a subinterval of } [q_{j-1},q_j] \}.$$ 
Since $\xi$ is supposed to be totally irrational, $j$ is well defined. By construction, 
$$P_{1+n}(p_i) = P_{1+n}(p_j) - (p_j-p_i) \le P_{1+n}(p_j)$$
and the index $j$ falls into the first case so that $|P_1(p_j)| \le \ell(n+1)$. Recall that $\frac{-P_1}{n} \le P_{1+n} \le -nP_1$, we deduce
$$\left| \min_{[q_{i-1},q_i]}P_1(q)\right| = |P_1(p_i)| \le n P_{1+n}(p_i) \le n P_{1+n}(p_j) \le n^2 |P_1(p_j)| \le \ell n^2(n+1).$$
This proves the second case.\\

Thus $(a)$ holds for $P$ and by Lemma \ref{KeyLemma} (1), also holds for $\bh_\bxi$ and $\bxi\in Bad_{1,n}$.\\

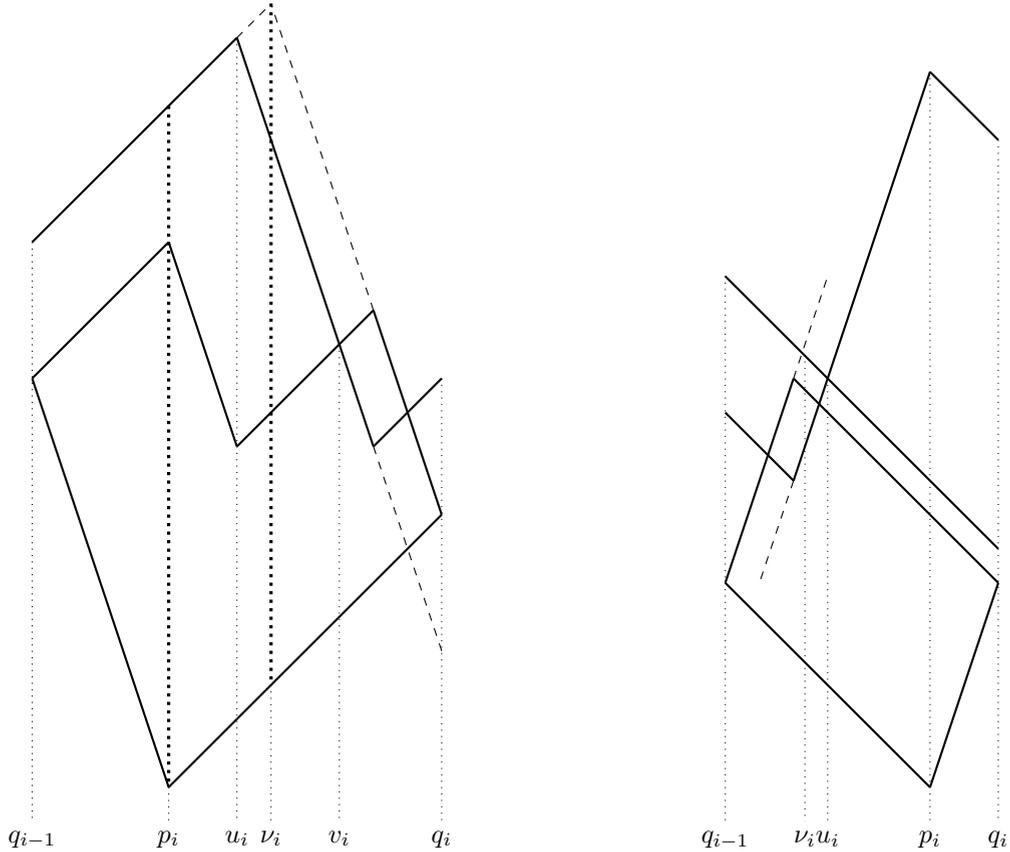
\begin{figure}[h]
\begin{tabular}{llr}
\begin{tikzpicture}[scale=0.45]
  
    \draw[thick] (0,0) -- (4,-12);
     \draw[thick] (0,0) -- (4,4);
     \draw[thick] (0,4) -- (4,8);
     
     \draw[thick] (4,-12) -- (6,-10);
    \draw[thick] (4,4) -- (6,-2);
    \draw[thick] (4,8) -- (6,10);
     
    \draw[thick] (6,-10) -- (10,-6);
    \draw[thick] (6,-2) -- (10,2);
   \draw[thick] (6,10) -- (10,-2);
     
   \draw[thick] (10,-6) -- (12,-4);
    \draw[thick] (10,2) -- (12,-4);
   \draw[thick] (10,-2) -- (12,0);
     
     \draw[dashed] (10,2) -- (7,11);
     \draw[dashed] (6,10) -- (7,11);
     \draw[dashed] (10,-2) -- (12,-8);

    \draw[dotted,very thick] (4,8)--(4,-12);
    \draw[dotted,very thick] (7,11)--(7,-9);
    
    \draw[dotted] (0,4) -- (0,-13) node[below]  {$q_{i-1}$};
    \draw[dotted] (4,-12) -- (4,-13) node[below]  {$p_{i}$};
    \draw[dotted] (12,0) -- (12,-13) node[below]  {$q_{i}$};
    
    \draw[dotted] (9,1) -- (9,-13) node[below]  {$v_{i}$};
    \draw[dotted] (6,10) -- (6,-13) node[below]  {$u_{i}$};
    \draw[dotted] (7,-9) -- (7,-13) node[below]  {$\nu_{i}$};
  
\end{tikzpicture}
& .\hspace{2cm} .&
\begin{tikzpicture}[scale=0.45]
  
    \draw[thick] (0,0) -- (2,-2);
    \draw[thick] (0,0) -- (2,6);
    \draw[thick] (0,5) -- (2,3);
    \draw[thick] (0,9) -- (2,7);
    
    \draw[thick] (2,-2) -- (6,-6);
    \draw[thick] (2,6) -- (6,2);
    \draw[thick] (2,3) -- (6,15);
    \draw[thick] (2,7) -- (6,3);

    \draw[thick] (6,-6) -- (8,0);
    \draw[thick] (6,2) -- (8,0);
    \draw[thick] (6,15) -- (8,13);
    \draw[thick] (6,3) -- (8,1);
    
    \draw[dashed] (2,6) -- (3,9);
     \draw[dashed] (2,3) -- (1,0);
      
    \draw[dotted] (0,9) -- (0,-7) node[below]  {$q_{i-1}$};
    \draw[dotted] (8,13) -- (8,-7) node[below]  {$q_{i}$};
    \draw[dotted] (6,15) -- (6,-7) node[below]  {$p_{i}$};

    \draw[dotted] (3,6) -- (3,-7) node[below]  {$u_{i}$};
    \draw[dotted] (7/3,6.5) -- (7/3,-7) node[below]  {$\nu_{i}$};
    
  \end{tikzpicture}
\end{tabular}
\caption{Case (b) left and case (c) right.}
\label{Fig:Race}
\end{figure}

 \item \emph{Suppose $n=1$ and $(c)$ holds.} The proof is analogous to previous claim. Again by Schmidt and Summerer \cite{SS1,SS2} and Lemma \ref{KeyLemma} we can instead of $\bh_\bxi$ consider a template $P$ with slopes restricted to $\{1,-n\}$ satisfying $(c)$. That is (see Remark \ref{rmktpq}), there exists a constant $\ell>0$ such that $|p_i-q_{i-1}|<\ell$.\\
 The template has $m$ components with slope $-1$ and one component with slope $m$. Consider $i\in\mathbb{N}$, and its associated interval $[q_{i-1},q_i]$. There are two cases :\\

\textbf{ Case 1:} Suppose first that $P_{m+1}$ has constant slope $m$ on some interval $[u_i,v_i]\subset [q_{i-1},q_i]$. Since there is only one component $P_j$ with slope $m$, 
$$ v_i \le p_i \quad \textrm{ and } \quad u_i \ge q_{i-1} + \frac{P_{m+1}(q_{i-1}) - P_1(q_{i-1})}{m+1} := \nu_i.$$
{The second inequality is provided by the observation that the line of slope $m$ passing through $(q_{i-1},P_1(q_{i-1}))$ lies above the line of slope $m$ passing through $(u_i,P_{m+1}(u_i))$. See Figure ~\ref{Fig:Race}. Indeed, condition (iv) on slopes in the Definition \ref{DefTemp} of a template ensures that for any turning point $q$ such that the unique positive slope changes from $P_i$ to $P_j$, we have either $j<i$ and the new line of positive slope is below the previous one or $P_i(q)=P_j(q)$ and we stay on the same line of positive slope. Note that if $n>1$, we can only infer that the highest line with positive slope at some $q$ is above the highest line with positive slope at some $p>q$, and our observation is not valid anymore.\\
Recalling $P_{m+1}(q) \ge  0\ge P_1(q)$,}
$$|P_1(q_{i-1})| \le |P_{m+1}(q_{i-1}) - P_1(q_{i-1})| \le (u_i -q_{i-1})(m+1) \le \ell(m+1).$$
Hence,
$$\left| \min_{[q_{i-1},q_i]}P_1(q)\right| = |P_1(p_i)| \le |P_1(q_{i-1}) -(p_i-q_{i-1})| \le \ell(m+2).$$

\textbf{ Case 2:} Suppose now that $P_{m+1}$ has constant slope $-1$ on $[q_{i-1},q_i]$. Consider the index
$$j:= \max\{ j<i \mid P_{m+1} \textrm{ has slope $m$ on a subinterval of  } [q_{j-1},q_j] \}.$$
By construction, $P_{m+1}(p_i) \le P_{m+1}(p_j)$ and 
$$\left| \min_{[q_{i-1},q_i]}P_1(q)\right| = |P_1(p_i)| \le mP_{m+1}(p_j) \le m^2 |P_1(p_j)| \le \ell(m+2)m^2.$$
This proves that property $(a)$ holds for $P$, and by Lemma \ref{KeyLemma} (1) also holds for $\bh_\bxi$ and $\bxi\in \Bad_{m,1}$.

 \end{itemize}
 
 \subsection{Proof of Theorem \ref{nm3}} \label{stpl1}
 The proof of Theorem \ref{nm3} relies on the following key observation.
  
  \begin{lemma}\label{lem}
  Condition (b) or (c) implies that $h_{\bxi,1} (q)$ and $h_{\bxi,2}(q)$ remain at bounded distance, that is
  $h_{\bxi,2}(q) - h_{\bxi,1}(q) \ll1$
  for all $q>0$.
  \end{lemma}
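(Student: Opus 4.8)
## Proof Strategy for Lemma \ref{lem}

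The plan is to work entirely within parametric geometry of numbers and exploit the structure of the successive minima function $h_{\bxi,1}$ together with Minkowski's Second Theorem. The key point is that $h_{\bxi,1}$ is piecewise linear with only two slopes, $-n$ and $m$, and the quantity $g(q) := h_{\bxi,2}(q) - h_{\bxi,1}(q) \ge 0$ changes in a controlled way. Since $h_{\bxi,1}$ has slope $-n$ on each $[q_i, p_{i+1}]$ and slope $m$ on each $[p_i, q_i]$, and since $h_{\bxi,1}$ coincides with the minimum of the gauge functions $h_{\bx_i}$, the second minimum $h_{\bxi,2}$ is governed by the \emph{next} best-approximation gauge function. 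First I would record that on any maximal interval where $h_{\bxi,1}$ has slope $-n$, a new lattice point must be "waiting" to take over; in particular $h_{\bxi,2}$ cannot exceed (up to $O(1)$) the value $h_{\bx_{i+1}}(q)$, which at $q = p_{i+1}$ equals $h_{\bxi,1}(p_{i+1})$. This is exactly the picture in Figure \ref{fig1}: the dashed function $h_{\bx_{i+1}}$ is the candidate for the second minimum near $q_i$.

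The main computation I would carry out is to estimate $g(q)$ at the turning points $p_i$ and $q_i$. At a local minimum $p_i$ of $h_{\bxi,1}$, the function $h_{\bxi,1}$ switches from slope $-n$ (coming from $[q_{i-1}, p_i]$) to slope $m$. On $[q_{i-1}, p_i]$ the lattice point realizing $h_{\bxi,2}$ is (again up to $O(1)$) $\bx_{i}$ itself, so $g(q_{i-1}) \ll |p_i - q_{i-1}|\cdot(m+n) + O(1)$ — a bound controlled by the length of a slope-$(-n)$ interval — while at $q_i$ the relevant estimate comes from a slope-$m$ interval of length $|q_i - p_i|$. Concretely, using \eqref{ssyq}, $|q_i - p_i| = \frac{1}{m+n}\log(X_{i+1}/X_i)$ and $|p_{i+1} - q_i| = \frac{1}{m+n}\log(L_i/L_{i+1})$, so condition (b) says the slope-$m$ intervals are uniformly short and condition (c) says the slope-$(-n)$ intervals are uniformly short. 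In either case, the gap $g$ cannot accumulate over a short interval: on an interval of length $\le \ell$ the difference $g(q) = h_{\bxi,2}(q) - h_{\bxi,1}(q)$ can increase by at most $(m+n)\ell$ since each $h_{\bxi,j}$ has bounded slopes, and at the endpoints of the opposite-slope intervals $g$ is forced back down to $O(1)$ because there $h_{\bxi,1}(p_i) = h_{\bx_i}(p_i)$ and the old best-approximation point $\bx_{i-1}$ (or the new one $\bx_{i+1}$) pins $h_{\bxi,2}$ near $h_{\bxi,1}$. Combining, $g(q) = O(1)$ uniformly.

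The step I expect to be the genuine obstacle is justifying rigorously that at the appropriate turning points $h_{\bxi,2}$ is \emph{actually} close to $h_{\bxi,1}$ — i.e., identifying which lattice point realizes the second minimum and showing the two "first" lattice points $\bx_i, \bx_{i+1}$ become linearly independent over $\bQ$ at the switch, so that together they certify $\lambda_2$. For this I would invoke the discussion in \cite{Mo2} on best approximations (successive best-approximation vectors span, eventually, together with earlier ones, a $2$-dimensional sublattice), or argue directly: near $q_i$ both $h_{\bx_i}(q_i)$ and $h_{\bx_{i+1}}(q_i)$ equal $h_{\bxi,1}(q_i)$, and $\bx_i \ne \pm\bx_{i+1}$, so $h_{\bxi,2}(q_i) \le h_{\bxi,1}(q_i) + O(1)$, whence $g(q_i) = O(1)$; then propagate this bound across the short intervals as above. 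The remaining inequality $h_{\bxi,2} \ge h_{\bxi,1}$ is trivial by definition of successive minima. I do not expect the boundedness of slopes or the Minkowski normalization $\sum_j h_{\bxi,j} = O(1)$ to cause difficulty; they are used only for bookkeeping.
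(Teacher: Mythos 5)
Your proposal takes essentially the same route as the paper's (very short) proof: observe that $h_{\bxi,1}(q_i)=h_{\bxi,2}(q_i)$ because $\bx_i$ and $\bx_{i+1}$ are distinct primitive vectors (and hence linearly independent, since $X_i<X_{i+1}$ rules out $\bx_i=\pm\bx_{i+1}$) achieving the same gauge value at $q_i$; observe that $g=h_{\bxi,2}-h_{\bxi,1}$ has slope bounded by $m+n$; and then note that $g$ peaks at $p_{i+1}$ and its value there is bounded by $(m+n)$ times the \emph{shorter} of the two flanking intervals $|p_{i+1}-q_i|$ and $|q_{i+1}-p_{i+1}|$, the first controlled by (c), the second by (b). Your final paragraph carries this out correctly, and the "obstacle" you flag (independence of $\bx_i,\bx_{i+1}$) is resolved exactly as you say.

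One small imprecision worth fixing: in the middle of the proposal you write that at the turning points $p_i$ "the old best-approximation point $\bx_{i-1}$ (or the new one $\bx_{i+1}$) pins $h_{\bxi,2}$ near $h_{\bxi,1}$." This is not quite right as stated: at the local minimum $p_i$ the second minimum is generically \emph{not} close to the first (that is precisely the quantity being estimated). What pins things down is equality at the change-over points $q_{i-1},q_i$, propagated to $p_i$ via the bounded slope. It is also worth stating explicitly why the bound at $p_i$ controls $g$ on the whole interval $[q_{i-1},q_i]$: on $[q_{i-1},p_i]$ the first minimum has constant slope $-n$ (the minimal possible slope), so $g$ is non-decreasing there, and symmetrically $g$ is non-increasing on $[p_i,q_i]$; hence $g$ attains its maximum at $p_i$. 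Both the paper and your write-up leave this monotonicity implicit, and it is the reason a bound at $p_i$ suffices even when one of the two flanking intervals is long (which does happen if only one of (b),(c) holds).
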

  
  Indeed, suppose $m+n =3$ and conditions of Theorem \ref{nm3} (Statement 1 or Statement 2) are satisfied. Since $h_{\bxi,2}(q) = h_{\bxi,3}(q)$ for certain arbitrarily large values of $q=q_0$ (See \cite[Corollary 2.2]{SS1} or \cite[Theorem 9.2]{Wgene}), at such $q_0$, we have $h_{\bxi,2}(q_0) = h_{\bxi,3}(q_0) = h_{\bxi,1}(q_0) + O(1)$. Remember that Minkowski's second convex body theorem provides $h_{\bxi,1}(q)+ h_{\bxi,2}(q) + h_{\bxi,3}(q) =  O(1)$. Combining this observations leads to  $|h_{\bxi,1}(q_0)| = O(1)$, meaning that
  $\bxi\in \bbR^{n \times m}$ is not singular. In particular, $$\limsup_{q\to\infty} \frac{h_{\bxi,1}(q)}{q}  = 0 = \frac{m-n\hat{\omega}(\bxi) }{1+\hat{\omega}(\bxi)}, \; \textrm{ i.e. } \;  \hat{\omega}(\bxi) = \frac{m}{n}.$$
 
  \begin{proof}[Proof of Lemma \ref{lem}]
 { At $q_i$, $h_{\bxi,1}(q_i) = h_{\bxi,2}(q_i)$ and the difference $|h_{\bxi,1} - h_{\bxi,2}|$ has slope at most $m+n$. In particular, on the interval $[q_i,q_{i+1}]$,
  \[ \max_{[q_i,q_{i+1}]}|h_{\bxi,1}(q) - h_{\bxi,2}(q)| \le \min \{(m+n) |p_{i+1}-q_i |, (m+n) |q_{i+1}-p_{i+1} |\}   \]}
  and both (b) or (c) implies that $h_{\bxi,2} - h_{\bxi,1} \ll1$ with an explicit bound depending on dimension.
  \end{proof}

 \subsection{Proof of upper bounds for admissible exponents} \label{stpl2}
 We consider first upper bounds for the admissible ordinary exponent (Theorem ~\ref{OptiOrdi}), and then upper bounds for the admissible uniform exponent (Theorem ~\ref{optimn}).

 
 \subsubsection{Proof of Theorem~\ref{OptiOrdi} } \label{OptiOrdi1} We prove the three statements independently.
 
 \begin{itemize} 
 \item{Proof of Statement 1.}\\
  For $m=1$, consider for $i\in \bN$ the two-dimensional lattices $\Lambda_i = \langle  \bx_{i-1},\bx_i\rangle_\mathbb{Z}$. It is well known that in the case of simultaneous approximation ($m = 1$) one has
\begin{equation}\label{coovol}
L_{i-1} X_i \asymp {\rm covol}\,  \Lambda_i \to_{ i\to \infty} \infty.
\end{equation}
See for example \cite[Theorem 17]{Mo1}, or \cite{Mo2}, or the original statement from \cite[Satz 9]{Jar}. Now (C) means $L_{i-1}\asymp L_i$, leading to $L_i X_i \to_{ i\to \infty}  \infty$. That implies $\omega(\bxi) \le 1$.\\

\item{Proof of Statement 2.}\\
For $m=2$, we distinguish two cases.

\begin{itemize}
\item  In the \textbf{first case}, all the best approximation vectors  $\bx_{j}$ 
{ eventually}
lie in a two-dimensional linear subspace $\mathcal{B}\subset \mathbb{R}^d, d={2+n}$. 
{ As it was shown in  \cite{Mo2} this
  never happens for \emph{completely irrational} matrices $\pmb{\xi}$ (in the sense of the definition from \cite{Mo2};
  note that the notion of complete irrationality differs from  four notion of total irrationality which we use through this paper and which was introduced in  Section \ref{beginn}). See  Section 3 from \cite{Mo2},  as well as Section 3.2, Theorem 3.5  and  further discussion from \cite{Neck}.  

Consider two-dimensional subspace $\frak{L}$ defined in (\ref{frakel}).
As the matrix $\xi$ is totally irrational, rational two-dimensional   subspace $\mathcal{B}$ cannot coincide with the two-dimensional subspace $\frak{L}$.}
Then the intersection { $\ell=\frak{L}\cap \mathcal{B}$} is a one-dimensional linear subspace. So this case is completely similar to the case of simultaneous approximation ($m=1$) considered in Statement 1 above and $\omega(\bxi) \le 1$.
{
Indeed, the intersection $\Lambda =\mathbb{Z}^d\cap \mathcal{B}$ is a two-dimensional lattice.
As now $\frak{L}+\mathcal{B}$ is a three-dimensional linear subspace 
and eventually $\bx_i \in \mathcal{B} \subset \frak{L}+\mathcal{B}$,
we see that 
$$L_i\asymp {\rm dist} (\bx_{i},\ell), \,\,\,\,\, i\to \infty,$$
 where ${\rm dist} (\cdot,\cdot
)$ denotes the Euclidean distance.
Meanwhile,  lattice points  $\bx_{i} \in \Lambda \subset \mathcal{B}$ are "best approximations in induced norm" to the line $\ell$ and in particular the two-dimensional $0$-symmetric convex set
$$\{\bx =( \pmb{x},\pmb{y}):\,\, |\pmb{x}|\le X_i,\,\, L_{\pmb{\xi}}(\pmb{x})\le L_{i-1}\}
\cap \mathcal{B} \subset \mathcal{B}$$
contains no other points of $\Lambda$ than $\pmb{0}, \pm \bx_{i-1},\pm \bx_{i}$. 
Together with condition (C) this means that 
$$
L_{i} X_i\asymp
L_{i-1} X_i\asymp {\rm dist} (\bx_{i-1},\ell)  \cdot X_i \asymp {\rm covol}\,  \Lambda.
$$
So $L_{i} X_i$ is bounded away from zero and infinity and $\omega(\bxi) =1$.
}

\item
In the \textbf{second case}, there exist infinitely many triples of successive vectors   
\begin{eqnarray*}
\bx_{i-1} &=& (x_{1,i-1},x_{2,i-1},{y}_{1,i-1},...,{y}_{n,i-1})^\top,\\
\bx_{i} &=& (x_{1,i},{x_{2,i}}, {y}_{1,i},...,{y}_{n,i})^\top,\\
\bx_{i+1} &=& (x_{1,i+1},x_{2,i+1},{y}_{1,i+1},...,{y}_{n,i+1})^\top
\end{eqnarray*}

 which are independent.
We consider two three-dimensional vectors
\begin{equation}\label{pro1}
\frak{x}_1 = (x_{1,i-1},x_{1,i}, x_{1,i+1}),
\,\,\,
\frak{x}_2 = (x_{2,i-1},x_{2,i}, x_{2,i+1})
\in \mathbb{Z}^3.
\end{equation}
{It} may happen that these two vectors are dependent (proportional) or not.

If the vectors \eqref{pro1} are not proportional to each other,  there exists $1\le j \le n$ such that all the three three-dimensional vectors
$$\frak{x}_1, \frak{x}_2\,\,\, \text{and}\,\,\, \frak{y} = (y_{j,i-1},y_{j,i}, y_{j,i+1})
$$
are independent (because of independence of the vectors $\bx_{i-1},\bx_{i}, \bx_{i+1}$).
In other words,
$$
1\le
\left| {\rm det}
\left(\begin{array}{ccc}
x_{1,i-1}&x_{1,i}& x_{1,i+1}\cr
x_{2,i-1}&x_{2,i}& x_{2,i+1}\cr
y_{j,i-1}&y_{j,i}& y_{j,i+1}
\end{array}
\right)\right|=
$$
$$
=
\left| {\rm det}
\left(\begin{array}{ccc}
x_{1,i-1}&x_{1,i}& x_{1,i+1}\cr
x_{2,i-1}&x_{2,i}& x_{2,i+1}\cr
y_{j,i-1}-\xi_{j,1}x_{1,i-1} - \xi_{j,2}x_{2,i-1}&
y_{j,i}-\xi_{j,1}x_{1,i} - \xi_{j,2}x_{2,i}& 
y_{j,i+1}-\xi_{j,1}x_{1,i+1} - \xi_{j,2}x_{2,i+1}
\end{array}
\right)\right|
$$
$$
\ll
X_{i} X_{i+1}L_{i-1}.
$$
But $ X_i\le X_{i+1}$ , while $ L_{i-1}\ll L_{i+1}$ by condition (C). So
$$
L_{i+1} X_{i+1}^2 \gg 1
$$  infinitely often, and this means $ \omega(\bxi) \le 2$.

Now we deal with the case when the three-dimensional vectors $\frak{x}_1, \frak{x}_2$ are proportional. As three vectors $\bx_{i-1},\bx_{i}, \bx_{i+1}$ are independent,
there exist two three-dimensional vectors 
$$
\frak{y}' = (y_{j,i-1},y_{j,i}, y_{j,i+1})\,\,\,
\text{and}
\,\,\,
\frak{y}'' = (y_{l,i-1},y_{l,i}, y_{l,i+1}),\,\,\,
1\le j\neq l \le n,
$$
such that both triples 
$
\frak{x}_1,
\frak{y}' ,
\frak{y}'' 
$
and
$
\frak{x}_2,
\frak{y}' ,
\frak{y}'' 
$
consist of independent vectors.
Then, similarly to the previous case we have the bound
$$
1\le
\left| {\rm det}
\left(\begin{array}{ccc}
x_{1,i-1}&x_{1,i}& x_{1,i+1}\cr
y_{j,i-1}&y_{j,i}& y_{j,i+1}\cr
y_{l,i-1}&y_{l,i}& y_{l,i+1}
\end{array}
\right)\right|=
$$
$$
=
\left| {\rm det}
\left(\begin{array}{ccc}
x_{1,i-1}&x_{1,i}& x_{1,i+1}\cr
y_{j,i-1}-\xi_{j,1}x_{1,i-1} - \xi_{j,2}x_{2,i-1}&
y_{j,i}-\xi_{j,1}x_{1,i} - \xi_{j,2}x_{2,i}& 
y_{j,i+1}-\xi_{j,1}x_{1,i+1} - \xi_{j,2}x_{2,i+1}\cr
y_{l,i-1}-\xi_{j,1}x_{1,i-1} - \xi_{j,2}x_{2,i-1}&
y_{l,i}-\xi_{j,1}x_{1,i} - \xi_{j,2}x_{2,i}& 
y_{l,i+1}-\xi_{j,1}x_{1,i+1} - \xi_{j,2}x_{2,i+1}
\end{array}
\right)\right|
$$
$$
\ll X_{i+1}L_iL_{i-1}\ll X_{i+1} L_{i+1}^2.
$$
If this case occurs infinitely often we have $\omega(\bxi) \le \frac{1}{2}  <2$. Everything is proven.\\
\end{itemize}


\item{Proof of Statement 3.}\\
 The proof for $m\ge 3$ relies on the fact that norms of best approximations have exponential growth, as first observed by Voronoi. We use the following result of Bugeaud and Laurent~\cite[Lemma 1]{MMJ} : for any $\bxi\in\bbR^{n \times m}$ we have
\begin{equation}  \label{eq:buglau}
X_{k+3^{m+n}-1}\ge 2X_k.
\end{equation}

{
Note that this inequality holds for all possible values of $k$. We consider only totally irrational $\pmb{\xi}$, thus (\ref{eq:buglau}) holds for all positive integers $k$.
In full generality, (\ref{eq:buglau}) holds only for $k$ such that the best approximation $\bx_{k+3^{m+n}-1}$  is defined.\\}

Given $\bxi\in\mathbb{R}^{n\times m}$, by property (C) there is some $c_1=c_1(\bxi)>0$ for which we have
\[L_k = L_1 \cdot \frac{ L_2 }{L_1}\cdot \frac{ L_3}{L_2}\cdot \cdots \cdot \frac{ L_k}{ L_{k-1}  } \gg c_1^{k},	\]
or equivalently
\[-\frac{  \log L_k}{k} \le -\log c_1+o(1), \qquad k\to\infty.\]
On the other hand since $X_k$ grows exponentially by \eqref{eq:buglau}, there is $c_2=c_2(m,n)>1$ such that
\[X_k \gg c_2^{k}\]
or
\[\frac{ \log X_k}{ k } \ge \log c_2-o(1), \qquad k\to\infty.\]
Combining yields
\[\omega(\bxi)= \limsup_{k\to\infty} \frac{ -\log L_k }{\log X_k}=\limsup_{k\to\infty} \frac{ -\frac{\log L_k}{k} }{ \frac{\log X_k}{k} } \le \frac{- \log c_1}{ \log c_2}< \infty.\]
\qed 
\end{itemize}

 \subsubsection{First statement of Theorem \ref{optimn}}

Recall Lemma \ref{lem} shows that condition  (B) or (C)  implies that $h_{\bxi,1}$ and $h_{\bxi,2}$ remain at bounded distance. By connectedness (we assume the condition of \cite[Theorem 9.2]{Wgene}), there exists arbitrarily large $q$ such that $h_{\bxi,2}(q)=h_{\bxi,3}(q)= h_{\bxi,1}(q) +O(1)$. It follows from Minkowski that for such a $q$, one has
$$3 h_{\bxi,1}(q) \ge -(m+n-3) h_{\bxi,m+n}(q) +O(1).$$
Taking the limits, it gives
$$ 3 \, \limsup_{q\to\infty} \frac{h_{\bxi,1}(q)}{q} \ge -(m+n-3) \,\liminf_{q\to\infty} \frac{h_{\bxi,m+n}(q)}{q}.$$
Since $h_{\bxi,m+n}$ has slope at most $m$, according to \eqref{LimPOmega} we deduce
$$\frac{m - n\hat{\omega}(\bxi)}{1+\hat{\omega}(\bxi)} \ge \frac{-(m+n-3)m}{3}.$$
This is better than the trivial lower bound $\frac{m - n\hat{\omega}(\bxi)}{1+\hat{\omega}(\bxi)}\ge-n$ only for $m=1$ or $m=2$. For $m=1$ this proves $\hat{\omega}(\bxi) \le \frac{1}{2}$ and the optimality of the interval $[1/n,1/2]$ as in \cite{L}. For $m=2$, short computations provide the bound $\hat{\omega}(\bxi) \le 2$. \qed 
 
\subsubsection{Geometry of best approximations and second part of Theorem \ref{optimn}}
The proof is based on the study of the geometry of best approximations. On this topic, see also \cite{MaMo22}. Here we use essentially a construction from \cite{Szeged}. The key ingredient is the {\it monotonicity} lemma (see below Lemma \ref{monotonicity}). In \cite{Szeged} it was proven not for ordinary best approximation vectors but for so-called {\it best spherical approximations}. In the present paper we adapt the proof for ordinary approximations. An extended formulation of this lemma and its technical proof are shifted to the Appendix.\\

First of all, we should note that if all the best approximation vectors $ \bx_i$ eventually lie in a fixed three-dimensional subspace, then $\hat{\omega}(\bxi) \le 1$ and there is nothing to prove, as in this case the situation is reduced to approximations to a one-dimensional line.







 { Indeed, in the case under consideration $\pmb{\xi}$ is a $2\times n$ matrix and we deal with integer points close to the two-dimensional subspace  $\frak{L}$ defined in (\ref{frakel}).

If all the best approximations $\bx_i$ eventually belong to a three-dimensional rational subspace $\mathcal{B}$, then either
$ \frak{L} \subset \mathcal{B}$  (case 1$^0$)  or $\frak{L} \cap \mathcal{B}= \ell$ is a one-dimensional linear subspace (case 2$^0$).
But case 1$^0$ is not possible because of extra irrationality condition
(\ref{conti})
 in the second part of Theorem \ref{optimn}.
So let us consider  case 2$^0$. Vectors  $\bx_i$ are eventually  best approximation vectors from lattice $\Gamma = \mathcal{B}\cap \mathbb{Z}^d$
to the one-dimensional subspace
$\ell \subset \mathcal{B}$ with respect to the induced norm (see Section 3.3  from \cite{Mo1}). This means the following: as  $\bx_{i-1}$ and  $\bx_i$  are two successive best approximation vectors to $\pmb{\xi}$, the parallelepiped
$$
\Pi_i = \{ \bx= (\pmb{x},\pmb{y}), \pmb{x}\in \mathbb{R}^2,\,\,\pmb{y}\in \mathbb{R}^n:\,\, ||\pmb{x}||\le X_i,\,\, L_{\pmb{\xi}} (\bx) \le L_{i-1}\}
\subset \mathbb{R}^{2+n}
$$
contains only the integer points $\pmb{0}, \pm \bx_{i-1},  \pm \bx_{i}$
(and $\pm \bx_{i-1},  \pm \bx_{i}$ belong to its boundary),
and the convex, 0-symmetric three-dimensional set
$$
\Omega_i = \Pi_i \cap \mathcal{B}
$$
does not contain non-trivial  points of the lattice $\Gamma$ in its interior. 
As the intersection $\frak{L}\cap\mathcal{B}$ is a one dimensional subspace $\ell$,
there exist positive constants $c^\pm = c^\pm (\frak{L},\mathcal{B})$ such that for three-dimensional  sets
$$
\Omega_i^\pm = \{
\bx\in \mathcal{B}:\,\, ||\bx ||\le X_{i-1}, \,\,{\rm  dist}\, (\bx, \ell) \le c^\pm L_i
\}
$$
one has
$$
\Omega_i^-\subset \Omega_i \subset \Omega_i^+.
$$
In particular,
we have the bounds
$
{\rm vol}_3 \, \Omega_i  \asymp_{\frak{L}, \mathcal{B}} L_{i-1}^2X_i
$
for the three-dimensional volume of $\Omega_i$.
 Now consideration of two-dimensional lattice
 $$
 \Lambda_i = \langle  \bx_{i-1},\bx_i\rangle_\mathbb{Z}
 \subset \langle  \bx_{i-1},\bx_i\rangle_\mathbb{R}
 $$
 and two-dimensional section
 $$
 \Omega_i \cap\langle  \bx_{i-1},\bx_i\rangle_\mathbb{R}
 $$
 as in the  beginning of Subsection \ref{OptiOrdi1} 
 leads to (\ref{coovol}), and so we get 
  $ \hat{\omega}(\bxi) \le\omega(\bxi) \le 1$.

}







\vskip+0.3cm
So, we may assume that all the best approximations do not eventually belong to a three-dimensional subspace.

Consider an index  $i$ such that three consecutive best approximations $\bx_{i-1},\bx_i, \bx_{i+1}$ are linearly independent. We may assume that $\hat{\omega}(\bxi)>1$, to avoid the case when vectors $\frak{x}_1, \frak{x}_2$ considered in Subsection \ref{OptiOrdi1} in \eqref{pro1} are proportional, see Lemma 4 from \cite{Szeged} for the details). Considering determinants gives 
\[ L_{i-1} X_i X_{i+1} \gg 1 \]
which under (B) and (C) turns into
\begin{equation}\label{G1}
L_i \gg X_i^{-2}.
\end{equation}

Consider $\nu$ and $k$ indexes associated to two successive triples  
\begin{equation}\label{triplet}
\bx_{\nu-1},\bx_\nu, \bx_{\nu+1}\,\,\,\,\text{ and }\,\,\,\,\bx_{k-1},\bx_k, \bx_{k+1}
\end{equation}
 of linearly independent consecutive best approximations. As the best approximation vectors $\bx_i$ do not lie eventually in a three-dimensional subspace, {
there exist infinitely many successive triples  (\ref{triplet}) such that four vectors
 $\bx_{\nu-1},\bx_\nu, \bx_{\nu+1}, \bx_{k+1}$ are independent.
 In this situation all the vectors 
 \begin{equation}\label{triplet1}
\bx_\nu,\bx_{\nu+1},....,\bx_{k-1}, \bx_k
\end{equation}
belong to the same two-dimensional linear subspace and the four vectors
 \begin{equation}\label{triplet2}
\bx_{\nu-1},\bx_{k-1},\bx_{k-1}, \bx_k
\end{equation}
are independent. In this situation, in \cite{MaMo22} the successive triples \eqref{triplet} are said to form \emph{pattern B}, and in the terminology from \cite{Roy23}  the corresponding indices are successive elements of the set $J$}.\\

We consider the following \emph{monotonicity lemma}. 
 \begin{lemma}\label{monotonicity}
Suppose {that all the best approximation vectors \eqref{triplet1}} belong to the same two-dimensional linear subspace. 
Then
$$ L_\nu X_{\nu+1}\ll L_{k-1}X_k.$$
 \end{lemma}
 
 Let $1< \alpha < \hat{\omega}(\bxi)$. Then by \eqref{defexp}, for sufficiently large indexes $j$ we have $L_j < X_{j+1}^{-\alpha}$. By the monotonicity Lemma \ref{monotonicity}, and (B) we get
\begin{equation}\label{G2}
L_\nu \ll X_\nu^{-1} X_k^{1-\alpha}.
\end{equation}
Combining \eqref{G1} and \eqref{G2}, we deduce
\begin{equation}\label{G3}
X_\nu^{-2} \ll X_\nu^{-1} X_k^{1-\alpha}, \, \textrm{ that is } X_k^{ \alpha -1} \ll X_\nu.
\end{equation}

{
The four vectors (\ref{triplet2}) are linearly independent and so
dealing with 
coordinates
$$\bx_i =
(x_{1,i},x_{2,i}, {y}_{1,i},...,{y}_{n,i})^\top
$$
we 
follow the argument from \cite{Szeged}.
As $\alpha>1$ following Lemma 4 and the  beginning of the proof of Lemma 5 from \cite{Szeged}, 
we
see that  $4\times (2+n)$ matrix
$$
\left(
\begin{array}{ccccc}
x_{1,\nu-1}&x_{2,\nu-1}& {y}_{1,\nu-1}&...&{y}_{n,\nu-1}\cr
x_{1,k-1}&x_{2,k-1}& {y}_{1,k-1}&...&{y}_{n,k-1}\cr
x_{1,k}&x_{2,k}& {y}_{1,k}&...&{y}_{n,k}\cr
x_{1,k+1}&x_{2,k+1}& {y}_{1,k+1}&...&{y}_{n,k+1}
\end{array}
\right)
$$
has rank 4.
Considering determinants
(the argument here is completely similar to the proof of Lemma  5  from \cite{Szeged},
where all the calculations are very detailed), this provides the bound
\begin{equation}\label{newbou}
L_{\nu-1} L_{k-1}X_k X_{k+1} \gg 1.
\end{equation}
 }


  { Now we  combine (\ref{newbou}) with \eqref{G2}
  and  inequality $L_k < X_{k+1}^{-\alpha}$}
 to obtain
\begin{equation}\label{G4}
1 \ll X_\nu^{-1}  X_k^{3-2 \alpha} \, \textrm{ that is } X_\nu\ll X_k^{ 3 -2\alpha}.
\end{equation}
Hence comparing upper and lower bounds given by \eqref{G3} and \eqref{G4}, we get
\[X_k^{ \alpha -1} \ll X_k^{ 3 -2 \alpha}.\]
It follows that $\alpha \le \frac{4}{3}$, implying $\hat{\omega}(\bxi) \le \frac{4}{3}$. \qed

\section{Constructions claimed in Theorem$^\star$~\ref{Thm-}, Theorem$^\star$~\ref{ThmOrd}  and Theorem$^\star$~\ref{Main}}\label{Construction} \label{qq0}

In this section, all the constructions rely on the variational principle in parametric geometry of numbers (Theorem \ref{VarPrinc}). Note that if $\min(n,m)=1$, 
all the existence results may be deduced from Roy's theorem \cite{Roy}, without statement on Hausdorff or packing dimension.\\

The strategy of the constructions are always the same. We first construct an elementary $(m,n)$-template $\bP_k$ on a generic interval $I_k = [q_k, q_{k+1}]$, where $q_k>1$ is arbitrary and $q_{k+1}$ may depend on $q_k$. We then fix $q_1$ and concatenate these $(m,n)$-templates $\bP_k$ to obtain $\bP$ defined on the interval $\bigcup_{k\in\bN} I_k = [q_1, \infty]$. Then, $\bP$ is a $(m,n)$-template provided that the conditions (i) - (iv) are satisfied at each $q_k$. This constructs a family of templates $\cP$, where parameters may change on each concatenated elementary template.\\

It will be clear that the constructed templates $\bP$ are connected (see \cite[Theorem 9.2]{Wgene}, and this ensures total irrationality), and have strictly positive upper contraction rate ensuring existence of matrices via Theorem \ref{VarPrinc}. Moreover, the component $P_1$ of the templates $\bP$ satisfies \textbf{Condition U} from Lemma \ref{KeyLemma}. Then Proposition \ref{prop} provides the corresponding set of totally irrational matrices with associated properties. Note that  we even get the stronger property of having algebraically independent coordinates if the upper contraction rate (hence the packing dimension)  is strictly larger than $mn-1$, as explained in Remark \ref{RmkAlg}.

\subsection{Construction for Theorem$^\star$~\ref{Thm-}}\label{beau}

Fix dimensions $m,n\ge 1$, and $q_1$ large enough so that $q_1^2 \ge q_1 + \frac{m+n}{n} \log q_1$.\\
At $q_k$, we choose $P_1(q_k) = \cdots = P_{m+n}(q_k) = 0$. On the interval $[q_k, q_k + \log q_k]$, the components $P_1, \ldots , P_{m+n-1}$ are equal with joint slope $\sigma_c := \frac{-n m + (n-1)m}{m+n-1} = \frac{-m}{m+n-1} < 0$ while $P_{m+n}$ has slope $m$. Thus, 
\begin{eqnarray} \notag
P_1(q_k + \log q_k) &=& \cdots = P_{m+n-1}(q_k + \log q_k) = \frac{-n \log q_k}{m+n-1} \, \textrm{ and}\\ \label{t1}
P_{m+n}(q_k+\log q_k) &=& m \log q_k
\end{eqnarray}
Fix $t_k = q_k + \frac{m+n}{n} \log q_k$, on the interval $[q_k+\log q_k, t_k]$, the components $P_1, \ldots , P_{m+n-1}$ are equal with joint slope $\sigma_b := \frac{-n (m-1) + mn}{m+n-1} = \frac{n}{m+n-1} > 0$ while $P_{m+n}$ has slope $-n$. It is easy to check that 
\[ P_1(t_k) = \cdots = P_{m+n}(t_k) =0.\]
Fix $q_{k+1} = q_k^2 > t_k$, on the interval $[t_k,q_{k+1}]$ we simply choose our template to be trivial, that is $\bP = \boldsymbol{0}$. This constructs the elementary $(m,n)$-templates $\bP_k$ on $[q_k, q_{k+1}]$. The concatenation of these $(m,n)$-templates is a $(m,n)$-templates $\bP$, as conditions (i) - (iv) are satisfied at each $q_k$.

 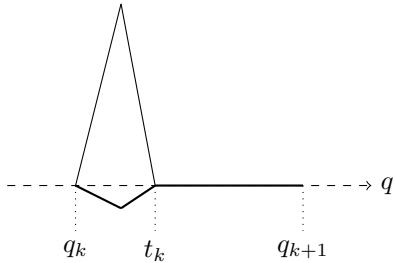
\begin{figure}[h!]
 \begin{center}
 \begin{tikzpicture}[scale=0.3]
 
    \draw[black, thin, dashed, ->] (-3,0) --(13,0) node [right, black] {$q$} ;
 
   \draw[black, thin] (2,8) --(0,0) node [below,black] {}   ;
   \draw[black, thick] (0,0) --(2,-1) ;
   \draw[black, thick] (2,-1) --(3.5,0) node [below,black] {$$} ;
   \draw[black, thin] (2,8) --(3.5,0) ; 
   \draw[black, thick] (3.5,0) --(10,0) node [below,black] {$$} ;
   
   \draw[black, dotted] (0,0) -- (0,-2) node [below, black] {$q_k$};
   \draw[black, dotted] (3.5,0) -- (3.5,-2) node [below, black] {$t_k$};
   \draw[black, dotted] (10,0) -- (10,-2) node [below, black] {$q_{k+1}$};

 \end{tikzpicture}
 \end{center}
 \caption{Example when $m=3$ and $n=4$. Bold lines represent more than 1 component.}\label{fig2}
 \end{figure}

It follows from \eqref{t1} that $\bP$ does not have (a). One can check the equivalences
\begin{eqnarray*}
 m> \frac{n}{m+n-1} =\sigma_b &\Leftrightarrow& m >1,\\
  -n < \frac{-m}{m+n-1}=\sigma_c &\Leftrightarrow& n >1.
 \end{eqnarray*}
Hence $\bP$ has properties (b) or (c) if respectively $m>1$ or $n>1$. Indeed, then $P_1$ never has extremal slopes $m$ and $-n$. 

One can easily adapt the construction to satisfy property (B) but not (C) or property (C) but not (B). Namely one can adapt the four main slopes of the construction to ensure either $\sigma_b=m, \sigma_c>-n$ or $\sigma_b<m,\sigma_c=-n$.

\begin{lemma}\label{LemContr}
$\underline{\delta}(\bP) = \overline{\delta}(\bP) = mn$.
\end{lemma}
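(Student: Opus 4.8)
The plan is to compute the local contraction rate $\delta(\bP,q)$ on each piece of the elementary template $\bP_k$ and show that the time average tends to $mn$. Recall from \eqref{Delta0} that wherever $\bP \equiv \boldsymbol{0}$ the contraction rate is maximal, $\delta(\bP,q) = mn$. On $[q_k,q_{k+1}]$ the template is trivial on the whole subinterval $[t_k, q_{k+1}] = [t_k, q_k^2]$, and nontrivial only on $[q_k, t_k]$, which has length $\frac{m+n}{n}\log q_k$. Since $q_{k+1} - t_k = q_k^2 - q_k - \frac{m+n}{n}\log q_k$ dominates everything, the proportion of $[q_1, q]$ on which $\bP$ is nontrivial tends to $0$ as $q \to \infty$.

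The key steps, in order, are as follows. First I would recall that $\delta(\bP,q)$ is always bounded above by $mn$ and below by $0$ (it is a nonnegative quantity bounded by the maximal value, as noted after \eqref{Delta0}), so no detailed knowledge of its value on the nontrivial pieces is needed — only a trivial bound. Second, I would estimate the measure of the ``bad'' set: writing $T_k = \frac{m+n}{n}\log q_k$ for the length of the nontrivial part of $I_k$ and noting $q_{k+1} = q_k^2$, so that $\log q_{k+1} = 2\log q_k$ and hence $q_k = q_1^{2^{k-1}}$, we get that on $[q_1, q_N]$ the total length where $\bP \ne \boldsymbol{0}$ is $\sum_{k=1}^{N-1} T_k = \frac{m+n}{n}\sum_{k=1}^{N-1}\log q_k = \frac{m+n}{n}(\log q_1)\sum_{k=1}^{N-1} 2^{k-1} = \frac{m+n}{n}(\log q_1)(2^{N-1}-1)$, while $q_N - q_1 = q_1^{2^{N-1}} - q_1$ grows doubly exponentially in $N$. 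Third, for an arbitrary $q \in [q_N, q_{N+1}]$ I would bound
\[
\delta([q_1,q]) = \frac{1}{q - q_1}\int_{q_1}^q \delta(\bP,q')\dd q' \ge \frac{1}{q-q_1}\Big( mn(q - q_1) - mn\!\!\sum_{k=1}^{N} T_k\Big) = mn\Big(1 - \frac{\sum_{k=1}^N T_k}{q-q_1}\Big),
\]
and since $q \ge q_N = q_1^{2^{N-1}}$ while $\sum_{k=1}^N T_k = O(2^N)$, the fraction tends to $0$ as $q\to\infty$. The upper bound $\delta([q_1,q]) \le mn$ is immediate. Hence both $\underline{\delta}(\bP) = \liminf_{q\to\infty}\delta([q_1,q])$ and $\overline{\delta}(\bP) = \limsup_{q\to\infty}\delta([q_1,q])$ equal $mn$.

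There is essentially no hard step here; the only point requiring a little care is making sure the ``geometric doubling'' relation $q_{k+1} = q_k^2$ genuinely forces the doubly exponential growth $q_k = q_1^{2^{k-1}}$ (valid since $q_1 > 1$), which is what makes the trivial portions of the template overwhelmingly dominant and kills the contribution of the nontrivial pieces in the Cesàro average. One should also note in passing that the same computation shows the template is well-defined — the intervals do not overlap, since $t_k < q_k^2 = q_{k+1}$ follows from the choice $q_1^2 \ge q_1 + \frac{m+n}{n}\log q_1$ together with $q \mapsto q^2 - q - \frac{m+n}{n}\log q$ being increasing for $q \ge q_1$. This completes the proof of Lemma \ref{LemContr}.
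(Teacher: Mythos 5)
Your proof is correct, and it rests on the same essential observation as the paper's: the template equals $\boldsymbol{0}$, hence $\delta(\bP,q)=mn$ by \eqref{Delta0}, outside a set whose total length is negligible compared to $q$, and elsewhere one only needs the crude bound $0\le\delta(\bP,q)\le mn$. The difference is one of bookkeeping. The paper organizes the estimate recursively, writing $\delta([q_1,q_{k+1}])$ as a three-term weighted average of $\delta([q_1,q_k])$, $\delta([q_k,t_k])$ and $mn$, and invoking $q_k=o(q_{k+1})$ together with $t_k-q_k=o(q_k)$; it then treats $q\in[q_k,t_k]$ separately and leaves $q\in(t_k,q_{k+1})$ implicit (there $\delta([q_1,q])$ is squeezed between $\delta([q_1,t_k])$ and $mn$, both $\to mn$). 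You instead sum the nontrivial lengths globally, $\sum_{k\le N}T_k=O(2^N)$, and pit this against the doubly exponential $q_N=q_1^{2^{N-1}}$, which yields a single uniform lower bound $\delta([q_1,q])\ge mn\bigl(1-O(2^N/q_N)\bigr)$ for all $q\in[q_N,q_{N+1}]$ at once. One small remark: you invoke $\delta(\bP,q)\ge 0$, whereas the text after \eqref{Delta0} only records the upper bound $mn$; nonnegativity is indeed part of the definition of the local contraction rate in \cite{David}, and the paper's own proof also relies on it implicitly in order to discard the first two terms of the weighted average, so this is not a gap, merely worth flagging.
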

By Proposition \ref{prop}, all statements of Theorem$^\star$~\ref{Thm-} follow.\\ 

Note that for this construction, as required to obtain full Hausdorff dimension, we have inferred exponents $\omega = \hat{\omega} = \frac{m}{n}$. 
\begin{proof}[Proof of Lemma \ref{LemContr}]
Since $q_k= o(q_{k+1})$ and $t_k = q_k + o(q_k)$, using only \eqref{Delta0} we can estimate the contraction rates
\begin{eqnarray*}
 \delta([q_1,q_{k+1}]) &=& \frac{ (q_k-q_1) \delta([q_1, q_k]) + (t_k - q_k) \delta([q_k,t_k]) + (q_{k+1}-t_k) mn  }{q_{k+1}-q_1}   \\
  &\to_{k\to\infty}& mn
\end{eqnarray*}
and for $q_k \le q \le t_k$
\begin{eqnarray*}
 \delta([q_1,q]) &=& \frac{ (q_k-q_1) \delta([q_1, q_k]) + (q - q_k) \delta([q_k,q]) }{q-q_1}   \\
  &\sim_{k\to\infty}& \delta([q_1, q_k]).
\end{eqnarray*}
That means, $\underline{\delta}(\bP) = \overline{\delta}(\bP) = mn$.
\end{proof}

\subsection{Constructions for ordinary exponents (Theorem$^\star$~\ref{ThmOrd})}\label{qq1}

To prove Theorem$^\star$~\ref{ThmOrd} we construct a family of templates $\mathcal{P}_{v,w}$ with two main integer parameters $v,w$. In Subsection \ref{ssa} we define all necessary parameters. In Subsection \ref{ssb} we construct the proper family  $\mathcal{P}_{v,w}$ of templates.
Choosing parameters accordingly will allow to consider all cases of Theorem$^\star$~\ref{ThmOrd}: we deal with property (B) in \ref{ParamOrdB}, property (C) in \ref{ParamOrdC} and both property (B) and (C) in \ref{ParamOrdBC}.\\

\subsubsection{Setting parameters}\label{ssa}
Dimensions $m\ge1$ and $n\ge1$ are fixed. Choose two main integer parameters { $0\le v < n$} and $1 \le w \le \min(m,n-v)$. Consider the main slopes
\begin{eqnarray}
\sigma_c = \sigma_c (v)&:= &\frac{-nm + mv}{m+v},\\
\sigma_b = \sigma_b (v,w) &:= &\frac{-n(m-w) +m(v+w)}{m+v}.
 \end{eqnarray}
 Note that $\sigma_c$ is always strictly negative while $\sigma_b$ may have any sign. Given $\gamma = n-v-2w$, consider the auxiliary slopes
\begin{eqnarray}
\sigma_1 &:=& \begin{cases} -n &\textrm{ if } \gamma \le 0 \\ \frac{-wn +\gamma m}{n-w-v} &\textrm{ if } \gamma > 0 \end{cases},\\
\sigma_2 &:=& \begin{cases}\frac{\gamma n + (w+\gamma) m}{w} & \textrm{ if } \gamma < 0\\  m &\textrm{ if } \gamma \ge 0 \\  \end{cases},\\
\sigma_3 &:= & \frac{-wn +(n-v-w)m}{n-v}.
\end{eqnarray}

For an index $k \in \mathbb{N}$, consider a pair of parameters $\lambda_k \in [\sigma_c,\min(0,\sigma_b) ]$ and $\mu_k := \frac{m+v+w}{n-v-w} \lambda_k \ge \lambda_k$. Choose $q_k \gg 1$ and $p_k\ge q_k +\log(q_k)$. {This choice determines} $$p_k' := p_k\left(1+ \frac{\mu_k-\lambda_k}{\sigma_1-\sigma_2}\right)\ge p_k$$ and $$q_k^{\max}=\begin{cases} \infty \textrm{ if } \sigma_b \le 0 \\ p'_k + \frac{(m-\sigma_c)(p_k-q_k) + (m-\sigma_2)(p'_k-p_k)}{\sigma_3-\sigma_b} \end{cases}.$$ Choose $r_k\in[p'_k , q_k^{\max}]$. Finally, $q'_k$ is determined by $$q'_k:= r_k + \frac{(p_k-q_k)(m+n) + (p_k'-p_k)(\sigma_2-\sigma_b) + (r_k-p_k')(\sigma_3-\sigma_b)}{n+\sigma_b}.$$

Parameters satisfying the conditions of this section are called \emph{admissible}.\\

 {Here slopes are denoted by $\sigma$ and parameters $\lambda_k$ and $\mu_k$ describe the configuration at $q_k$. The three parameters $p_k$, $q_k$ and $r_k$ are freely chosen in some intervals while $p_k'$, $q_k'$ and $q_k^{\max}$ are uniquely determined by the choice of the previous triple.}

 \subsubsection{Construction}\label{ssb}
Given admissible main integer parameters $v,w$ and real parameters $\lambda_k,q_k,p_k,r_k$, we first construct on the segment $[q_k,q_k']$ an elementary template $\bP_{v,w}(\lambda_k,q_k,p_k,r_k)$. We explain after the construction how they can be concatenated to form a family of templates $\cP_{v,w}$ depending on both main parameter $v,w$ and a pair of sequences $(p_k)_{k\in \bN}, (r_k)_{k\in \bN}$.


   Given the initial condition at $q_k$, it is enough to describe the behavior of the slopes of each components of $\bP_{v,w}(\lambda_k,q_k,p_k,r_k) = (P_1,..., P_{m+n})$ on all subintervals. The constructed template has a different shape depending on the sign of $\sigma_b$ and is depicted in Figures \ref{FigOrd1} ($\sigma_b>0$) and Figure \ref{FigOrd2} ($\sigma_b\le0$).\\

At $q_k$, the initial condition is set to be $P_1(q_k)= \cdots = P_{m+v+w}(q_k) = \lambda_k q_k$ and $P_{m+v+w+1}(q_k) = \cdots = P_{m+n}(q_k) = \mu_k q_k$. By definition of $\mu_k$, the sum $\sum_{i=1}^{m+n} P_i(q_k) =0$. We describe slopes on all subintervals:
\begin{itemize}
\item On $[q_k,p_k]$, $P_1= \cdots = P_{m+v}$ have slope $\sigma_c$ and both $P_{m+v+1}= \cdots = P_{m+v+w}$ and $P_{m+v+w+1} = \cdots = P_{m+n}$ have slope $m$. 
\item  On $[p_k,q'_k]$, $P_1 = \cdots = P_{m+v}$ have slope $\sigma_b$, while the other components have a slightly more subtle behavior, namely : 
\begin{itemize}
\item On $[p_k,p_k']$, $P_{m+v+w+1} = \cdots = P_{m+n}$ have slope $\sigma_1$ and $P_{m+v+1}=\cdots = P_{m+v+w}$ have slope $\sigma_2$. Note that $p_k'$ is defined so that $P_{m+v+1}(p_k') = P_{m+n}(p_k')$.
\item On $[p_k',r_k]$, $P_{m+v+1} = \cdots = P_{m+n}$ have slope $\sigma_3$.
\item On $[r_k,q_k']$, $P_{m+v+w+1} = \cdots = P_{m+n}$ have slope $m$ and $P_{m+v+1}=\cdots = P_{m+v+w}$ have slope $-n$.
\end{itemize}
\end{itemize}

Note that $q_k'$ was defined purposely to have $P_1(q_k')= \cdots = P_{m+v+w}(q_k') : = \lambda_k' q_k'$, where $\min(0,\sigma_b) \ge \lambda_k' \ge \sigma_c$.
  So we constructed  $\bP_{v,w}(\lambda_k,q_k,p_k,r_k)$ on the interval $[q_k,q_k']$.\\
  
  We now concatenate this elementary templates. Note that $q_k'$ and $\lambda_k'$ are determined by the choice of $p_k$ and $r_k$. Hence, fixing $q_1 \gg1$ and $\lambda_1 \in  [\sigma_c,\min(0,\sigma_b) ]$, we can construct inductively a template $\bP_{v,w}((p_k)_{k\in \bN}, (r_k)_{k\in \bN})$ given by two sequences $(p_k)_{k\in \bN}, (r_k)_{k\in \bN}$ satisfying at each step $k$ the conditions of previous section and $p_{k+1}\ge q_{k+1}:=q_k'$. The freedom of this two sequences determines for all $k\in\bN$ the values of $\lambda_{k+1}$ and $q_{k+1}$. We denote by $\cP_{v,w}$ the set of all templates $\bP_{v,w}((p_k)_{k\in \bN}, (r_k)_{k\in \bN})$ for admissible sequences $(p_k)_{k\in \bN}, (r_k)_{k\in \bN}$.

\begin{figure}[h!]
 \begin{center}
 \begin{tikzpicture}[scale=0.5]
 
 \draw[black, thick] (0,0) -- (2,-4) node [midway,below] {$\sigma_c$};
 \draw[black,thick] (0,0) -- (2,4);
  \draw[black,thick] (0,3) -- (2,7);
 
 \draw[black, thick] (2,4) -- (4,5) node [midway,above] {$\sigma_2$};
 \draw[black, thick] (2,7) -- (4,5) node [midway,above] {$\sigma_1$};
 
 \draw[black,thick] (4,5) -- (6,4) node [midway,above] {$\sigma_3$};
 
  \draw[black,thick] (2,-4) -- (8,-1) node [midway,above] {$\sigma_b$};
  \draw[black,thick] (6,4) -- (8,-1) ;
  \draw[black,thick] (6,4) -- (8,8) ;

 \draw[black, thin, ->] (-1,1) -- (14,1);
 
  \draw[black, dotted, thin] (6,4) -- (13,1);
  \draw[black, dotted, thin] (8,-1) -- (13,1);
 
  \draw[black, dashed, thin] (0,3) -- (0,-5) node [below] {$q_k$};
   \draw[black, dashed, thin] (2,7) -- (2,-5) node [below] {$p_k$};
   \draw[black, dashed, thin] (4,5) -- (4,-5) node [below] {$p'_k$};
   \draw[black, dashed, thin] (6,4) -- (6,-5) node [below] {$r_k$};
   \draw[black, dashed, thin] (8,8) -- (8,-5) node [below] {$q'_k$};
   \draw[black, dashed, thin] (13,1) -- (13,-5) node [below] {$q_k^{\max}$};

 \end{tikzpicture}
 \end{center}
 \caption{Construction of $\bP_{v,w}(p_k,r_k)$ when $\sigma_b > 0$. Slopes different from $-n$ and $m$ are labelled.}\label{FigOrd1}
 \end{figure}

\begin{figure}[h!]
 \begin{center}
 \begin{tikzpicture}[scale=0.5]
 
 \draw[black, thick] (0,0) -- (2,-4) node [midway,below] {$\sigma_c$};
 \draw[black,thick] (0,0) -- (2,4);
 \draw[black,thick] (0,3) -- (2,7);
 \draw[black, thick] (2,7) -- (3,5) node [midway,above] {$\sigma_1$};
 \draw[black, thick] (2,4) -- (3,5) node [midway,above] {$\sigma_2$};
 \draw[black,thick] (3,5) -- (6,6) node [midway,above] {$\sigma_3$};
 \draw[black,thick] (2,-4) -- (9,-5) node [midway,above] {$\sigma_b$};
 \draw[black,thick] (6,6) -- (9,-5);
 \draw[black,thick] (6,6) -- (9,12) ;
 
  \draw[black, thin, ->] (-1,1) -- (14,1);
 
   \draw[black, dashed, thin] (0,3) -- (0,-6) node [below] {$q_k$};
   \draw[black, dashed, thin] (2,7) -- (2,-6) node [below] {$p_k$};
   \draw[black, dashed, thin] (3,5) -- (3,-6) node [below] {$p'_k$};
   \draw[black, dashed, thin] (6,6) -- (6,-6) node [below] {$r_k$};
   \draw[black, dashed, thin] (9,12) -- (9,-6) node [below] {$q'_k$};
   \
 
 \end{tikzpicture}
 \end{center}
 \caption{Construction of $\bP_{v,w}(p_k,r_k)$ when $\sigma_b\le0$. Slopes different from $-n$ and $m$ are labelled.}\label{FigOrd2}
 \end{figure}

\subsubsection{Properties of templates from the family $\mathcal{P}_{v,w}$ and proof of Theorem$^\star$~\ref{ThmOrd}}\label{qq2}
The next lemma provides useful properties of elements of the family of templates $\mathcal{P}_{v,w}$.

\begin{lemma}\label{ConstrLemma}
Fix admissible main parameters $v,w$. When $\bP$ ranges through $\mathcal{P}_{v,w}$, then $\liminf_{q\to \infty} \frac{P_1(q)}{q}$ ranges through the full interval $[\sigma_c, \min(0,\sigma_b)]$.
\end{lemma}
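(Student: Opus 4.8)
The plan is to analyse the quantity $\liminf_{q\to\infty} P_1(q)/q$ directly from the explicit construction of the elementary templates $\bP_{v,w}(\lambda_k,q_k,p_k,r_k)$. First I would observe that $P_1$ attains its local minima exactly at the points $q_k$ (and $q_k'=q_{k+1}$), where by construction $P_1(q_k)=\lambda_k q_k$. On the concatenation intervals $[q_k,q_{k+1}]$ the component $P_1$ first goes down with slope $\sigma_c<0$ on $[q_k,p_k]$ and then up with slope $\sigma_b$ on $[p_k,q_k']$; hence the running infimum of $P_1(q)/q$ over $[q_1,\infty)$ is governed by the sequence $\lambda_k$ together with the intermediate dip on $[q_k,p_k]$ where $P_1(q)=\lambda_kq_k+\sigma_c(q-q_k)$, whose ratio $P_1(q)/q$ decreases from $\lambda_k$ towards a value bounded below by $\sigma_c$ when $p_k$ is large relative to $q_k$. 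So the first step is to show $\liminf_{q\to\infty}P_1(q)/q$ always lies in $[\sigma_c,\min(0,\sigma_b)]$: the upper bound comes from $P_1(q_k)/q_k=\lambda_k\le\min(0,\sigma_b)$ for all $k$, and the lower bound from the fact that $P_1(q)\ge \sigma_c q + (\lambda_k-\sigma_c)q_k\ge \sigma_c q$ on $[q_k,p_k]$ (since $\lambda_k\ge\sigma_c$) together with $P_1$ being nondecreasing on $[p_k,q_{k+1}]$ when $\sigma_b\ge 0$, or the analogous bookkeeping when $\sigma_b<0$.

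Next I would establish surjectivity onto the whole interval. Given a target value $\tau\in[\sigma_c,\min(0,\sigma_b)]$, I would choose the sequences $(p_k)_{k\in\bN},(r_k)_{k\in\bN}$ — equivalently, via the recursion, the sequence $(\lambda_k)_{k\in\bN}$ — so that $\liminf_k \lambda_k$ hits the desired behaviour and the intermediate dips do not drag the $\liminf$ below $\tau$. Concretely: to realise $\tau=\min(0,\sigma_b)$ one takes $\lambda_k\equiv\min(0,\sigma_b)$ (the admissible endpoint) and keeps $p_k-q_k$ comparatively small, say $p_k=q_k+\log q_k$, so that the dip on $[q_k,p_k]$ is of lower order and $P_1(q)/q\to\tau$; to realise $\tau=\sigma_c$ one lets $p_k/q_k\to\infty$ so the dip ratio tends to $\sigma_c$ infinitely often; and for intermediate $\tau$ one interpolates — pick $\lambda_k$ oscillating with $\liminf_k\lambda_k$ close to $\tau$ (or constant equal to $\tau$ when $\sigma_c<\tau<\min(0,\sigma_b)$) and control the ratio $p_k/q_k$ so the dips stay above $\tau$. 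Here I would need to check that the recursively determined $\lambda_{k+1}'$ stays in the admissible range $[\sigma_c,\min(0,\sigma_b)]$ at every step, which follows from the defining identities for $q_k'$ and the linear slope constraints built into the construction, and that the choices of $p_k,r_k$ remain admissible (in particular $r_k\le q_k^{\max}$ when $\sigma_b>0$). Finally, a short continuity/monotonicity argument in $\tau$ — or simply exhibiting the explicit family above — gives every value in the closed interval.

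The main obstacle I anticipate is the intertwining of the two mechanisms that push $P_1(q)/q$ down: the value $\lambda_k$ itself and the linear dip of slope $\sigma_c$ on $[q_k,p_k]$, whose minimum ratio depends on the ratio $p_k/q_k$ and on $\lambda_k$ simultaneously. One must verify that these can be tuned independently enough to land the $\liminf$ on any prescribed value without accidentally overshooting below $\sigma_c$ or, when aiming for the top endpoint, without the dips preventing the $\liminf$ from reaching $\min(0,\sigma_b)$. A secondary technical nuisance is the case split on the sign of $\sigma_b$ (Figures \ref{FigOrd1} and \ref{FigOrd2} have genuinely different shapes after $p_k$), so the bookkeeping of where $P_1$ is increasing versus decreasing must be carried out separately, though in both cases $P_1$ is $\sigma_c$-sloped on $[q_k,p_k]$ and no worse than flat-then-up afterwards on the $P_1$-block. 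I expect no deep idea is needed beyond careful interval-by-interval estimates, so the proof should be a direct, if slightly tedious, verification.
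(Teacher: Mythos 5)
Your proposal takes essentially the same route as the paper's proof: in both, the point is that the $\liminf$ is governed by the dips of $P_1(q)/q$ at $p_k$, and one steers the $\liminf$ by fixing $\lambda_k$ at (or near) the right endpoint of $[\sigma_c,\min(0,\sigma_b)]$ while sweeping $p_k/q_k$ from $1+o(1)$ up to $\infty$, which drags the dip value $P_1(p_k)/p_k=(q_k/p_k)\lambda_k+(1-q_k/p_k)\sigma_c$ through the whole interval. Two slips in your write-up are worth flagging. First, your opening sentence is wrong: $q_k$ and $q_{k+1}$ are local \emph{maxima} of $P_1$ (and of $P_1(q)/q$), not minima. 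Indeed $P_1$ falls with slope $\sigma_c<0$ on $[q_k,p_k]$, and on the preceding block $[p_{k-1},q_k]$ the ratio $P_1(q)/q=\sigma_b+(P_1(p_{k-1})-\sigma_b p_{k-1})/q$ is increasing because $P_1(p_{k-1})/p_{k-1}$ is a strict convex combination of $\lambda_{k-1}\le\sigma_b$ and $\sigma_c<\sigma_b$, hence strictly below $\sigma_b$. The local minimum of $P_1(q)/q$ sits at $p_k$, which is what your next sentence actually uses; the remainder of your argument is unaffected by the misstatement. Second, when $\sigma_b<0$ you cannot set $\lambda_k\equiv\sigma_b$ exactly: the recursion gives $\lambda_{k+1}q_{k+1}=\sigma_b q_{k+1}+(\lambda_k-\sigma_b)q_k+(\sigma_c-\sigma_b)(p_k-q_k)$, so $\lambda_{k+1}<\sigma_b$ strictly, only approaching $\sigma_b$ as $r_k\to\infty$. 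The paper records this as $\lambda_k=\sigma_b+o(1)$, which is what is actually needed, and you should too. Beyond those two corrections you acknowledge but do not carry out the admissibility check that the recursion keeps $\lambda_{k+1}\in[\sigma_c,\min(0,\sigma_b)]$; the paper disposes of this in the line where $q_k'$ is defined (``where $\min(0,\sigma_b)\ge\lambda_k'\ge\sigma_c$''), and the same one-line observation would close your gap.
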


\begin{proof}
{ Note that $P_1$ has slope either $\sigma_c$ or $\sigma_b$, hence $\frac{P_1(q)}{q} \in [\sigma_c, \min(0,\sigma_b)]$. One can always choose $r_k$ large enough so that $\lambda_{k+1} = \min(0,\sigma_b) +o(1)$. For $p_k=q_k +\log(q_k)$, we have $\frac{P_1(p_k)}{p_k} = \lambda_k +o(1) = \min(0,\sigma_b) +o(1)$, while for $p_k \gg q_k$, we have $\frac{P_1(p_k)}{p_k} = \sigma_c +o(1)$. Since our parameters are allowed to vary continuously, $\liminf_{q\to \infty} \frac{P_1(q)}{q}$ ranges through the full interval $[\sigma_c, \min(0,\sigma_b)]$. }
\end{proof}

The next lemma explains the necessary and sufficient conditions on the parameters of the constructed elements of $\cP_{v,w}$ to satisfy conditions (b) and/or (c).

\begin{lemma}\label{ConstrLemma2}
Let  $\bP \in \mathcal{P}_{v,w}$. Then  
\begin{itemize}
\item $\bP$ satisfies (b) $\Leftrightarrow \sigma_b <m \Leftrightarrow w < m $,
\item $\bP$ satisfies (c) $\Leftrightarrow \sigma_c >-n \Leftrightarrow v\ge1$,
\item $\bP$ does not satisfy (a).
\end{itemize}
In addition 
\begin{itemize}
\item If $\sigma_b>0$, there exists another family of templates 
$\tilde{\mathcal{P}}_{v,w}$ such that 
$$
\{ \bxi\in \cM(\bP ): \bP \in \tilde{\mathcal{P}}_{v,w}\}
$$
 have full packing dimension.
\end{itemize}
\end{lemma}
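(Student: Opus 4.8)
The plan is to verify the three bullet points of Lemma~\ref{ConstrLemma2} by direct inspection of the construction, since all the needed data (the slopes $\sigma_c,\sigma_b,\sigma_1,\sigma_2,\sigma_3$ and the lengths of the subintervals) were fixed explicitly in Subsection~\ref{ssb}. First I would recall the interpretations (b) and (c) from Subsection~\ref{123ss}: property (b) asks that $P_1$ has slope $m$ only on bounded intervals, and (c) that $P_1$ has slope $-n$ only on bounded intervals. In the construction, $P_1=\cdots=P_{m+v}$ has slope $\sigma_c$ on $[q_k,p_k]$ and slope $\sigma_b$ on $[p_k,q_k']$; those are the only two slopes $P_1$ ever takes. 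Hence $P_1$ has extremal slope $m$ somewhere iff $\sigma_b=m$, and extremal slope $-n$ somewhere iff $\sigma_c=-n$. So (b) holds iff $\sigma_b<m$, and (c) holds iff $\sigma_c>-n$. The translation $\sigma_b<m\Leftrightarrow w<m$ and $\sigma_c>-n\Leftrightarrow v\ge1$ is then an elementary computation from the definitions
\[
\sigma_c=\frac{-nm+mv}{m+v},\qquad \sigma_b=\frac{-n(m-w)+m(v+w)}{m+v}:
\]
one checks $\sigma_c=-n \Leftrightarrow -nm+mv=-n(m+v)\Leftrightarrow v=0$, and $\sigma_b=m\Leftrightarrow -n(m-w)+m(v+w)=m(m+v)\Leftrightarrow -n(m-w)=m(m-w)\Leftrightarrow w=m$ (using $m-w>0$ otherwise). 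I would also record that, since $P_1$ is linear on each of $[q_k,p_k]$ and $[p_k,q_k']$ which both have length $\ge \log(q_k)\to\infty$, Condition~U from Lemma~\ref{KeyLemma} is satisfied, so these properties transfer to every element of $\cP(\bP)$ and ultimately to the matrices produced by Proposition~\ref{prop}; this is what makes the stated equivalences meaningful rather than template-specific artifacts.

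For the third bullet, that $\bP$ does not satisfy (a): by the characterization in Subsection~\ref{123ss}, (a) is equivalent to $h_{\bxi,1}$ (here $P_1$) remaining at bounded distance from $0$. But $P_1(q_k)=\lambda_k q_k$ and on $[q_k,p_k]$ the slope is $\sigma_c<0$ with $p_k\ge q_k+\log q_k$, so $P_1(p_k)\le \lambda_k q_k + \sigma_c\log q_k\to-\infty$ as $k\to\infty$ (indeed $\lambda_k\le 0$ throughout, and $\sigma_c<0$ strictly). Thus $\liminf_{q\to\infty}P_1(q)=-\infty$, so $P_1$ is unbounded below and (a) fails. (In fact this also shows every such $\bP$ is singular, matching the remark after Theorem~\ref{ThmOrd}.)

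The last bullet — when $\sigma_b>0$, the existence of a variant family $\tilde{\mathcal P}_{v,w}$ with $\mathcal M$ of full packing dimension $mn$ — is the one requiring a genuine idea rather than bookkeeping, and I expect it to be the main obstacle. The strategy is the same as in the proof of Lemma~\ref{LemContr}: the packing dimension is governed by $\overline\delta(\bP)=\limsup_{q\to\infty}\delta([q_1,q])$ via Theorem~\ref{VarPrinc}, and by \eqref{Delta0} the contraction rate equals its maximal value $mn$ on any interval where the template is identically $\boldsymbol 0$. So I would modify the construction by inserting, after the end $q_k'$ of each elementary block, a long trivial stretch $[q_k', q_{k+1}]$ on which $\bP=\boldsymbol 0$, with $q_{k+1}$ chosen so large (say $q_{k+1}=(q_k')^2$, as in Subsection~\ref{beau}) that $q_k'=o(q_{k+1})$. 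Since the nontrivial part of each block occupies a length that is $o(q_k')$ relative to the cumulative interval, the average $\delta([q_1,q_{k+1}])$ is a weighted average dominated by the trivial stretches and tends to $mn$; hence $\overline\delta(\bP)=mn$ and $\dim_P\mathcal M(\mathcal P(\bP))=mn$. Two points need care: first, one must check that inserting these trivial stretches does not destroy properties (b) or (c) — it does not, because on a trivial stretch $P_1$ has slope $0\notin\{m,-n\}$, and the concatenation still satisfies the template conditions (i)--(iv) at the junction points since $\bP(q_k')=\boldsymbol 0$ by the endpoint computation; second, one should confirm that this lengthening is still compatible with the admissibility constraints on $q_k,p_k,r_k$ in Subsection~\ref{ssa} (which only bound $p_k,r_k$ from one side and leave $q_{k+1}=q_k'$ free to be chosen before the next block — here we simply decouple $q_{k+1}$ from $q_k'$, which is harmless). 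Finally, per Remark~\ref{RmkAlg}, full packing dimension $mn>mn-1$ lets us additionally require algebraically independent entries, which we record for completeness.
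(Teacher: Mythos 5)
Your treatment of the first three bullets is correct and matches the paper's (very terse) argument: $P_1$ takes only the slopes $\sigma_c$ on $[q_k,p_k]$ and $\sigma_b$ on $[p_k,q_k']$, both on intervals of unbounded length (as $p_k$ and $r_k$ range over admissible values), so (b) fails exactly when $\sigma_b=m$ and (c) fails exactly when $\sigma_c=-n$, and the arithmetic reductions $\sigma_b<m\Leftrightarrow w<m$, $\sigma_c>-n\Leftrightarrow v\ge1$ follow from the formulas. The failure of (a) via $P_1(p_k)\le \lambda_kq_k+\sigma_c\log q_k\to-\infty$ with $\lambda_k\le 0$ and $\sigma_c<0$ is also what the paper intends by the remark that $p_k\ge q_k+\log(q_k)$ prevents (a).

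The gap is in the last bullet. You assert that "$\bP(q_k')=\boldsymbol 0$ by the endpoint computation", and then glue trivial stretches on $[q_k',q_{k+1}]$. But for a generic $\bP\in\cP_{v,w}$ this is false: the construction only guarantees $P_1(q_k')=\cdots=P_{m+v+w}(q_k')=\lambda_k'q_k'$ with $\lambda_k'\in[\sigma_c,\min(0,\sigma_b)]$, and $\lambda_k'$ is typically nonzero (this is precisely the freedom used in Lemma~\ref{ConstrLemma}). To get $\bP(q_k')=\boldsymbol 0$ one must make the extremal choice $r_k=q_k^{\max}$, which drives $\lambda_{k+1}$ to $0$ and makes all components meet at $q_k^{\max}=q_k'$; and this is precisely where $\sigma_b>0$ is essential, since $q_k^{\max}=\infty$ when $\sigma_b\le0$. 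Your proposal never chooses this specific subfamily and never uses the hypothesis $\sigma_b>0$, so as written the trivial stretches cannot be continuously concatenated and the argument does not go through. Once that choice $\lambda_k\equiv 0$, $r_k=q_k^{\max}=q_k'$, $q_{k+1}=(q_k')^2$ is made explicit, your contraction-rate estimate (imitating Lemma~\ref{LemContr}) correctly gives $\overline{\delta}=mn$ and the claim follows.
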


\begin{proof}
For the last observation, note that under this condition we can choose $\lambda_k = 0$ and $r_k = q_k^{\max}=q_k'$ for every elementary template. Hence we can extend these elementary templates by the trivial template on intervals $[q_k^{\max},(q_k^{\max})^2]$ and set $q_{k+1}=(q_k^{\max})^2$. This constructs a variant family of templates $\tilde{\mathcal{P}}_{v,w}$. Then observations similar to the proof of Lemma \ref{LemContr} provide full packing dimension.

Property (a) is prevented by $p_k \ge q_k + \log(q_k)$. The rest of the proof follows from the construction because the first component of the template has slopes either $\sigma_b$ or $\sigma_c$, on intervals of arbitrarily large length.
\end{proof}

\begin{remark} Easy computations show that if there exists $\alpha,\beta$ such that $\liminf_{t\to\infty} \frac{P_1(t)}{t} = \frac{-\alpha n +\beta m}{\alpha + \beta}$, then by \eqref{LimPOmega} the associated ordinary exponent is $\frac{\alpha}{\beta}$. Also, in the constructions considered, choosing $r_k$ to ensure $\lambda_k = 0$ provides non-singular matrices $\bxi\in \cM(\bP)$. Otherwise, we may always choose $r_k$ to ensure $\lambda_k \ge \log(q_k)$ and obtain singular matrices $\bxi\in \cM(\bP)$ (obviously loosing full packing dimension).
\end{remark}

\subsubsection{First statement of Theorem$^\star$~\ref{ThmOrd}}\label{ParamOrdB}
Recall that $m\ge2$. We want to construct elements in the set $S_{B,n,m}(\omega)$. By Lemma \ref{ConstrLemma2}, templates from the constructed family $\cP_{v,w}$ have property (b) when $w<m$. To obtain the full admissible interval $\omega \in [\frac{m}{n}, \infty]$, depending on the dimensions $m$ and $n$, we may have to consider various pairs of parameters $(v,w)$.\\

We first chose parameters $v=0$ and $w=\min(m-1,n)<m$. Observe that with this choice, 
\[\begin{cases}
\sigma_c &= -n\\[2mm]
{ m > \sigma_b }&{= \begin{cases} \frac{-n+ (m-1)m}{m} \textrm{ if } w=m-1 \\ \frac{n^2}{m} \textrm{ if } w=n\end{cases}}
\end{cases}.\]
In particular, $\sigma_b>0$ if $n<m(m-1)$. Hence if $n<m(m-1)$, by Lemmata \ref{ConstrLemma} \& \ref{ConstrLemma2} , then templates of the family $\cP_{0,\min(m-1,n)}$ have property (b) and $\liminf_{q\to \infty} \frac{P_1(q)}{q}$ reaches any value in $[-n, 0]$. By \eqref{LimPOmega}, the associated ordinary exponent is ranging through $[\frac{m}{n},\infty]$. 
Hence for any fixed exponent $\omega\in [\frac{m}{n},\infty]$, via Proposition \ref{prop} we obtain a set of matrices of full packing dimension, with ordinary exponent $\omega$ and satisfying (B) but not (A).\\
Thus, under condition $n<m(m-1)$, the full interval $[\frac{m}{n}, \infty]$ is reached with a single pair of parameters $(v,w) = (0,\min(m-1,n))$. Otherwise, we need more.\\

Suppose now that $n\ge m(m-1)$. First, consider the same pair of parameters $(v,w) = (0,\min(m-1,n))$. By Lemmata \ref{ConstrLemma} \& \ref{ConstrLemma2}, templates in the family $\cP_{0,\min(m-1,n)}$ have 
\begin{equation}\label{part3}
\liminf_{q\to \infty} \frac{P_1(q)}{q} \textrm{ ranges through } [-n,  \frac{-n+ (m-1)m}{m}]
\end{equation}
 and have property (b). Via Proposition \ref{prop} and \eqref{LimPOmega}, for any fixed exponent $\omega \in [\frac{1}{m-1},\infty]$, we obtain a set of matrices with ordinary exponent $\omega$ and satisfying (B) but not (A). Note that if the exact equality $n = m(m-1)$, we have $\frac{m}{n} = \frac{1}{m-1}$ and we get the desired interval - but without full packing dimension since $\sigma_b=0$. \\

Now we consider more pairs of parameters $(v,w)$ to fill the gap $[\frac{m}{n},\frac{1}{m-1}]$. Consider $l_n\ge 2$ such that $m^{l_n-1}-m < n \le m^{l_n} -m$. Consider the family of pairs of parameters indexed by $l\in[2, l_n]$ given by
\[\begin{cases}
v_l &= m^l-m\\
w_l &= \min(m-1,m+n-m^l) < m
\end{cases}.\]

Note that by definition of $l_n$, for $l<l_n$ we always have $w_l = \min(m-1,m+n-m^l) = m-1$. In this case, the family of templates $\cP_{m^l-m,m-1}$ with
\[\begin{cases}
\sigma_{c,l} &= \frac{-mn +m(m^l-m)}{m+m^l-m} = \frac{-n+m^l-m}{m^{l-1}} > -n \\[2mm]
\sigma_{b,l} &= \frac{-n+m(m^l-m+m-1)}{m+m^l-m} = \frac{-n+m^{l+1}-m}{m^l} < m
\end{cases}\]
provides by Lemmata \ref{ConstrLemma} \& \ref{ConstrLemma2} templates satisfying (b) but not (a) with 
\begin{equation}\label{part1}
\liminf_{q\to\infty} \frac{P_1(q)}{q} \textrm{ ranging through }\left[\frac{-n+m^l-m}{m^{l-1}}, \frac{-n+m^{l+1}-m}{m^l}\right].\end{equation}
Consider now the case $l=l_n$, and the family of templates $\cP_{v_{l_n},w_{l_n}}$, it has 
\[\begin{cases}
\sigma_{c,l_n} &= \frac{-n+m^l_n-m}{m^{l_n-1}} \\[4mm]
\sigma_{b,l_n} &= \begin{cases} \frac{n(n-(m^{l_n-m}-m))}{m^{l_n}} > 0  \textrm{ if } \min(m-1,m+n-m^{l_n})=n+m-m^{l_n} \\ \\
 \frac{-n+m^{l_n+1}-m}{m^{l_n}} \ge 0 \textrm{ if } \min(m-1,m+n-m^{l_n}) = m-1
\end{cases}
\end{cases}.\]  
and
\begin{equation}\label{part2}
\liminf_{q\to\infty} \frac{P_1(q)}{q} \textrm{ ranges through }\left[\frac{-n+m^{l_n}-m}{m^{l_n-1}} ,0\right].
\end{equation}
Hence, considering all these pairs of parameters, by Lemmata \ref{ConstrLemma} \& \ref{ConstrLemma2} we are able to construct templates satisfying (b) but not (a), with $\liminf_{q\to\infty} \frac{P_1(q)}{q}$ ranging through the union of intervals \eqref{part3},\eqref{part1},\eqref{part2} :
 $$[-n, 0] = \left[-n,  \frac{-n+ (m-1)m}{m}\right] \cup  \bigcup_{l=2}^{l_n-1} \left[\frac{-n+m^l-m}{m^{l-1}}, \frac{-n+m^{l+1}-m}{m^l}\right] \cup \left[\frac{-n+m^{l_n}-m}{m^{l_n-1}} ,0\right].$$
Via Proposition \ref{prop} and \eqref{LimPOmega}, for any fixed exponent $\omega\in [\frac{m}{n},\infty]$, we obtain a set of matrices $\bxi$ with ordinary exponent $\omega(\bxi)=\omega$ satisfying (B) but not (A).

 Note that our constructed templates provide strictly positive Hausdorff and packing dimension.\\
\begin{remark}\label{Rmk1}
For $2\le l \le l_n$, the templates of the family $\cP_{v_l,w_{l}}$ also satisfy (c) because $v_l \ge1$. This observation will be used for the proof of the Third statement of Theorem \ref{ThmOrd}.
\end{remark}

\subsubsection{Second statement of Theorem$^\star$~\ref{ThmOrd}}\label{ParamOrdC}

Recall that $n\ge2$. We want to construct elements of $S_{C,n,m}(\omega)$. By Lemma \ref{ConstrLemma2}  constructed templates of the family $\cP_{v,w}$ have property (c) under the condition $v \ge1$. Here a single pair of main parameters $v,w$ is enough.\\
Chose main parameters $v=1$ and $w=\min(m,n-1)$. Observe that with this choice, 
\[\begin{cases}
\sigma_c &= \frac{-n m + m}{m+1}> -n \\[2mm]
\sigma_b &= \begin{cases} {m >0 \textrm{ if } w=m }\\{ \frac{n(n-1)}{m+1} >0 \textrm { if } w=n-1} \end{cases}
\end{cases}.\]
Hence by Lemmata \ref{ConstrLemma} \& \ref{ConstrLemma2}, templates of the family $\cP_{1,\min(m,n-1)}$ have $\liminf \frac{P_1(q)}{q}$ reaching any value in $[\frac{-n m + m}{m+1}, 0)$ and has property (c) but not (a). Via Proposition \ref{prop} and \eqref{LimPOmega}, for any fixed exponent $\omega'\in [\frac{m}{n},m]$, we obtain a set of matrices of full packing dimension, with ordinary exponent $\omega'$ and satisfying (C) but not (A).

\subsubsection{Third statement of Theorem$^\star$~\ref{ThmOrd}}\label{ParamOrdBC}
 Recall that $\min(m,n)\ge 2$.  We want to construct elements in the intersection $S_{B,n,m}(\omega) \cap S_{C,n,m}(\omega)$. By Lemma \ref{ConstrLemma2} templates of the family $\cP_{v,w}$ have property (b) and (c) under the conditions $w<m$ and $v \ge1$. Again, various pairs of parameters $(v,w)$ are requested, depending on dimensions, to reach the full claimed interval.\\

Chose first main parameters $v=1$ and $w=\min(m-1,n-1)<m$. Observe that with this choice, 
\[\begin{cases}
\sigma_c &= \frac{-mn +m}{m+1} > -n \\[4mm]
\sigma_b &=  \begin{cases} \frac{-n +m^2}{m+1}  \textrm{ if } \min(m-1,n-1)=m-1 \\ \\
 \frac{-n(m-n+1) +mn}{m+1} = \frac{n^2-1}{m+1} > 0 \textrm{ if } \min(m-1,n-1) = n-1
\end{cases}
\end{cases}.\]
Further, note that $\sigma_b >0$ if and only if $n<m^2$. Hence if $n<m^2$, by Lemmata \ref{ConstrLemma} \& \ref{ConstrLemma2}, templates of the family $\cP_{1,\min(m-1,n-1)}$ have properties (b) and (c) and $\liminf_{q\to\infty} \frac{P_1(q)}{q}$ reaches any value in $[\frac{-mn +m}{m+1}, 0)$. That is, by \eqref{LimPOmega}, the associated ordinary exponent is ranging through $[\frac{m}{n},m]$. 
Hence for any fixed exponent $\omega'\in [\frac{m}{n},m]$, via Proposition \ref{prop} we obtain a set of matrices of full packing dimension, with ordinary exponent $\omega'$ and satisfying (B) and (C) but not (A).\\

If $n\ge m^2 > m^2 -m$, previous choice of main parameters $(v,w)=(1, m-1)$ provides a family of templates $\cP_{1,m-1}$ with 
\begin{equation}\label{Part1}
\liminf_{q\to\infty} \frac{P_1(q)}{q} \textrm{ ranging through }\left[ \frac{-mn+m}{m+1},\frac{-n+m^2}{m+1} \right].\end{equation}
 Another choice of main parameters $(v,w)=(2, m-1)$ provides a family of templates $\cP_{2,m-1}$ with 
 \begin{equation}\label{Part2}
 \liminf_{q\to\infty} \frac{P_1(q)}{q} \textrm{ ranging through } \left[ \frac{-mn+2m}{m+2},\frac{-n+m^2+m}{m+2} \right].\end{equation}
 Then, consider again $l_n\ge 2$ such that $m^{l_n-1}-m < n \le m^{l_n} - m$, we showed (see Remark \ref{Rmk1}) that templates with parameters $(v,w)=(v_l,w_l) =( m^{l}-m,\min(m-1,m+n-m^{l}))$ with $2\le l \le l_n$ satisfy both (b) and (c) but not (a), with 
 \begin{equation}\label{Part3}
 \liminf_{q\to\infty} \frac{P_1(q)}{q}\textrm{ ranging through }\left[\frac{-n+m^2-m}{m}, 0\right).\end{equation}
To summarize, we obtain families of templates satisfying both (b) and (c) but not (a) with $\liminf_{q\to\infty} \frac{P_1(q)}{q}$ ranging through the union \eqref{Part1},\eqref{Part2},\eqref{Part3}:
$$ \left[ \frac{-mn+m}{m+1},0\right) = \left[ \frac{-mn+m}{m+1},\frac{-n+m^2}{m+1} \right] \cup  \left[ \frac{-mn+2m}{m+2},\frac{-n+m^2+m}{m+2} \right] \cup \left[\frac{-n+m^2-m}{m}, 0\right) $$
By \eqref{LimPOmega}, the associated ordinary exponent is ranging through $[\frac{m}{n},m]$. Hence for any fixed exponent $\omega'\in [\frac{m}{n},m]$, via Proposition \ref{prop} we obtain a set of matrices with ordinary exponent $\omega'$ and satisfying (B) and (C) but not (A).\\
\qed

\subsection{Constructions for uniform exponents (Theorem$^\star$~\ref{Main})}\label{qq3}
To secure that $\hat{\omega}$ is as requested, in lights of \eqref{LimPOmega} we construct elementary {$(m,n)$-templates}  $\bQ_k$ on $[q_k, q_{k+1}]$ such that $Q_1(q_k) = \lambda_k q_k$ are the local maxima and  $\lambda_k \to_{k\to\infty} \frac{m-n\hat{\omega}}{1+\hat{\omega}}$. For some $q_k < p_k < q_{k+1}$, $Q_1$ has negative slope on $[q_k,p_k]$ and positive or constant slope on $[p_k,q_k]$.\\
Here, we consider different templates for construction related to (b) or (c).\\

\subsubsection{ Construction related to property (B) }\label{qq5}
Set $u = \min(\lfloor\frac{m+1}{2}\rfloor,n+1) \ge 2$ and the main parameter $0\le v \le n +1-u$. The requirement $u\ge2$ implies in particular $m\ge 3$. The case $m=2$ is treated as a byproduct in the next section.\\

Consider the main slopes 
\begin{eqnarray}
\sigma_c &= &\frac{-nu + mv}{u+v} ,\\
\sigma_b &= &\frac{-n +m(u+v-1)}{u+v} <m
 \end{eqnarray}
 and the auxiliary slopes
$$\begin{array}{ll}
\sigma_1 =  \frac{-n(m-u) + m (n-v-u+1)}{m+n -2u-v+1},   & \sigma_1' = \frac{-n(m-2u+1) + m (n-v-u+1)}{m+n -3u-v+2}   ,\\[4mm]
\sigma_2 = \frac{-n(m-u) +m(n-v)}{m+n-u-v} ,   & \sigma_2' =  \frac{-n(m-2u+1) +m(n-v)}{m+n-2u-v+1}  ,\\[4mm]
\sigma_3 = \frac{-n(m+1) + m(n+1-u-v)}{m+n - u -v }  , & \sigma_3' =  \sigma_1. \\
\end{array}$$

For an index $k \in \mathbb{N}$, consider $\lambda_k\in [\sigma_c, \min(0,\sigma_b) ]$ and $\mu_k := \frac{2u+v-1}{m+n-2u-v+1 } \lambda_k \ge \lambda_k$. { In the construction, $\lambda_k$ and $\mu_k$ describe the configuration at $q_k$.}  Choose freely $q_k \gg 1$, $p_k\ge q_k +\log(q_k)$ and $r_k\ge p_k$ not too large.\\

Given these parameters, we construct an elementary template $\bQ_v(q_k,p_k,r_k)$ on the interval $[q_k, r_k]$. Given the initial condition at $q_k$, it is enough to describe the behavior of the slopes on all subintervals. The constructed template is depicted in Figure \ref{FigUnif1}.\\

We first construct a base of our template as follows, before some adjustments.\\
At $q_k$, the initial condition is set to be $Q_1(q_k)= \cdots = Q_{2u+v-1}(q_k) = \lambda_k q_k$ and $Q_{2u+v}(q_k) = \cdots = Q_{m+n}(q_k) = \mu_k q_k$. By definition of $\mu_k$, the sum $\sum_{i=1}^{m+n} Q_i(q_k) =0$. We describe slopes on all subintervals:
\begin{itemize}
\item On $[q_k,p_k]$, $Q_1= \cdots = Q_{u+v}$ have slope $\sigma_c$ while $Q_{u+v+1}= \cdots = Q_{2u+v-1}$ have slope $m$ and $Q_{2u+v} = \cdots = Q_{m+n}$ have slope $\sigma_1 <m$ until they meet and $Q_{u+v+1}= \cdots = Q_{m+n}$ have slope $\sigma_2$. 
\item  On $[p_k, r_k]$, $Q_1= \cdots = Q_{u+v}$ have slope $\sigma_b$ while $Q_{u+v+1}= \cdots = Q_{m+n}$ have slope $\sigma_3$. 
\end{itemize}
Here, we assume implicitly that $r_k$ is small enough if $\sigma_b \ge \sigma_3$, so that $Q_1(r_k) \le Q_{m+n}(r_k)$ remains, it is not relevant to provide the explicit bound. We now modify our base template: consider the line of slope $-n$ passing through the point $(r_k, Q_1(r_k))$. It intersects $Q_{m+n}$ at some abscissa $r'_k$. On the interval $[r_k',r_k]$, $u-1$ components are represented by this line, while the other components $P_j$ with index $j \ge u+v$ have their slope $\sigma_i$ changed into $\sigma_i'$, for $1\le i \le 3$.\\

\begin{figure}[h!]
 \begin{center}
 \begin{tikzpicture}[scale=0.5]
 
 \draw[black, thick] (0,-3) -- (7,-8) node [midway,below] {$\sigma_c$};
  \draw[black,thick] (7,-8) -- (9,-7) node [midway,above] {$\sigma_b$};
  
  \draw[black,thick] (0,-3) -- (2,0);
   \draw[black,thick] (0,1) -- (2,0) node [midway,above] {$\sigma_1$};
    
   \draw[black,thick] (2,0) -- (5,1) node [midway,below] {$\sigma_2$};

   \draw[black,thick] (9,-7) -- (5,1);
   
   \draw[black,thick] (7,3) -- (5,1) node [midway,below] {$\sigma_2'$};
   
   \draw[black,thick] (7,3) -- (9,2) node [midway,below] {$\sigma_3'$};

   \draw[black, thin, dotted, ->] (-1,-2) -- (14,-2);
 
  \draw[black, dashed, thin] (0,3) -- (0,-9) node [below] {$q_k$};
   \draw[black, dashed, thin] (5,3) -- (5,-9) node [below] {$r'_k$};
   \draw[black, dashed, thin] (9,3) -- (9,-9) node [below] {$r_k$};
   \draw[black, dashed, thin] (7,3) -- (7,-9) node [below] {$p_k$};

 \end{tikzpicture}
 \end{center}
 \caption{Construction of $\bQ_{v}(q_k,p_k,r_k)$ when $\sigma_b\ge 0$. Slopes different from $-n$ and $m$ are labelled.}\label{FigUnif1}
 \end{figure}
 
 This finishes the construction of $\bQ_v(q_k,p_k,r_k)$ on the interval $[q_k, r_k]$.\\

Fix an admissible main parameter $v$. Setting $q_1\gg1$ and $\lambda_1\in [\sigma_c, \min(0,\sigma_b) ]$, the choice of a pair of admissible sequences $(p_k)_{k\in\bN},(r_k)_{k\in\bN}$ and fixing $q_{k+1}=r_k$ allows to construct inductively a template $\bQ_v((p_k)_{k\in\bN},(r_k)_{k\in\bN})$ by concatenation on the interval $[q_1,\infty)$. Note that $\lambda_{k+1}$ is determined by the choice of $r_k$. We denote by $\cQ_v$ the family of all templates $\bQ_v((p_k)_{k\in\bN},(r_k)_{k\in\bN})$ for admissible sequences $(p_k)_{k\in\bN},(r_k)_{k\in\bN}$. It satisfies an analogue to Lemma \ref{ConstrLemma} and Lemma \ref{ConstrLemma2}.\\

\begin{lemma}\label{ConstrLemma3}
Fix  an integer $v$ admissible, for a template $\bQ \in\cQ_{v}$ constructed above,
\begin{itemize}
\item  When $\bQ$ ranges through $\cQ_v$, then $\limsup_{q\to \infty} \frac{Q_1(q)}{q}$ ranges through the full interval $[\sigma_c, \min(0,\sigma_b)]$,
\item $\bQ$ satisfies (b)
\item $\bQ$ satisfies (c) $\Leftrightarrow \sigma_c > -n \Leftrightarrow v\ge1$,
\item $\bQ$ does not satisfy (a),
\end{itemize}
\end{lemma}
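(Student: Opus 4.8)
The plan is to imitate the proofs of Lemma \ref{ConstrLemma} and Lemma \ref{ConstrLemma2}, reading everything off the explicit shape of the elementary template $\bQ_v(q_k,p_k,p_k')$ in Figure \ref{FigUnif1}; the decisive feature is that its first component $Q_1$ uses only the two slopes $\sigma_c$ and $\sigma_b$, the auxiliary slopes $\sigma_1,\sigma_2,\sigma_3,\sigma_1',\sigma_2',\sigma_3'$ affecting only components $Q_j$ with $j>u+v$. First I would record the elementary inequalities $\sigma_b-\sigma_c=\frac{(m+n)(u-1)}{u+v}>0$ (here $u\ge2$) and $\sigma_b<m$, together with $\sigma_c\le\min(0,\sigma_b)$, which is forced by admissibility of $\lambda_k$; consequently $\sigma_c<\sigma_b<m$ and $\sigma_c\le0$, the value $-n$ is a slope of $Q_1$ only if $\sigma_c=-n$, and clearing denominators in $\sigma_c=\frac{-nu+mv}{u+v}$ gives $\sigma_c>-n\Leftrightarrow(m+n)v>0\Leftrightarrow v\ge1$.

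For the first assertion I would observe that on each linear piece the map $q\mapsto Q_1(q)/q$ is monotone, so its maximum over the cell $[q_k,q_{k+1}]=[q_k,p_k']$ is attained at a breakpoint of $Q_1$, i.e. at one of $q_k,p_k,q_{k+1}$. Since $\lambda_k\ge\sigma_c$ a one-line computation yields $Q_1(p_k)/p_k=\sigma_c+(\lambda_k-\sigma_c)q_k/p_k\le\lambda_k$, while by construction $Q_1(q_k)/q_k=\lambda_k$ and $Q_1(q_{k+1})/q_{k+1}=\lambda_{k+1}$; hence $\limsup_{q\to\infty}Q_1(q)/q=\limsup_k\lambda_k$, which always lies in $[\sigma_c,\min(0,\sigma_b)]$ because each $\lambda_k$ does. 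For the converse, given a target $\tau$ in that interval I would steer $\lambda_k\to\tau$ exactly as in the proof of Lemma \ref{ConstrLemma}: if $\sigma_b>0$, the positive slope of $Q_1$ on $[p_k,q_{k+1}]$ lets one prescribe $\lambda_{k+1}$ anywhere in $[\sigma_c,0]$ (take $p_k$ large so the dip $Q_1(p_k)/p_k$ is close to $\sigma_c$, then pick $q_{k+1}=p_k'$ with $Q_1(q_{k+1})=\tau q_{k+1}$); if $\sigma_b\le0$, then $Q_1$ is non-increasing on the cell with extreme slopes $\sigma_c<\sigma_b\le0$, and the same choice of $p_k,p_k'$ lets one prescribe $\lambda_{k+1}$ anywhere in $[\sigma_c,\sigma_b]$. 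Either way one can take $\lambda_k\equiv\tau$.

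The three assertions on (a), (b), (c) depend only on $Q_1$ (Proposition \ref{prop}). Since $\sigma_b<m$, $Q_1$ never has the maximal slope $m$, so (b) holds vacuously. Since $Q_1$ has slope $\sigma_c$ on $[q_k,p_k]$ and $|p_k-q_k|\ge\log q_k\to\infty$ (because $q_1\gg1$ and $q_{k+1}\ge q_k+\log q_k$ force $q_k\to\infty$), the template has the minimal slope $-n$ on arbitrarily long intervals precisely when $\sigma_c=-n$, i.e. $v=0$; hence (c) holds $\Leftrightarrow\sigma_c>-n\Leftrightarrow v\ge1$. Finally (a) fails: using $\lambda_k\le0$ and $\sigma_c<0$ one gets $Q_1(p_k)=\lambda_k q_k+\sigma_c(p_k-q_k)\le\sigma_c\log q_k\to-\infty$, so $Q_1$ is unbounded below and does not stay at bounded distance from $0$. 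The statement is then transferred to matrices $\bxi$ through Proposition \ref{prop} and Theorem \ref{VarPrinc}, exactly as for Lemma \ref{ConstrLemma2}.

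The main obstacle is the surjectivity half of the first assertion: one has to check that, within the admissibility constraints of Subsection \ref{qq5} ($p_k'\ge p_k$, the implicit upper bound on $p_k'$ ensuring $Q_1(p_k')\le Q_{m+n}(p_k')$, and $q_{k+1}=p_k'$), the induced transition $\lambda_k\mapsto\lambda_{k+1}$ really does sweep out all of $[\sigma_c,\min(0,\sigma_b)]$ in the limit and never exits that interval, so that the inductive concatenation is well defined. This is the same bookkeeping as in Lemma \ref{ConstrLemma}, merely carried out on the slightly more intricate cell of Figure \ref{FigUnif1}; once it is done, the remaining assertions follow at once from the fact that $Q_1$ has only the two slopes $\sigma_c$ and $\sigma_b$.
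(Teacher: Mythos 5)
Your proof is correct and follows essentially the same route as the paper's (which is quite terse): read off that $Q_1$ uses only the two slopes $\sigma_c$ and $\sigma_b$, record the explicit inequalities $\sigma_c<\sigma_b<m$ and $\sigma_c>-n\Leftrightarrow v\ge1$, and steer $\lambda_k$ by choosing $p_k$ and $p_k'$. Your monotonicity observation that $q\mapsto Q_1(q)/q$ is monotone on each linear piece, together with the one-line bound $Q_1(p_k)/p_k\le\lambda_k$, makes the identification $\limsup_q Q_1(q)/q=\limsup_k\lambda_k$ rigorous where the paper merely asserts it. Two small cautions: "one can take $\lambda_k\equiv\tau$" is slightly too strong at the endpoints $\tau\in\{\sigma_c,\sigma_b\}$ (only a limiting choice works there, as your steering argument already shows), and your (a) argument uses $\sigma_c<0$ while your preliminary step only records $\sigma_c\le0$; strict negativity does hold for the range $0\le v\le v_n$ actually used in Theorem \ref{Main}, so this is an implicit admissibility constraint shared with the lemma statement itself rather than a flaw in your argument.
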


\begin{proof}
Note that $Q_1$ has slope either $\sigma_c$ or $\sigma_b$, hence $\frac{Q_1(q)}{q} \in [\sigma_c, \min(0,\sigma_b)]$. If we chose $p_k \gg q_k= r_{k-1}$ and $r_k = p_k +O(q_k)$, then  $\frac{Q_1(q_k)}{q_k} = \sigma_c + o(1)$. On the other hand, if we chose $p_k=O (q_k)$ and $r_k \gg p_k$, then  $\frac{P_1(q_k)}{q_k} = \min(0,\sigma_b + o(1))$. Since our parameters are allowed to vary continuously, $\limsup_{q\to \infty} \frac{Q_1(q)}{q}$ ranges through the whole $[\sigma_c, \min(0,\sigma_b)]$.
The other properties follow directly from the minoration of $p_k$ and the explicit values of $\sigma_b$ and $\sigma_c$.
\end{proof}

We now chose parameter $v$ to prove the first statement of Theorem \ref{Main}.\\
 
If $u=n+1 \le \lfloor\frac{m+1}{2}\rfloor$, choose $v=0$. In this case, 

$$\sigma_b = \frac{-n+nm}{n+1} = \frac{n(m-1)}{n+1} >0$$

and $\sigma_c = -n$ and $\sigma_b <m$.\\

Hence by Lemma \ref{ConstrLemma3}, the family $\cQ_{0}$ has elements with $\limsup_{q\to \infty} \frac{Q_1(q)}{q}$ ranging through $[-n, 0]$, and Proposition \ref{prop} provides points $\xi \in \bbR^{n \times m}$ satisfying (B) but not (A) with $\hat{\omega}(\xi)$ ranging through the whole interval $[\frac{m}{n}, \infty]$.\\

If $u= \lfloor\frac{m+1}{2}\rfloor < n+1$, consider $v_n$ minimal so that $\sigma_b\ge0$. For any $0\le v \le v_n$, by Lemma \ref{ConstrLemma3} the families $\bQ_{v}$ have 
\begin{equation}\label{arti}
\limsup_{q\to \infty} \frac{Q_1(q)}{q}\textrm{ ranging through }\left[ \frac{-n\lfloor\frac{m+1}{2}\rfloor  +vm }{\lfloor\frac{m+1}{2}\rfloor +v}, \frac{-n + (v + \lfloor\frac{m-1}{2}\rfloor) m }{v+\lfloor\frac{m+1}{2}\rfloor }\right].\end{equation}
 Hence, we construct a family of template satisfying (b) but not (a) with $\limsup_{q\to \infty} \frac{Q_1(q)}{q}$ ranging through

$$ \bigcup_{v=0}^{v_n-1}  \left[ \frac{-n\lfloor\frac{m+1}{2}\rfloor  +vm }{\lfloor\frac{m+1}{2}\rfloor +v}, \frac{-n + (v + \lfloor\frac{m-1}{2}\rfloor) m }{v+\lfloor\frac{m+1}{2}\rfloor }\right] \cup \left[ \frac{-n\lfloor\frac{m+1}{2}\rfloor  +v_nm }{\lfloor\frac{m+1}{2}\rfloor +v_n}, 0 \right] = [-n,0].$$

 Proposition \ref{prop} and \eqref{LimPOmega} provides points $\xi \in \bbR^{n \times m}$ satisfying (B) but not (A) with $\hat{\omega}(\xi)$ ranging through the whole interval $[\frac{m}{n}, \infty]$.\qed\\[3mm]
One can check that the lower contraction rates of the constructed templates are strictly positive, providing positive Hausdorff and Packing dimension.

\begin{remark}\label{rmk1}
Note that when $v\ge1$, property (c) is also satisfied by elements of $\cQ_v$. Hence this construction provides matrices $\bxi \in \bbR^{n \times m}$ satisfying (B) and (C) but not (A) with $\hat{\omega}(\bxi)$ ranging through the whole interval $[\frac{m}{n}, \lfloor \frac{m+1}{2}\rfloor]$.
\end{remark}

\subsection{Construction related to simultaneous properties (B) and (C)}\label{qq4}
 We extend Summer's construction \cite{L} dealing with Property (C) to the case $m>1$. Perhaps surprisingly, this construction also has property (B) if $m>1$.\\

Consider the main parameter $1\le u \le n-1$, and define the two slopes 
\begin{eqnarray}
\sigma_c &:= &\frac{m u - mn}{m+u} > -n, \\
\sigma_b &:= &\frac{m u - (m-1) n}{m+u-1} > -n .
 \end{eqnarray}
 Note that $-n < \sigma_c < \sigma_b <m$ if $m>1$ and $-n < \sigma_c < \sigma_b = m$ if $m=1$.\\
 
For $q_k \gg 1$, consider $p_k \ge q_k + \log q_k$ and a parameter $\lambda_k \in [\sigma_c ,\min(0,\sigma_b)]$ { describing the configuration at $q_k$.}\\

The construction of the elementary temple $R_u(q_k, p_k)$ on the interval $[q_k,p_k]$ is as follows. It is depicted on Figure \ref{FigUnif22}. At $q_k$, we set 
\[R_1(q_k) = \cdots = R_{m+u}(q_k) = \lambda_k q_k \le R_{m+1+u}(q_k) \le \cdots \le R_{m+n}(q_k)\]
where we have enough freedom to ensure that the sum of the components is nul. Then on the interval $[q_k,p_k]$, the $m+u$ first components are equal and share slope $\sigma_c$ while all other have slope $m$. Consider the line $\mathcal{L}$ of slope $-n$ passing through $(p_k, R_{n+m}(p_k))$. Set $q_{k+1}$ to be the abscissa of the intersection of the line $\mathcal{L}$ and the line of slope $\sigma_b$ passing through $(p_k, R_{1}(p_k))$. Since $\sigma_b > -n$, we have $p_k < q_{k+1} < \infty$. On the interval $[p_k,q_{k+1}]$, the $m+u-1$ first components are equal and share slope $\sigma_b$, and among the components $R_{u+m}, \ldots , R_{n+m}$ one follows $\mathcal{L}$ while the others have slope $m$. By definition of $q_{k+1}$, we get
\[\lambda_{k+1} q_{k+1} := R_1(q_{k+1}) = \cdots = R_{m+u}(q_{k+1})  \le R_{m+1+u}(q_{k+1}) \le \cdots \le R_{m+n}(q_{k+1}).\]

Fix $1\le u \le n-1$, choose $q_1\gg1$ and $\lambda_k \in [\sigma_c ,\min(0,\sigma_b)]$, we construct inductively a template $\bR_u((p_k)_{k\in\bN} )$ on the interval $[q_1,\infty)$ by concatenation of elementary templates. Note that $\lambda_{k+1}$ is determined by the choice of $p_k$. We denote $\cR_u$ the set of templates constructed $\bR_u((p_k)_{k\in\bN} )$ for all admissible sequences $(p_k)_{k\in\bN}$. It has the following properties.

\begin{figure}[h!]
 \begin{center}
 \begin{tikzpicture}[scale=0.5]
 
 \draw[black, thick] (0,0) -- (3,-4) node [midway,below] {$\sigma_c$};
 \draw[black, thick] (3,-4) -- (6,-6) node [midway,below] {$\sigma_b$};
 
 \draw[black, thin] (0,1) -- (6,7) node [midway,below] {};
 \draw[black, thin] (0,2) -- (6,8) node [midway,below] {};
 \draw[black, thin] (0,3) -- (3,6) node [midway,below] {};
 \draw[black, thin] (3,6) -- (6,-6) node [midway,above] {};
 \draw[black, thin] (3,-4) -- (6,-1) node [midway,below] {};
 
 \draw(4.2,3) node {$\cL$};
 
  \draw[black, thin, ->] (-1,1.3) -- (14,1.3);
 
   \draw[black, dashed, thin] (0,3) -- (0,-7) node [below] {$q_k$};
   \draw[black, dashed, thin] (3,6) -- (3,-7) node [below] {$p_k$};
  \draw[black, dashed, thin] (6,8) -- (6,-7) node [below] {$q_{k+1}$};

 \end{tikzpicture}
 \end{center}
 \caption{Construction of $\bR(q_k,p_k)$. Slopes different from $-n$ and $m$ are labelled. Thick lines represent more than $1$ component.}\label{FigUnif22}
 \end{figure}
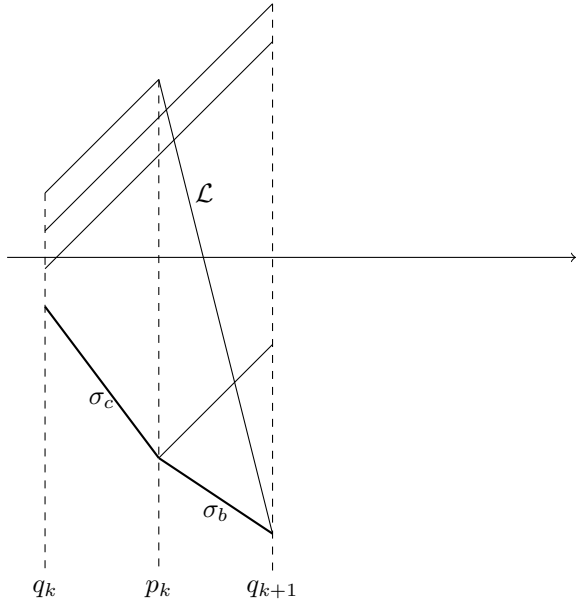

\begin{lemma}\label{rmk3}
Fix $1\le u \le n-1$, for a template $\bR$ in the family $\cR_u$, 
\begin{itemize}
\item $\bR$ does not satisfy (a)
\item $\bR$ satisfies (c)
\item $\bR$ satisfies (b) if $m>1$
\item When $\bR$ ranges through $\cR_u$, then $\limsup_{q\to \infty} \frac{R_{1}(q)}{q}$ ranges through $[ \gamma_u ,\min(0,\sigma_b)]$, where $\gamma_u = \frac{\sigma_c+\sigma_b\left(\frac{m-\sigma_c}{n+\sigma_b}\right)}{1+\frac{m-\sigma_c}{n+\sigma_b}}$.
\end{itemize}
\end{lemma}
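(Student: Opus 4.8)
The plan is to dispatch the first three bullets by direct inspection of the first component $R_1$, and to reduce the fourth to a one-variable analysis of the recursion $\lambda_k\mapsto\lambda_{k+1}$ induced by the construction.

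By construction $R_1$ has slope $\sigma_c$ on every $[q_k,p_k]$ and slope $\sigma_b$ on every $[p_k,q_{k+1}]$, with $R_1(q_k)=\lambda_k q_k$, $\lambda_k\in[\sigma_c,\min(0,\sigma_b)]$, and $q_k\uparrow\infty$ because $q_{k+1}>p_k\ge q_k+\log q_k$. For (a): since $\sigma_c<0$ and $\lambda_k\le 0$, $R_1(p_k)=\lambda_k q_k+\sigma_c(p_k-q_k)\le\sigma_c\log q_k\to-\infty$, so $R_1$ is not at bounded distance from $0$ and $\bR$ fails (a). For (b) and (c): $R_1$ takes only the two slopes $\sigma_c,\sigma_b$, and $-n<\sigma_c<\sigma_b\le m$ with $\sigma_b<m$ exactly when $m>1$; hence $R_1$ never takes the extremal slope $-n$, nor the slope $m$ when $m>1$, so the intervals on which $R_1$ has slope $-n$ (resp.\ $m$) are empty. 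As $R_1$ is moreover linear on pieces of length bounded below by a fixed $\kappa>0$ (the $[q_k,p_k]$ have length $\ge\log q_1$, and the $[p_k,q_{k+1}]$ are bounded below by the construction), Proposition~\ref{prop} applies and passes (c), and (b) when $m>1$, down to the matrices of $\cM(\bR)$.

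For the last item I would first localise $\limsup_{q\to\infty}R_1(q)/q$. On $[q_k,p_k]$ one has $R_1(q)/q=\sigma_c+(\lambda_k-\sigma_c)q_k/q$, non-increasing in $q$ and running from $\lambda_k$ down to $R_1(p_k)/p_k$; on $[p_k,q_{k+1}]$ one has $R_1(q)/q=\sigma_b+(R_1(p_k)-\sigma_b p_k)/q$, and since $R_1(p_k)-\sigma_b p_k=(\lambda_k-\sigma_c)q_k-(\sigma_b-\sigma_c)p_k<0$ (using $\sigma_c\le\lambda_k\le\sigma_b$ and $q_k<p_k$) this is non-decreasing in $q$, running from $R_1(p_k)/p_k$ up to $\lambda_{k+1}$. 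Thus $R_1(q)/q$ is unimodal on each block with maximum $\max(\lambda_k,\lambda_{k+1})$, so $\limsup_{q\to\infty}R_1(q)/q=\limsup_{k\to\infty}\lambda_k$, and it remains to determine which values of $\limsup_k\lambda_k$ occur as $(p_k)_k$ ranges over admissible sequences. The core is the one-step map $(\lambda_k,p_k)\mapsto\lambda_{k+1}$: $q_{k+1}$ is the abscissa where the slope-$(-n)$ line $\mathcal{L}$ through $(p_k,R_{m+n}(p_k))$ meets the slope-$\sigma_b$ line through $(p_k,R_1(p_k))$, so $q_{k+1}-p_k=(R_{m+n}(p_k)-R_1(p_k))/(n+\sigma_b)$; combining this with $R_{m+n}(p_k)-R_1(p_k)=\Delta_k+(m-\sigma_c)(p_k-q_k)$ (where $\Delta_k=R_{m+n}(q_k)-\lambda_k q_k$), $R_1(p_k)=\lambda_k q_k+\sigma_c(p_k-q_k)$ and $R_1(q_{k+1})=R_1(p_k)+\sigma_b(q_{k+1}-p_k)$ expresses $\lambda_{k+1}=N(s)/D(s)$ with $s=p_k-q_k\ge\log q_k$, $N,D$ affine in $s$, $D>0$ and $N'(s)/D'(s)=\frac{n\sigma_c+m\sigma_b}{m+n+\sigma_b-\sigma_c}=\gamma_u$. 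Hence $\lambda_{k+1}\to\gamma_u$ as $s\to\infty$, while $\frac{d}{ds}(N/D)$ has the sign of $\gamma_u-\lambda_{k+1}$, so $s\mapsto\lambda_{k+1}(s)$ decreases monotonically from its maximal value at $s=\log q_k$ down to $\gamma_u$; in particular $\lambda_{k+1}>\gamma_u$ for every admissible finite choice. Using the balanced initial profile of the trailing components at $q_k$ fixed by the construction one gets $\Delta_k=c_u(-\lambda_k)q_k+O(1)$ with $c_u:=\frac{m+n}{n-u}$, so the value at $s=\log q_k$ equals $\Lambda(\lambda_k)+O(\log q_k/q_k)$, where $\Lambda(\lambda)=\lambda\cdot\frac{n+\sigma_b-c_u\sigma_b}{n+\sigma_b-c_u\lambda}$ is a non-decreasing Möbius map with fixed points $0$ and $\sigma_b$, satisfying $\Lambda(\lambda)>\lambda$ for $\lambda\in[\sigma_c,\min(0,\sigma_b))$ and $\Lambda(\sigma_c)=\gamma_u$. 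Thus, for a given $\lambda_k$, the attainable $\lambda_{k+1}$ fill the interval $[\gamma_u,\Lambda(\lambda_k)]\subseteq[\gamma_u,\min(0,\sigma_b)]$ (exactly in the interior, since $p_k$ varies continuously).

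The three surjectivity claims then follow. Since $\lambda_k\le\min(0,\sigma_b)$ always and $\lambda_{k+1}>\gamma_u$ always, $\limsup_k\lambda_k\in[\gamma_u,\min(0,\sigma_b)]$. Taking $p_k-q_k\to\infty$ at every step forces $\lambda_k\to\gamma_u$, so $\gamma_u$ is attained. For $\tau\in(\gamma_u,\min(0,\sigma_b))$ one starts with $\lambda_1$ near $\tau$ and, for each large $k$, picks $p_k$ by the intermediate value theorem so that $\lambda_{k+1}=\tau$ (possible because $\tau\in(\gamma_u,\Lambda(\lambda_k))$ once $\lambda_k$ is close enough to $\tau$, as $\Lambda(\tau)>\tau$), giving $\limsup_k\lambda_k=\tau$. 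The endpoint $\min(0,\sigma_b)$ is reached by letting $\lambda_k$ increase to it along a sufficiently slowly varying sequence, which is admissible because $\Lambda$ exceeds the identity below that fixed point. The step I expect to be the main obstacle is the penultimate paragraph: identifying the recursion as a Möbius map, checking that $N'/D'$ is exactly $\gamma_u$ and that $\lambda_{k+1}(s)$ is monotone in $s$, establishing the two needed facts about $\Lambda$ ($\Lambda(\sigma_c)=\gamma_u$ and $\Lambda>\mathrm{id}$ up to $\min(0,\sigma_b)$), and controlling the $o(1)$ errors produced by the logarithmic minimal dips; the first three bullets and the unimodality of $R_1(q)/q$ on a block are routine.
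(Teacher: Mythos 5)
Your proposal is correct and follows essentially the same route as the paper: both reduce the fourth bullet to a study of the one-step recursion $(\lambda_k,p_k)\mapsto\lambda_{k+1}$ and read off the two endpoints from the extreme regimes $q_k=o(p_k)$ (giving $\gamma_u$) and $p_k=q_k+o(q_k)$ (giving $\min(0,\sigma_b)$). You supply the M\"obius-map/fixed-point bookkeeping and the reduction $\limsup_q R_1(q)/q=\limsup_k\lambda_k$ that the paper states only implicitly; your verification that the asymptotic slope $N'/D'$ equals $\gamma_u$ is exactly the paper's two-line computation in disguise.

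One point deserves correction. You assert ``$\Delta_k = c_u(-\lambda_k)q_k + O(1)$ with $c_u=\frac{m+n}{n-u}$'' as coming from a ``balanced initial profile of the trailing components fixed by the construction''. The construction does \emph{not} fix the trailing components to be equal: it only constrains them at $q_1$ by the zero-sum condition and then lets them evolve, and after a single step the detached former $R_{m+u}$-component sits strictly below the other trailing components, so balance is not preserved. Consequently the map $\Lambda$ you wrote down is the one-step dynamics only for a particular choice of initial profile and only for $k=1$; for $k\ge 2$ the state of the system is $(\lambda_k,\Delta_k)$ and not $\lambda_k$ alone. Fortunately this does not break the proof: what holds in general is the inequality $\Delta_k\ge c_u|\lambda_k|q_k$ (the balanced case minimises $R_{m+n}(q_k)$ under the zero-sum constraint), and this is all one needs --- the lower endpoint $\gamma_u$ is an $s\to\infty$ limit independent of $\Delta_k$, and the upper-endpoint argument only requires $\Delta_k\gg|\lambda_k|q_k$ to make $q_{k+1}/q_k$ bounded away from $1$ whenever $\lambda_k$ is bounded away from the fixed point. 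So the argument survives, but the exact M\"obius formula should be presented as a computation for a convenient choice of profile rather than as the construction's dynamics.
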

The properties regarding (a), (b) and (c) are given by the lower bound of $p_k$ and $-n <\sigma_c < \sigma_b<m$. For the $\limsup$, the upper bound is reached when $p_k = q_k + o(q_k)$ ensuring $\lambda_k = \min(0,\sigma_b) + o(1)$ and the lower bound is reached when $q_k = o(p_k)$: in this case at $p_k$ we have 
$$R_1(p_k)= \sigma_c p_k +o(p_k) \, \textrm{ and } R_{m+n}(p_k)= mp_k +o(p_k)$$
and it follows that $q_{k+1} = \left(1+\frac{m-\sigma_c}{n+\sigma_b}\right)p_k +o(p_k)$ and $R_1(q_{k+1}) = \gamma_u q_{k+1} + o(q_{k+1})$.\\

Observe that $\gamma_1= \frac{m - n \frac{m^2}{m+1}}{1+ \frac{m^2}{m+1}}$ and that $\sigma_b \ge 0$ when $u \ge \frac{n}{2}$. Also, note that the freedom in the construction allows to transform continuously a template from the family $\cR_u$ into one of the family $\bR_{u+1}$ : move one of the free line of slope $m$ down and merge it with the lines of slopes $\sigma_b$ and $\sigma_c$. This preserves properties (b) and (c). Hence, we obtain a family of templates with properties (b) and (c) but not (a) with 
\begin{equation}\label{range}
\limsup_{q\to \infty} \frac{R_{1}(q)}{q}
\,\,\,
\text{ ranging through} 
\,\,\, [ \gamma_1 ,0].
\end{equation}
Proposition \ref{prop} provides points $\xi \in \bbR^{n \times m}$ satisfying (B) and (C) but not (A). Now condition \eqref{range} together with the first formula from \eqref{LimPOmega} show that $\hat{\omega}(\xi)$ can attain any value from the whole interval $[\frac{m}{n},\frac{m^2}{m+1}]$.\\
 \qed
 
 In particular, it covers $m=2$ and $\bxi$ with property (B). 
 
 Note that the constructed templates have strictly positive lower contraction rate, providing strictly positive Hausdorff and Packing dimensions - and thus existence.
 


\appendix

\section{About  monotonicity lemma \ref{monotonicity}}

In this appendix, we adapt \cite[Lemma 3]{Szeged} to the setting of sup-norms to obtain Lemma \ref{monotonicity}.\\
Recall that $m=2$ and $n\ge 2 $ is arbitrary. We set $d = 2+n$. We consider a matrix $\pmb{\xi} \in \bbR^{{n\times 2}}$. According to Theorem \ref{normindep}, we can prove the statement for the Euclidian norm $\|.\|_2$.\\
 
Consider the sequence of best approximation related to the Euclidian norm 
 $\bx_i = (\pmb{x}_i,\pmb{y}_i)$ ---  $i$-th best approximation vector in Euclidean norm,
  $\pmb{x}_i = (x_{1,i}, x_{2,i}) \in \mathbb{Z}^2, \pmb{y}_i = (y_{1,i},...,y_{n,i}) \in \mathbb{Z}^n$, 
    $L_i$ - the value of $i$-th best approximation 
    (we assume that both norms, to estimate the value of $\pmb{x}$  in $\mathbb{R}^2$ and $\pmb{\xi}\pmb{x} - \pmb{y}$  in $\mathbb{R}^n$, are  Euclidean. So
    $$
    L_i = {\|}\pmb{\xi}\pmb{x}_i - \pmb{y}_i{\|}_2.
    $$
    We deal with the two-dimensional subspace
    $$\frak{L} =\{ \,\,\bx\in \mathbb{R}^d: \pmb{y} =\pmb{\xi}\pmb{x}\,\,\}.
    $$
   We use notation
 $Z_i ={\|}\bx_i{\|}_2$ - Euclidean norm of $\bx_i\in \mathbb{Z}^d$,  $X_i= {\|}\pmb{x}_i{\|}_2 $ - Euclidean norm of $\pmb{x}_i \in \mathbb{Z}^2$. We cannot claim that $Z_i\le Z_{i+1}$ but
 it is clear that 
 \begin{equation}\label{zs1}
 Z_i \ll_{\pmb{\xi}}X_i,\,\,\,\, \forall  \, i \in \bN
 \end{equation}
 and
  \begin{equation}\label{zs10}
 L_i 
 \asymp_{\pmb{\xi}}  \zeta_i = \,\,\text{Euclidean distance from}\,\,\bx_i\,\,\text{to}\,\, \frak{L},
 \,\,\,\, \forall  \, i \in \bN
 \end{equation}
 In addition, we should note that 
   \begin{equation}\label{zs11}
 Z_i\ll_{\pmb{\xi}} X_i< X_{i+1}\le Z_{i+1}. 
 \end{equation}

Furthermore, here in Appendix  we everywhere assume that 
  \begin{equation}\label{zs2}
 \max \left( \frac{X_{i+1}}{X_i}, 
 { \frac{L_i}{L_{i+1}}
 \, 
 } \right)\le M,\,\,\, \forall \, i \in \bN.
 \end{equation}
By the exponentially growth of best approximations (See Bugeaud - Laurent \cite{MMJ}),
$$
\forall \, C>1\,\, \exists C_1(C) \,\,\,\text{such that}\,\,\, X_{i+C_1(C)} \ge C X_i,
$$
so by  \eqref{zs1} we conclude that 
  \begin{equation}\label{zs4}
\exists C_2 = C_2(M,\pmb{\xi}) \,\,\,\text{such that}\,\,\, X_{k} \ge Z_\nu,\,\,\ \forall \, k \ge \nu +  C_2.
\end{equation}

Now we give a precise formulation of Lemma \ref{monotonicity}.

\begin{lemma} \label{monotonicity1}
Suppose  
\eqref{zs2} is satisfied and 
that all the best approximations 
$$
\bx_\nu, \bx_{\nu+1},....,\bx_{k-1}, \bx_k
$$
belong to the same two-dimensional linear subspace $\pi$.
Then
 \begin{equation}\label{zs3}
 L_\nu X_{\nu+1}\ll_{M,\pmb{\xi}} L_{k-1}X_k.
 \end{equation}
\end{lemma}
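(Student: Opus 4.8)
The plan is to reduce the claim to the classical one-dimensional theory of best approximations of a line inside a rank-two lattice, and to extract from it the stronger statement that $L_iX_{i+1}\asymp\mathrm{covol}(\Lambda)$ for \emph{every} index $i$ in the run, where $\Lambda$ is an auxiliary planar lattice; \eqref{zs3} then follows at once, the covolume cancelling in the ratio. By Theorem~\ref{normindep} we may work throughout with the Euclidean norm, as already arranged. Put $\Lambda=\pi\cap\bZ^{d}$; since consecutive best approximations are linearly independent, $\Lambda$ has rank $2$. First I would check that $\ell:=\pi\cap\frak{L}$ is exactly one-dimensional: it cannot be $\{\bzero\}$, for then $\dist(\cdot,\frak{L})\gg\|\cdot\|_{2}$ uniformly on $\pi$, which by \eqref{zs10} forces $L_i\gg1$ and contradicts $L_i\to0$; and it cannot be all of $\pi$, since $\bxi$ is totally irrational. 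Fix an orthonormal frame $(\hat e,\hat w)$ of $\pi$ with $\hat e$ spanning $\ell$ and write $\bx=a(\bx)\hat e+b(\bx)\hat w$ for $\bx\in\pi$. Then $|b(\bx)|=\dist(\bx,\ell)\asymp\dist(\bx,\frak{L})\asymp_{\bxi}L_{\bxi}(\bx)$ by \eqref{zs10}, while $|a(\bx_i)|\le Z_i$ always and $|a(\bx_i)|\asymp_{\bxi}Z_i\asymp_{\bxi}X_i$ for large $i$, using \eqref{zs1} and \eqref{zs11}; the finitely many small indices contribute only bounded factors by \eqref{zs2} and \eqref{zs4}, and the degenerate case $k=\nu$ is immediate from \eqref{zs2}, so we assume $k\ge\nu+1$. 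Finally, replacing some $\bx_i$ by $-\bx_i$ (which changes neither $X_i$ nor $L_i$) we may assume $a(\bx_i)>0$ for all large $i$.

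Next I would bring in the sequence $(\bw_j)_j$ of (two-sided) best approximations of the line $\ell$ inside $\Lambda$. Classically, consecutive $\bw_j,\bw_{j+1}$ form a basis of $\Lambda$, lie on opposite sides of $\ell$, and satisfy $|a(\bw_{j+1})|>|a(\bw_j)|$, $|b(\bw_{j+1})|<|b(\bw_j)|$; hence the determinant carries no cancellation and
\[ |a(\bw_j)|\,|b(\bw_{j+1})|+|a(\bw_{j+1})|\,|b(\bw_j)|=\mathrm{covol}(\Lambda), \]
so $|a(\bw_{j+1})|\,|b(\bw_j)|\asymp\mathrm{covol}(\Lambda)$. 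The heart of the argument is the transfer of this to the $\bx_i$. Fix $i$ with $\nu\le i\le k-1$. On one side, the area of the parallelogram spanned by $\bx_i,\bx_{i+1}$ is a positive integer multiple of $\mathrm{covol}(\Lambda)$, hence $\ge\mathrm{covol}(\Lambda)$, and it is $\le|a(\bx_i)|\,|b(\bx_{i+1})|+|a(\bx_{i+1})|\,|b(\bx_i)|\ll_{\bxi}Z_{i+1}L_i\asymp_{\bxi}X_{i+1}L_i$; thus $X_{i+1}L_i\gg_{\bxi}\mathrm{covol}(\Lambda)$. On the other side, let $\bw$ be the $\ell$-best approximation of largest Euclidean norm $<Z_{i+1}$ and $\bw'$ its successor, so $\|\bw'\|_2\ge Z_{i+1}$; the key point is that $\dist(\bw,\ell)\gg_{M,\bxi}L_i$, for otherwise $\bw$ would be an integer vector with $\|\pmb{w}\|<Z_{i+1}\ll_{\bxi}X_{i+1}$ and $L_{\bxi}(\bw)$ much smaller than $L_i$, and by \eqref{zs2} such a vector can force a best approximation to appear only a bounded number of steps past $i$, whence $L_{\bxi}(\bw)\gg_{M,\bxi}L_i$, a contradiction. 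Using the opposite-sides identity for $\bw,\bw'$, $\mathrm{covol}(\Lambda)\ge|a(\bw')|\,|b(\bw)|\asymp_{\bxi}\|\bw'\|_2\,\dist(\bw,\ell)\gg_{M,\bxi}Z_{i+1}L_i\asymp_{\bxi}X_{i+1}L_i$. Hence $X_{i+1}L_i\asymp_{M,\bxi}\mathrm{covol}(\Lambda)$ for every $i\in[\nu,k-1]$; taking $i=\nu$ and $i=k-1$ and dividing, the covolume cancels and $L_\nu X_{\nu+1}\asymp_{M,\bxi}L_{k-1}X_k$, which is even stronger than \eqref{zs3}.

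The main obstacle is exactly this transfer step. The data attached to $\bxi$ live in the sup-norm, are measured by $L_{\bxi}$ and by the first $m$ coordinates, whereas the clean two-dimensional picture lives in $\pi$ with the Euclidean norm and all $d$ coordinates, and one must move between the two while keeping every implied constant depending only on $M$ and $\bxi$. In particular the comparison $\dist(\cdot,\ell)\asymp L_{\bxi}(\cdot)$ carries a constant reflecting the position of $\pi$ relative to $\frak{L}$, and it disappears from the conclusion only because --- like $\mathrm{covol}(\Lambda)$ itself --- it enters the final ratio the same number of times above and below the bar. Controlling the various index gaps (how many $\ell$-best approximations lie between $\bx_i$ and $\bx_{i+1}$, and how far past $i$ an unexpectedly good approximation can force a best approximation to occur) is precisely what the hypotheses \eqref{zs1}--\eqref{zs4} are arranged to make routine, which is why the full argument below is somewhat technical; the only genuinely classical fact invoked without proof is that consecutive best approximations of a line in a planar lattice form a basis of that lattice and lie on opposite sides of the line.
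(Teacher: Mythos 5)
Your approach is genuinely different from the paper's, and the overall strategy is sound, but the execution as written has a real imprecision that needs to be repaired before the cancellation you invoke at the end can be trusted.

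The paper proves this lemma by importing the argument of \cite{Szeged} nearly verbatim: it works with the Euclidean strip $\mathcal{G}_l$ in $\pi$, applies Minkowski's convex body theorem to the \emph{varying} lattices $\langle \bx_l, \bx_{l+1}\rangle_\bZ$, checks that inequalities (28)--(31) of \cite{Szeged} survive with $\bxi$-dependent constants, and uses the index shift $k_1=k-C_2$ together with \eqref{zs4} to guarantee the quantitative hypotheses. You instead freeze the planar lattice $\Lambda=\pi\cap\bZ^d$ and try to prove the sharper claim $X_{i+1}L_i\asymp_{M,\bxi}\mathrm{covol}(\Lambda)$ for all $i$ in the run, appealing to the classical theory of best approximations of the line $\ell=\pi\cap\frak L$ inside $\Lambda$. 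That is a more elementary and more self-contained route, and with care it does work; but two steps need strengthening.

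First, your comparison $|b(\bx)|=\dist(\bx,\ell)\asymp\dist(\bx,\frak L)\asymp_\bxi L_\bxi(\bx)$ is not a $\asymp_\bxi$. On $\pi$ one in fact has the \emph{exact} proportionality $L_\bxi(\bx)=\kappa\,|b(\bx)|$, where $\kappa=L_\bxi(\hat w)\asymp_\bxi\dist(\hat w,\frak L)$ is the ``angle'' of $\pi$ against $\frak L$; and $\kappa$ can tend to $0$ as the plane $\pi$ (which depends on the run) approaches $\frak L$. You do flag this at the end and assert the $\pi$-dependent constant cancels in the ratio. That assertion is correct, but only after you actually track the factor: what one gets is $X_{i+1}L_i\asymp_{M,\bxi}\kappa\cdot\mathrm{covol}(\Lambda)$, and $\kappa$ cancels because the \emph{same} $\kappa$ appears in both the upper and the lower estimate for a fixed run. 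Writing $\asymp_\bxi$ where a factor of $\kappa$ is hiding makes the argument look complete when the crucial uniformity is precisely what remains to be checked.

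Second, your upper estimate $\mathrm{covol}(\Lambda)\ge|a(\bw')|\,|b(\bw)|\asymp\|\bw'\|_2\,\dist(\bw,\ell)$ uses $|a(\bw')|\asymp\|\bw'\|_2$, i.e.\ $|b(\bw')|\ll\|\bw'\|_2$. This is exactly where thin $\kappa$ bites: $|b(\bw')|\le|b(\bx_{i+1})|=L_{i+1}/\kappa$, which is only guaranteed to be $\le Z_{i+1}$, not $\le Z_{i+1}/2$, so the needed comparison can fail without an extra argument (shifting the index by a bounded amount in the spirit of the paper's $k_1=k-C_2$, or using exponential growth of $X_j$). A cleaner way around the whole issue, staying inside your framework, is to drop the line-best-approximation machinery for the upper bound and apply Minkowski directly: show that the symmetric convex set $\{\bx\in\pi:\|\bx\|_2<Z_{i+1},\ L_\bxi(\bx)<c'L_i\}$ contains no nonzero lattice point for suitable $c'=c'(M,\bxi)$ (this is your ``key point'', rephrased), compute its area $\asymp Z_{i+1}L_i/\kappa$ once you know $c'L_i/\kappa<Z_{i+1}$ (which follows from $\kappa\ge L_{i+1}/Z_{i+1}$, itself a consequence of $|b(\bx_{i+1})|\le Z_{i+1}$ together with \eqref{zs2}), and conclude $\mathrm{covol}(\Lambda)\gg_{M,\bxi}Z_{i+1}L_i/\kappa$. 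Combined with your lower estimate $\mathrm{covol}(\Lambda)\le$ area spanned by $\bx_i,\bx_{i+1}\ll_\bxi Z_{i+1}L_i/\kappa$, this gives $X_{i+1}L_i\asymp_{M,\bxi}\kappa\,\mathrm{covol}(\Lambda)$ with all constants depending only on $M,\bxi$, and then the desired $L_\nu X_{\nu+1}\asymp_{M,\bxi}L_{k-1}X_k$ follows on dividing.

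In short: the reduction to a planar lattice and the cancellation-of-covolume idea is a valid and arguably more transparent alternative to the paper's proof, but you should make the dependence on $\kappa$ explicit throughout (rather than hiding it in $\asymp$) and replace the $|a(\bw')|\asymp\|\bw'\|_2$ step, which is not uniform, by a direct Minkowski argument.
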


A proof below is not self-contained. It relies on the argument of Lemmas 2 and 3  from \cite{Szeged}.
Local notation and numeration of formulas (28,29,30,31) below corresponds to \cite{Szeged}.

The only problem is that everywhere in our exposition we use $\bx_i$  for the vectors of best approximations, while for the spherical best approximation vectors in  \cite{Szeged} the notation  $\pmb{z}_j$ was used.
As we use here other notation from \cite{Szeged} and refer to several inequalities from  \cite{Szeged}, it seems convenient to use in the rest of the proof $\pmb{z}_j=\bx_j$.

\vskip+0.3cm
Proof.

\vskip+0.1cm

Case 1$^0$. If $k<\nu+ 2C_2$ then \eqref{zs3} follows from  \eqref{zs2} applied $2C_2$ times.

\vskip+0.1cm

Case 2$^0$. If $k\ge \nu+ 2C_2$,
first of all we define $ k_1 = k -C_2$.
Then \eqref{zs2}  ensures 
$$
L_{k_1-1}X_{k_1} \asymp_{M,{\pmb{\xi}}} L_{k-1}X_k.
$$
So it is enough to prove \eqref{zs3} for $k_1$ instead of $k$. 

As now $k_1 \ge \nu+C_2$, 
we can apply (\ref{zs4}) to see that 
$Z_{k_1} \ge X_{k_1}\ge Z_{\nu+1}$. Now Lemma 2 \cite{Szeged} ensures (31) in the same notation that in 
\cite{Szeged}. 
That is $\xi_i$ is the Euclidean distance from $\pmb{z}_i =\bx_i$ to the line $\ell$ which minimise the angle between vectors in $\frak{L}$
and $\pi$.
Now $\Xi_l$ from the proof from \cite{Szeged}
should be the minor axis of the ellipse
$$\mathcal{G}_l=
\pi \cap \{ \bx:  \|\pmb{\xi}\pmb{x} - \pmb{y}\|_2\le L_{l+1}\}.
$$
To deduce \eqref{zs3} for the best approximation in the sense of the present paper (but not for the spherical best approximation from \cite{Szeged}) we should establish the following inequalities  from \cite{Szeged}:

\vskip+0.1cm
$\bullet$ (28) for $ \nu \le l \le k_1$;

$\bullet$ (29)  with $ k_1$ instead of $k$;

$\bullet$ 
(30)  for $ \nu \le l \le k_1$, 

\vskip+0.1cm

with constants which may differ from those in \cite{Szeged}.
 
First of all, observe that 
  (28) is obvious. 
  
  As for (29), it remains valid, probably with some positive constants depending on $\pmb{\xi}$ instead of absolute constants. The explanation is as follows.   Consider arbitrary $l$ from the range
  $\nu\le l\le k$. The two-dimensional convex body
$$
\mathcal{E}_l =\{{\bx}\in 
\mathcal{G}_l 
:\,
\|{\bx}\|_2\le Z_{l+1}\} \supset \pmb{z}_l, \pmb{z}_{l+1}
$$
does not contain non-trivial integer points  inside.
It is clear that 
$$
{\rm vol}_2\, \mathcal{E}_l \le 4\Xi_lZ_{l+1}
$$
(recall that $\Xi_l$ is the half of minor axis of ellipse $\mathcal{G}_l$).
As for the lower bound,
we know that $ \pmb{z}_l \in \mathcal{G}_l$, so
\begin{equation}\label{zs33}
\Xi_l\le Z_{l}
\ll_{\pmb{\xi}} Z_{l+1}
\end{equation}
  by (\ref{zs11}). If  $
\Xi_l\le Z_{l+1}
$ then
$$
 {\rm vol}_2\, \mathcal{E}_l\ge \frac{\Xi_lZ_{l+1}}{2}.
$$
If
$
\Xi_l> Z_{l+1}$  then $\mathcal{E}_l$ is a disk of radius $Z_{l+1}$. Then  by (\ref{zs33}) we get
$$  {\rm vol}_2\, \mathcal{E}_l = \pi Z_{l+1}^2\gg_{\pmb{\xi}} {\Xi_lZ_{l+1}}.
$$
So in any case 
$$
 {\rm vol}_2\, \mathcal{E}_l\asymp_{\pmb{\xi}} \Xi_lZ_{l+1}.
$$
Now by Minkowski convex body theorem 
 for the lattice $ \Lambda =\langle \pmb{z}_l, \pmb{z}_{l+1}\rangle_\mathbb{Z}\subset \pi$ we have 
$${
\rm det}\, \Lambda \asymp_{\pmb{\xi}} {\rm vol}_2\, \mathcal{E}_l  \,\,\,\, \nu\le l \le k-1.
$$
This ensures inequality (29) from \cite{Szeged}.

To get (30) we need to generalize geometric argument from \cite{Szeged}.

Here it will be important that $l\le k_1$.
For such $l$ we consider two positive and small constants $c_1,c_2$.
Let $H_l$ be the half of the length of the major axis of ellipse $\mathcal{G}_l$
consider domains
$$
\mathcal{A}_l = \{\bx \in\mathcal{G}_l: \|\bx\|_2
\le c_1H_l\},\,\,\,\,
\mathcal{B}_l = \{\bx\in\mathcal{G}_l: 
\|\bx\|_2
\ge (1-c_2)H_l\}.
$$
If $c_1 $   and $c_2$ are small enough we have the following {\it property}.
If $\bx' $ and $\bx''$ belong to the same connectedness component of $\mathcal{B}_l$ then $ \bx'-\bx''$ belongs to $\mathcal{A}$.
   
Now we claim that for $ l\le k_1$ we have 
\begin{equation}\label{zs000}
\pmb{z}_l \not\in \mathcal{B}_l.
\end{equation}   
Indeed,
if $\pmb{z}_l \in \mathcal{B}_l$,
then by (\ref{zs4}) we have
$Z_k \ge Z_l \ge (1-c_2) H_l$ 
and so $\pmb{z}_k \in \mathcal{B}_l$.
By the {\it property } described above, the integer point 
$\pmb{z}_k -\pmb{z}_l$ belongs to $\mathcal{A}_l$.
But this is not possible for small $c_1$ because 
 $ \mathcal{A}_l $ lies strictly inside the set $ \mathcal{E}_l$.

Now we see that (\ref{zs000}) is valid and this immediately gives 
(30) from \cite{Szeged} with $ a = 1-\sqrt{1-c_2^2}$. $\Box$


\begin{thebibliography}{50}

\bibitem{AM} R. Akhunzhanov and N. Moshchevitin: \textit{On badly approximable vectors}, Math. Z., 301 (2022), 1573--1602. 
\bibitem{Yorkies} V. Beresnevich, L. Guan, A. Marnat, F. Ramirez, S. Velani: \textit{Dirichlet is not just Bad and Singular}, Advances in Mathematics, 401 (2022), 108316. 
\bibitem{MMJ} Y. Bugeaud and M. Laurent: \textit{On exponents of homogeneous and inhomogeneous Diophantine approximation}, Mosc. Math. J., 5:4  (2005),  747--766, 972.
\bibitem{CCC} J.W.S. Cassels:  \textit{An introduction to Diophantine approximations}, Cambridge Univ. Press, 1957.
\bibitem{Che} N. Chevallier: \textit{Best simultaneous Diophantine approximations and multidimensional continued fraction expansions}, Mosc. J. Comb. Number Theory, 3:1 (2013), 3--56.
\bibitem{David} T. Das, L. Fishman, D. Simmons, M. Urba\'nski: \textit{A variational principle in the parametric geometry of numbers},  Advances in Mathematics, 437 (2024), 109435.
\bibitem{Falconer} K. Falconer: \textit{Fractal geometry. Mathematical foundations and applications}. John Wiley \& Sons, Ltd., Chichester, 1990. xxii+288 pp.
\bibitem{Jar} V. Jarn{\'\i}k: \textit{Zum khintchineschen "{{\"U}}bertragungssatz"}. {Trav. Inst. Math. Tbilissi}, 3 (1938), 193--212.
\bibitem{MaMo22} A. Marnat, N. Moshchevitin: \textit{On geometry of simultaneous approximation to three real numbers}, Moscow Mathematical Journal 25:1 (2025) : 33 --61.
\bibitem{Szeged} N. Moshchevitin: \textit{Diophantine exponents for systems of linear forms in two variables}, Acta Sci. Math., 79 (2013), 347--367. 
\bibitem{Mo1} N. Moshchevitin: \textit{Khinchin's singular Diophantine systems and their applications}, Russian Math. Surveys, 65 (2010), 433--511.
\bibitem{Mo2} N. Moshchevitin: \textit{Best approximation vectors for completely irrational subspaces}, Colloquium Mathematicum , 175:1 (2024), 1-11.
\bibitem{NM} N. Moshchevitin, V. Neckrasov : \textit{Metric theory of inhomogeneous Diophantine approximations with a fixed matrix}, ArXiV Preprint https://arxiv.org/abs/2503.21180
\bibitem{Neck} V. Neckrasov:  \textit{On some properties of irrational subspaces}, Unif. Distrib. Theory, 17 (2022), 89--104.
\bibitem{Roy} D. Roy: \textit{On Schmidt and Summerer parametric geometry of numbers}, Annals of Mathematics, 182 (2015), 739--786.
\bibitem{Roy23} D. Roy: \textit{On simultaneous rational approximations to a real number, its square, and its cube}, Acta Arithmetica,133 (2008): 185--197. 
\bibitem{SSS} W. M. Schmidt: \textit{Diophantine Approximation}, Lecture Notes in Mathematics,
785 (1980).
\bibitem{Wgene} W. M. Schmidt: \textit{On Parametric Geometry of Numbers}, Acta Arithmetica, 195 (2020), 383--414.
\bibitem{S} W. M. Schmidt: \textit{On simultaneous Diophantine approximation}. Monatsh Math.,198:3 (2022), 641--650. 
\bibitem{SLum} W. M. Schmidt: \textit{Open problems in {D}iophantine approximation} (incollection) Diophantine approximations and transcendental numbers ({L}uminy, 1982), Birkh\"auser Boston, 1983.
\bibitem{SS1} W. M. Schmidt, L. Summerer: \textit{Parametric Geometry of Numbers and applications}. Acta Arithmetica, 140:1 (2009), 67--91.
\bibitem{SS2} W. M. Schmidt, L. Summerer: \textit{Diophantine Approximation and Parametric Geometry of Numbers}. Monatsh. Math., 169 (2013), 51--104.
\bibitem{Solan} O. Solan: \textit{Parametric Geometry of Numbers with General Flow}, ArXiV preprint https://arxiv.org/abs/2106.01707
\bibitem{L} L. Summerer: \textit{On a conjecture of N. Moshchevitin},  Mosc. J. Comb. Number Theory, 11:1 (2022), 115--124. 



\end{thebibliography}
\end{document}